\numberwithin{equation}{section}
\numberwithin{figure}{section}
\newcommand\qedsymbol{\hbox{$\Box$}}
\newcommand\qed{\relax\ifmmode\Box\else
  {\unskip\nobreak\hfil\penalty50\hskip1em\null\nobreak\hfil\qedsymbol
  \parfillskip=\z@\finalhyphendemerits=0\endgraf}\fi}
\newenvironment{proof}[1][{}]{\par\noindent Proof{#1}. }{\qed}
\newcommand{\bfzero}{{\bf 0}}
\newcommand{\ed}{{\bullet\hspace{-0.05cm}-\hspace{-0.05cm}\bullet}}
\newcommand{\lp}{{\circlearrowleft}}
\newcommand{\nl}{\not{\circlearrowright}}
\newcommand{\dGra}{{\mathsf{dGra}}}
\newcommand{\GC}{\mathsf{GC}}
\newcommand{\fGC}{\mathsf{fGC}}
\newcommand{\dfGC}{\mathsf{dfGC}}
\newcommand{\dfGCo}{\mathsf{dfGC}^{\oplus}}
\newcommand{\fGCo}{\mathsf{fGC}^{\oplus}}
\newcommand{\GCo}{\mathsf{GC}^{\oplus}}
\newcommand{\QCo}{\mathsf{QC}^{\oplus}}
\newcommand{\dgra}{\mathsf{dgra}}
\newcommand{\gra}{\mathsf{gra}}
\newcommand{\coCom}{{\mathsf{coCom}}}
\newcommand{\Conv}{{\mathrm{Conv}}}
\newcommand{\grVect}{{\mathsf{grVect}}}
\renewcommand{\c}{{\circ}}
\newcommand{\End}{\mathsf{End}}
\renewcommand{\span}{\mathsf{span}}
\newcommand{\Hom}{\mathrm{Hom}}
\newcommand{\Home}{\mathsf{Home}}
\newcommand{\Aut}{{\mathrm {Aut}}}
\newcommand{\Av}{\mathrm {Av}}
\newcommand{\red}{\mathrm{red}}
\newcommand{\Gr}{{\mathrm {Gr}}}
\renewcommand{\Im}{{\mathrm{Im}}}
\newcommand{\id}{{\mathsf{ i d} }}
\newcommand{\Sh}{\mathrm{Sh}}
\newcommand{\conn}{\mathrm{conn}}
\newcommand{\wh}[1]{{\widehat{#1}}}
\newcommand{\ti}[1]{{\tilde{#1}}}
\newcommand{\und}[1]{{\underline{#1}}}
\newcommand{\dia}{\diamond}
\newcommand{\hs}{\heartsuit}
\newcommand{\al}{{\alpha}}
\newcommand{\la}{{\lambda}}
\newcommand{\bul}{{\bullet}}
\newcommand{\mb}{{\mathfrak{b}}}
\newcommand{\mmp}{\mathfrak{p}}
\newcommand{\grt}{{\mathfrak{grt}}}
\newcommand{\si}{{\sigma}}
\newcommand{\ga}{{\gamma}}
\newcommand{\vf}{{\varphi}}
\newcommand{\ve}{{\varepsilon}}
\newcommand{\ka}{{\kappa}}
\newcommand{\G}{{\Gamma}}
\newcommand{\cF}{{\cal F}}
\newcommand{\pa}{{\partial}}
\newcommand{\bsi}{{\bf s}^{-1}\,}
\newcommand{\bs}{{\bf s}}
\newcommand{\cC}{{\mathcal C}}
\newcommand{\cG}{\mathcal{G}}
\newcommand{\cW}{\mathcal{W}}
\newcommand{\cV}{{\cal V}}
\newcommand{\bbK}{{\mathbb K}}
\newcommand{\bbZ}{{\mathbb Z}}
\newcommand{\bbQ}{{\mathbb Q}}
\newcommand{\La}{{\Lambda}}
\newcommand{\Ups}{{\Upsilon}}
\newcommand{\te}{\theta}
\newcommand{\de}{{\delta}}
\newcommand{\sgn}{\mathrm{sgn}}
\newcommand{\Frame}{{\mathsf{Frame}}}
\newcommand{\tensor}{\otimes}
\newcommand{\bbS}{\mathbb{S}}
\newcommand{\maps}{\colon}
\newcommand{\gray}[1]{\textcolor{gray}{#1}}
\newcommand{\Gim}{{\gimel}}
\date{}
\newtheorem{thm}{Theorem}[section]
\newtheorem{defi}[thm]{Definition}
\newtheorem{lem}[thm]{Lemma}
\newtheorem{cor}[thm]{Corollary}
\newtheorem{prop}[thm]{Proposition}
\newtheorem{claim}[thm]{Claim}
\newtheorem{cond}[thm]{Condition}
\newtheorem{example}[thm]{Example}
\newtheorem{remark}[thm]{Remark}
\title{The cohomology of the full directed graph complex}
\author{Vasily A. Dolgushev and Christopher L. Rogers}
\date{}
\begin{document}


\maketitle

\begin{abstract}
In his seminal paper ``Formality conjecture'', M. Kontsevich introduced a graph complex $\GC_{1ve}$
closely connected with the problem of constructing a formality quasi-isomorphism for Hochschild cochains.
In this paper, we express the cohomology of the full directed graph complex $\dfGC$ 
explicitly in terms of the cohomology of $\GC_{1ve}$. Applications of our results include a recent work 
by the first author which completely characterizes homotopy classes of formality quasi-isomorphisms 
for Hochschild cochains in the stable setting.
\end{abstract}


\tableofcontents

\section{Introduction}
Graph complexes provide us with a large supply of intriguing questions and conjectures\footnote{This list of references is 
far from complete.} \cite{Bar-Natan-GC}, \cite{C-Vogtmann},  
\cite{hairy-stuff}, \cite{slides},  \cite{notes}, \cite{Ham-Lazar}, \cite{Ham-Lazar1}, 
\cite{KWZ}, \cite{KWZ11}, \cite{K-conj}, 
\cite{K-noncom-symp}, \cite{Thomas-Ger-knots},
\cite{Thomas-slides}, \cite{Thomas}, \cite{WZ}. One source of the motivation for 
working with graph complexes comes from the study of embedding spaces \cite{A-Turchin}, \cite{Cattaneo}, 
\cite{K-M-Volic}, \cite{Sinha}, \cite{Thomas-Ger-knots}.  Another source \cite{Ham}, \cite{Igusa}, \cite{Sergei-Thomas} 
comes from the study of moduli spaces of smooth complex algebraic curves.  

This paper is devoted to the full directed graph complex $\dfGC$ which generalizes 
Kontsevich's graph complex $\GC_{1ve}$ from \cite[Section 5.2]{K-conj} and its study
is motivated by the fact that it ``acts on'' homotopy classes of stable formality 
quasi-isomorphisms \cite{stable}. More precisely, using the full directed graph complex $\dfGC$, 
one can describe all homotopy classes of formality quasi-isomorphisms for 
Hochschild cochains in the ``stable setting''. 

The graded vector space of $\dfGC$ is ``assembled from'' directed graphs (possibly with loops) 
with some additional data. $\dfGC$ is naturally a graded Lie algebra and the 
graph $\G_{\ed}$ with a single edge connecting two vertices is a Maurer-Cartan element 
in $\dfGC$. So the differential on $\dfGC$ is defined as the adjoint action of $\G_{\ed}$.  

It is easy to see that the graph $\G_{\lp}$ which consists of the single loop and 
the polygon\footnote{To get a vector in $\dfGC$ from the undirected graph $\G^{\dia}_{4m+1}$, 
we have to choose a total order on the set of edges and take the sum of directed graphs which are 
obtained from $\G^{\dia}_{4m+1}$ by choosing all possible directions on edges.} 
$\G^{\dia}_{4m+1}$ with $4m+1$ edges (for $m \ge 1$) shown in figure \ref{fig:4m1-intro}
are non-trivial cocycles in $\dfGC$. 
\begin{figure}[htp]
\centering 
\begin{tikzpicture}[scale=0.5, >=stealth']
\tikzstyle{w}=[circle, draw, minimum size=4, inner sep=1]
\tikzstyle{b}=[circle, draw, fill, minimum size=4, inner sep=1]
\node [b] (b1) at (4,0) {};
\draw (4.4,0) node[anchor=center, color=gray] {{\small $5$}};
\node [b] (b2) at (3.54,-1.86) {};
\draw (4,-1.86) node[anchor=center, color=gray] {{\small $6$}};
\node [b] (b3) at (2.27,-3.29) {};
\draw (2.6,-3.7) node[anchor=center, color=gray] {{\small $7$}};
\node [b] (b4) at (0.48,-3.97) {};
\draw (0.48,-4.5) node[anchor=center, color=gray] {{\small $8$}};
\node [b] (b5) at (-1.42,-3.74) {};
\draw (-1.5, -4.3) node[anchor=center, color=gray] {{\small $9$}};
\node [b] (b6) at (-2.99,-2.65) {};
\draw (-3.3,-3.2) node[anchor=center, color=gray] {{\small $10$}};
\node [b] (b7) at (-3.88,-0.96) {};
\draw (-4.5,-0.96) node[anchor=center, color=gray] {{\small $11$}};
\node [b] (b9) at (-2.99,2.65) {};
\draw (-4,3.2) node[anchor=center, color=gray] {{\small $4m+1$}};
\node [b] (b10) at (-1.42,3.74) {};
\draw (-1.42,4.3) node[anchor=center, color=gray] {{\small $1$}};
\node [b] (b11) at (0.48,3.97) {};
\draw (0.48,4.6) node[anchor=center, color=gray] {{\small $2$}};
\node [b] (b12) at (2.27,3.29) {};
\draw (2.27,3.9) node[anchor=center, color=gray] {{\small $3$}};
\node [b] (b13) at (3.54,1.86) {};
\draw (4,1.86) node[anchor=center, color=gray] {{\small $4$}};
\draw (-4,0.2) node[anchor=center] {{\large $\vdots$}};
\draw  (b1) edge (b2);
\draw  (b2) edge (b3);
\draw  (b3) edge (b4);
\draw  (b4) edge (b5);
\draw  (b5) edge (b6);
\draw  (b6) edge (b7);
\draw  (b9) edge (b10);
\draw (b10) edge (b11);
\draw (b11) edge (b12);
\draw (b12) edge (b13);
\draw (b13) edge (b1);
\end{tikzpicture}
\caption{The graph $\G^{\dia}_{4m + 1}$} \label{fig:4m1-intro}
\end{figure}

There is an obvious embedding 
$$
\GC_{1ve} \hookrightarrow \dfGC
$$
which upgrades to the following map of cochain complexes:
\begin{equation}
\label{main-map}
\Psi :  \bs^{-2} \wh{S} \big( \,\bs^2 \GC_{1ve}  ~ \oplus ~ \bigoplus_{m \ge 0} \bs^2 \bbK v_{4m-1}   \, \big) 
~~\to~~ \dfGC,  
\end{equation}
where $\bs$ (resp. $\bs^{-1}$) denotes the operator which shifts the degree up by $1$
(resp. down by $1$), $\wh{S}$ is the completed (and truncated) symmetric algebra 
$$
\wh{S}(V) : = \prod_{n \ge 1} S^n (V),
$$
$v_{4m-1}$ is a vector of degree $4m-1$ which gets mapped to the cocycle $\G^{\dia}_{4m + 1}$ in $\dfGC$ 
if $m \ge 1$ and $v_{-1}$ is a vector of degree $-1$ which gets mapped to $\G_{\lp}$. 

It turns out that the map $\Psi$ is a quasi-isomorphism of cochain complexes and the 
goal of this paper is to give a careful proof of this statement\footnote{In this paper, we assume that $\bbK$ is 
a field of characteristic zero.} (see Theorem \ref{thm:main}). Using this statement, we 
deduce the following corollary
\begin{cor}
\label{cor:H-dfGC-intro}
For the full directed graph complex $\dfGC$, we have
$$
H^{\bul} (\dfGC) \cong  \bs^{-2} \wh{S} \big(\, \bs^2 H^{\bul}(\GC_{1ve}) ~ \oplus ~  
\bigoplus_{m \ge 0} \bs^{4m+1} \bbK  \,\big).
$$
\end{cor}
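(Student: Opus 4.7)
The plan is to derive the corollary from Theorem \ref{thm:main} by a purely formal argument; the real work was already done in proving that $\Psi$ is a quasi-isomorphism. Since $\Psi$ is a quasi-isomorphism, it suffices to compute the cohomology of its source. Write
$$
V := \bs^2 \GC_{1ve} \,\oplus\, \bigoplus_{m\ge 0} \bs^2 \bbK v_{4m-1},
$$
so that the source of $\Psi$ is $\bs^{-2} \wh{S}(V)$. The overall shift $\bs^{-2}$ commutes with taking cohomology, so we may concentrate on $\wh{S}(V)$.

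The differential on $\wh{S}(V)$ is the unique derivation extending the given differential on $V$ (with each $v_{4m-1}$ declared to be a cocycle). Any such derivation preserves the word-length grading, so
$$
\wh{S}(V) \;=\; \prod_{n\ge 1} S^n(V)
$$
is a direct product of subcomplexes. Cohomology commutes with arbitrary direct products of cochain complexes, which gives
$$
H^{\bul}\bigl(\wh{S}(V)\bigr) \;=\; \prod_{n\ge 1} H^{\bul}\bigl(S^n(V)\bigr).
$$
The one nontrivial ingredient is the identification $H^{\bul}(S^n(V)) \cong S^n\bigl(H^{\bul}(V)\bigr)$. This comes from the K\"unneth isomorphism $H^{\bul}(V^{\otimes n}) \cong H^{\bul}(V)^{\otimes n}$ (valid because $\bbK$ is a field) combined with the fact that the functor of $\Sigma_n$-invariants is exact in characteristic zero, since the group algebra $\bbK[\Sigma_n]$ is semisimple. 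Applying this termwise is where a tiny bit of care is needed, but it presents no real obstacle.

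To conclude, one substitutes $H^{\bul}(V) = \bs^2 H^{\bul}(\GC_{1ve}) \,\oplus\, \bigoplus_{m\ge 0} \bs^2 \bbK v_{4m-1}$ and notes that $\bs^2 \bbK v_{4m-1}$ is one-dimensional in degree $4m+1$, i.e.\ $\bs^{4m+1} \bbK$. Reinstating the outer shift $\bs^{-2}$ produces the claimed formula. Since Theorem \ref{thm:main} does the heavy lifting, the corollary is essentially bookkeeping; the only step that uses nontrivial hypotheses is the interchange of $H^{\bul}$ with $S^n$, which depends on $\bbK$ having characteristic zero.
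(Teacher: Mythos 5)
Your argument is correct and is essentially the paper's own derivation: the authors likewise obtain this corollary from Theorem \ref{thm:main} by combining the quasi-isomorphism $\Psi$ with the K\"unneth theorem and the fact that, in characteristic zero, $H^{\bul}$ commutes with invariants/coinvariants under a finite group action (here $\bbS_n$ acting on $V^{\otimes n}$), together with the compatibility of cohomology with the word-length direct product decomposition of $\wh{S}(V)$. Nothing further is needed.
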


Combining Corollary \ref{cor:H-dfGC-intro} with \cite[Theorem 1.1]{Thomas}, we conclude that
\begin{cor}
\label{cor:dfGC-grt}
$$
H^0(\dfGC) \cong \grt_1\,,  \qquad H^{-1}(\dfGC) \cong \bbK [\G_{\lp}]\,, \qquad \textrm{and}
\qquad  H^{\le -2}(\dfGC) = \bfzero\,,
$$
where $\grt_1$ is the Grothendieck-Teichmueller Lie algebra 
introduced by V. Drinfeld in \cite[Section 6]{Drinfeld} and $[\G_{\lp}]$ is the cohomology 
class of the graph $\G_{\lp}$ which consists of a single loop. 
\end{cor}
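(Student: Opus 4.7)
The plan is to substitute Thomas's theorem \cite[Theorem 1.1]{Thomas}, which identifies $H^0(\GC_{1ve}) \cong \grt_1$ and $H^{<0}(\GC_{1ve}) = \bfzero$, directly into the formula of Corollary \ref{cor:H-dfGC-intro}. Once this is done, the remaining task is a degree count inside the graded symmetric algebra $\wh{S}(W)$, where
$$W \;:=\; \bs^2 H^{\bul}(\GC_{1ve}) \,\oplus\, \bigoplus_{m\ge 0} \bs^{4m+1}\bbK,$$
combined with the outer shift by $\bs^{-2}$.

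The key observation is that every homogeneous generator of $W$ sits in degree $\ge 1$: by Thomas, the first summand is concentrated in degrees $\ge 2$, while the second summand lives in degrees $1,\,5,\,9,\,\dots$. Consequently every nonzero monomial in $S^n(W)$ has degree $\ge n \ge 1$, so $\wh{S}(W)$ is concentrated in degrees $\ge 1$. Applying $\bs^{-2}$ immediately yields $H^{\le -2}(\dfGC) = \bfzero$. In total degree $1$ of $\wh{S}(W)$, no $S^n$ with $n \ge 2$ can contribute, and the unique degree-$1$ generator of $W$ is the $m=0$ summand $\bs\bbK$; this element gets sent by the quasi-isomorphism $\Psi$ to the loop $\G_{\lp}$, giving $H^{-1}(\dfGC) \cong \bbK[\G_{\lp}]$.

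For $H^0(\dfGC)$ I would analyse total degree $2$ inside $\wh{S}(W)$. Length-$n$ monomials with $n \ge 3$ are again excluded on degree grounds, and $S^1(W)$ in degree $2$ contributes exactly $\bs^2 H^0(\GC_{1ve}) \cong \bs^2 \grt_1$. The only step that requires a moment of thought is ruling out $S^2(W)$ in degree $2$: the only way to write $2 = 1+1$ using $W$-generators is to take two copies of the odd-degree element of $\bs\bbK$, and odd-degree elements square to zero in a graded symmetric algebra. Hence only $\bs^2 \grt_1$ survives in degree $2$, and the outer $\bs^{-2}$ produces $H^0(\dfGC) \cong \grt_1$. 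The whole argument is essentially bookkeeping once Corollary \ref{cor:H-dfGC-intro} is in hand; the only subtle point is this odd-square vanishing, which is precisely what prevents the class $[\G_{\lp}]^2$ from contaminating $H^0$.
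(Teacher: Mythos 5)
Your proposal is correct and is exactly the route the paper takes: substitute Willwacher's computation of $H^{\bul}(\GC_{1ve})$ into Corollary \ref{cor:H-dfGC-intro} and read off the degrees $-1$, $0$, and $\le -2$ pieces of $\bs^{-2}\wh{S}(W)$, the paper merely leaving this bookkeeping implicit. You also correctly isolate the one genuinely delicate point, namely that the degree-$1$ generator $\bs v_{-1}$ is odd and therefore squares to zero in the graded symmetric algebra, so $[\G_{\lp}]^2$ cannot contribute to $H^0$.
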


\subsection{Organization of the paper}
The material of the paper is presented in a way which requires a minimal knowledge of 
prerequisites. For example, a reader, who is unfamiliar with algebraic operads \cite{LV-book}, can easily 
understand most of this paper. 

In Section \ref{sec:dfGC}, we introduce the full directed graph complex $\dfGC$, 
its ``uncompleted'' version $\dfGCo$, its undirected version $\fGC$ and 
the important subcomplexes $\GC_{1ve} \subset \GC \subset \fGC$. 
At the end of this section, we recall the interpretation of $\dfGC$ in terms of 
the convolution Lie algebra \cite[Section 4]{notes}. 

In Section \ref{sec:thm-proof}, we formulate the main result of this paper 
(see Theorem \ref{thm:main}) and its variant for the ``uncompleted'' version $\dfGCo$ of $\dfGC$
(see Theorem \ref{thm:dfGCo}). We deduce Theorem \ref{thm:main} from  Theorem \ref{thm:dfGCo} 
and prove the version of Theorem \ref{thm:main} for the subcomplex $\dfGC^{\nl} \subset \dfGC$
which is ``assembled from'' directed graphs without loops (see Proposition \ref{prop:dfGC-nl-dfGC}). 
It is Proposition \ref{prop:dfGC-nl-dfGC} which is used in paper \cite{stable} to describe homotopy 
classes of formality quasi-isomorphisms for Hochschild cochains in the ``stable setting''.  

In Section \ref{sec:thm-proof}, we also recall the version $\dfGC_d$ of the full directed graph complex 
for an arbitrary \emph{even} integer $d$ ($\dfGC : = \dfGC_2$) and generalize Theorem \ref{thm:main}
to the case of arbitrary even $d$. 

The proof of Theorem \ref{thm:dfGCo} is broken into several parts (see Subsection \ref{sec:outline} for 
the outline of the proof) and the two major parts are presented in Sections \ref{sec:dfGCo-conn-ge3} and 
\ref{sec:GC1ve-to-GC}, respectively. 

Section \ref{sec:dfGCo-conn-ge3} is devoted to the subcomplex $\dfGCo_{\conn, \ge 3}$ which is 
spanned by connected graphs with at least one vertex having valency $\ge 3$. In this section, we prove that 
the natural embedding $\GCo \hookrightarrow \dfGCo_{\conn, \ge 3}$ is a quasi-isomorphism of cochain complexes. 

In Section \ref{sec:GC1ve-to-GC}, we prove that the natural embedding $\GCo_{1ve}  \hookrightarrow \GCo$ 
is also a quasi-isomorphism of cochain complexes. This statement plays an important role in the proof of Willwacher's theorem 
\cite[Theorem 1.1]{Thomas} which links the zeroth cohomology of $\GC$ to
the Grothendieck-Teichmueller Lie algebra $\grt_1$ \cite[Section 4.2]{AT}, \cite[Section 6]{Drinfeld}. 
For this reason, we decided to write a careful proof of this statement. 

The easier parts of the proof of Theorem \ref{thm:dfGCo} are presented in Appendix \ref{app:polygons-paths}.
Finally, Appendix \ref{app:cohomol-T-V2} is devoted to auxiliary statements which are used in Section 
\ref{sec:dfGCo-conn-ge3} and in Appendix \ref{app:polygons-paths}.

\subsection{Notational conventions}
In this paper, the ground field $\bbK$ has characteristic zero and 
$\grVect_{\bbK}$ denotes the category of $\bbZ$-graded 
$\bbK$-vector spaces. 
The notation $\bs$ (resp. $\bsi$) is reserved for the operator 
which shifts the degree up by $1$ (resp. down by $1$), i.e.
$(\bs\, V)^{\bul} : = V^{\bul -1}$ and $(\bsi V)^{\bul} : = V^{\bul +1}\,.$
For a set $X$, we denote by $\span_{\bbK}(X)$ the $\bbK$-vector 
space of finite $\bbK$-linear combinations of elements in $X$. 

The notation $\bbS_n$ is reserved for the symmetric group on $n$ 
letters and  $\Sh_{p_1, \dots, p_k}$ denotes 
the subset of $(p_1, \dots, p_k)$-shuffles 
in $\bbS_{p_1 + \dots + p_k}$, i.e.  $\Sh_{p_1, \dots, p_k}$ consists of 
elements $\si \in \bbS_{p_1 + \dots + p_k}$, such that 
$\si(1) <  \si(2) < \dots < \si(p_1),$ $\si(p_1+1) <  \si(p_1 +2) < \dots < \si(p_1+p_2),$ $\dots$,
$\si(n-p_k+1) <  \si(n-p_k+2) < \dots < \si(n)$.
For a group $G$ acting on a vector space $V$, we denote by $V^G$ (resp. $V_G$)
the space of invariants (resp. the space of coinvariants).  

For a graded vector space (or a cochain complex) $V$, the notation $S(V)$
(resp. $\und{S}(V)$, resp. $\wh{S}(V)$) is reserved for the space of the symmetric 
algebra (resp. the truncated symmetric algebra, resp. the completed and truncated 
symmetric algebra). Namely, 
$$
S(V) : = \bbK \oplus V \oplus S^2(V) \oplus S^3(V) \oplus \dots, 
\qquad 
\und{S}(V) : = V \oplus S^2(V) \oplus S^3(V) \oplus \dots, 
$$
and 
$$
\wh{S}(V) : = \prod_{n \ge 1} S^n(V)\,,  \qquad
S^n(V) : = (V^{\otimes\, n})_{\bbS_n} \,.
$$

$\coCom$ denotes the cooperad (in the category of $\bbK$-vector spaces) which governs 
cocommutative (and coassociative) coalgebras without counit. In other words, 
$\coCom(n)$ is the trivial representation $\bbK$ of $\bbS_n$ for every $n \ge 1$ and $\coCom(0) = \bfzero$. 
The notation $\La$ is reserved for the underlying collection of the endomorphism operad $\End_{\bs \bbK}$. 
It is known that $\La$ is naturally a cooperad (in the category $\grVect_{\bbK}$). So, for a cooperad 
$\cC$  (in the category $\grVect_{\bbK}$) and an integer $m$, we denote by  $\La^m \cC$ the cooperad 
$\La^{\otimes m} \cC$. For example, $\La^2 \coCom(n)$ is the trivial representation of $\bbS_n$ placed 
in degree $2-2n$, if $n \ge 1$ and $\La^2 \coCom(0) = \bfzero$.

By a graph $\G$ we typically mean an undirected graph with a finite set 
of vertices $V(\G)$ and a finite set of edges $E(\G)$. Multiple edges as well as 
loops (i.e. cycles of length $1$) are allowed. A graph $\G$ is directed, if each 
edge of $\G$ is equipped with a direction. A {\it forest} is a finite (undirected) graph $F$ 
without cycles. A {\it tree} is a connected forest. 

For a set $X$ with $r < \infty$ elements we tacitly identify total orders 
on $X$ with labelings of elements of $X$ by numbers $\{1,2, \dots, r\}$.   

\vspace{0.3cm}

\noindent
\textbf{Acknowledgements:} We would like to thank Thomas Willwacher 
for several suggestions concerning various drafts of this paper.   
While this paper was in preparation, C.L.R. visited Philadelphia several 
times and he is thankful to Temple University for hospitality.
V.A.D. acknowledges NSF grants DMS-1161867 and DMS-1501001 
for a partial support. C.L.R. acknowledges the Simons Foundation for 
the partial support through an AMS-Simons Travel Grant.

\section{The graph complexes $\dfGC^{\oplus}$, $\dfGC$ and their modifications}
\label{sec:dfGC}

For a pair of integers $(n, r)$ with $n \ge 1$ and $r \ge 0$, the set $\dgra^r_{n}$ 
consists of directed graphs $\G$ with the set of vertices 
$V(\G) = \{\gray{1,2, \dots, n}\} $ and the set of edges $E(\G) = \{\und{1}, \und{2}, \dots, \und{r}\}$.
An example of an element $\G$ in $\dgra^5_4$ is shown in 
figure\footnote{In figures, the labels for vertices are shown in gray color and the 
labels for edges are underlined numbers.} \ref{fig:exam}. 
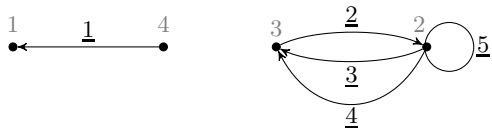
\begin{figure}[htp] 
\centering 
\begin{tikzpicture}[scale=0.5, >=stealth']
\tikzstyle{w}=[circle, draw, minimum size=3, inner sep=1]
\tikzstyle{b}=[circle, draw, fill, minimum size=3, inner sep=1]
\node [b] (b1) at (-5,0) {};
\draw (-5,0.6) node[anchor=center, color=gray] {{\small $1$}};
\node [b] (b3) at (2,0) {};
\draw (2, 0.5) node[anchor=center, color=gray] {{\small $3$}};
\node [b] (b2) at (6,0) {};
\draw (5.8, 0.6) node[anchor=center, color=gray] {{\small $2$}};
\node [b] (b4) at (-1,0) {};
\draw (-1, 0.6) node[anchor=center, color=gray] {{\small $4$}};
\draw [->] (b4) edge (b1);
\draw (-3, 0.4) node[anchor=center] {{\small $\und{1}$}};
\draw [->] (b3) ..controls (3,0.5) and (5,0.5) .. (b2);
\draw (4,0.8) node[anchor=center] {{\small $\und{2}$}}; 
\draw [<-] (b3) ..controls (3,-0.5) and (5,-0.5) .. (b2);
\draw (4,-0.8) node[anchor=center] {{\small $\und{3}$}}; 
\draw [<-] (b3) ..controls (3,-2) and (5,-2) .. (b2);
\draw (4,-1.9) node[anchor=center] {{\small $\und{4}$}}; 
\draw (6.6,0) circle (0.65);
\draw (7.5,0) node[anchor=center] {{\small $\und{5}$}}; 
\end{tikzpicture}
\caption{An example of a directed graph} \label{fig:exam}
\end{figure}

The set $\dgra_{n}^r$ is equipped with the obvious action of the group $\bbS_n \times \bbS_r$. 
Using this action we define the graded vector space 
\begin{equation}
\label{dfGC-oplus}
\dfGC^{\oplus} ~ : =~ \bigoplus_{n \ge 1,~r \ge 0} ~  \big( \bs^{2 n - 2 -r }  
\span_{\bbK}(\dgra_{n}^r) \otimes  \sgn_{r} \big)_{\bbS_n \times \bbS_r } \,,
\end{equation}
where $\sgn_r$ denotes the sign representation of $\bbS_r$.  

Since we take coinvariants with respect to the action of $\bbS_n$ on $\span_{\bbK}(\dgra_{n}^r)$, 
the labels on vertices do not play any essential role. On the other hand, the presence of the 
sign representation of $\bbS_r$ tells us that the change in the labels on edges results in 
a sign factor.  For example, in $\dfGC^{\oplus}$, we have
\begin{equation}
\label{pentagon}
\begin{tikzpicture}[scale=0.5, >=stealth']
\tikzstyle{w}=[circle, draw, minimum size=3, inner sep=1]
\tikzstyle{b}=[circle, draw, fill, minimum size=3, inner sep=1]
\node [b] (b1) at (2,0) {};
\draw (2.5,0) node[anchor=center, color =gray] {{\small $1$}};
\node [b] (b2) at (0.62, 1.90) {};
\draw (0.77, 2.38) node[anchor=center, color=gray] {{\small $2$}};
\node [b] (b3) at (-1.62, 1.18) {};
\draw (-2.02, 1.47) node[anchor=center, color=gray] {{\small $3$}};
\node [b] (b4) at (-1.62, -1.18) {};
\draw (-2.02, -1.47) node[anchor=center, color=gray] {{\small $4$}};
\node [b] (b5) at (0.62, -1.90) {};
\draw (0.77, -2.38) node[anchor=center, color=gray] {{\small $5$}};
\draw (1.62, 1.18) node[anchor=center] {{\small $\und{1}$}};
\draw (-0.62, 1.90) node[anchor=center] {{\small $\und{2}$}};
\draw (-2, 0) node[anchor=center] {{\small $\und{3}$}};
\draw (-0.62, -1.90) node[anchor=center] {{\small $\und{4}$}};
\draw (1.62, -1.18) node[anchor=center] {{\small $\und{5}$}};
\draw [->] (b1) edge (b2) (b2) edge (b3) 
(b3) edge (b4) (b4) edge (b5) (b5) edge (b1);
\draw (4,0) node[anchor=center] {{$ = $}};
\begin{scope}[shift={(8,0)}]
\node [b] (b1) at (2,0) {};
\draw (2.5,0) node[anchor=center, color =gray] {{\small $2$}};
\node [b] (b2) at (0.62, 1.90) {};
\draw (0.77, 2.38) node[anchor=center, color=gray] {{\small $1$}};
\node [b] (b3) at (-1.62, 1.18) {};
\draw (-2.02, 1.47) node[anchor=center, color=gray] {{\small $3$}};
\node [b] (b4) at (-1.62, -1.18) {};
\draw (-2.02, -1.47) node[anchor=center, color=gray] {{\small $4$}};
\node [b] (b5) at (0.62, -1.90) {};
\draw (0.77, -2.38) node[anchor=center, color=gray] {{\small $5$}};
\draw (1.62, 1.18) node[anchor=center] {{\small $\und{1}$}};
\draw (-0.62, 1.90) node[anchor=center] {{\small $\und{2}$}};
\draw (-2, 0) node[anchor=center] {{\small $\und{3}$}};
\draw (-0.62, -1.90) node[anchor=center] {{\small $\und{4}$}};
\draw (1.62, -1.18) node[anchor=center] {{\small $\und{5}$}};
\draw [->] (b1) edge (b2) (b2) edge (b3) 
(b3) edge (b4) (b4) edge (b5) (b5) edge (b1);
\end{scope}
\draw (12.5,0) node[anchor=center] {{$ =  ~ -$}};
\begin{scope}[shift={(16.5,0)}]
\node [b] (b1) at (2,0) {};
\draw (2.5,0) node[anchor=center, color =gray] {{\small $1$}};
\node [b] (b2) at (0.62, 1.90) {};
\draw (0.77, 2.38) node[anchor=center, color=gray] {{\small $2$}};
\node [b] (b3) at (-1.62, 1.18) {};
\draw (-2.02, 1.47) node[anchor=center, color=gray] {{\small $3$}};
\node [b] (b4) at (-1.62, -1.18) {};
\draw (-2.02, -1.47) node[anchor=center, color=gray] {{\small $4$}};
\node [b] (b5) at (0.62, -1.90) {};
\draw (0.77, -2.38) node[anchor=center, color=gray] {{\small $5$}};
\draw (1.62, 1.18) node[anchor=center] {{\small $\und{2}$}};
\draw (-0.62, 1.90) node[anchor=center] {{\small $\und{1}$}};
\draw (-2, 0) node[anchor=center] {{\small $\und{3}$}};
\draw (-0.62, -1.90) node[anchor=center] {{\small $\und{4}$}};
\draw (1.62, -1.18) node[anchor=center] {{\small $\und{5}$}};
\draw [->] (b1) edge (b2) (b2) edge (b3) 
(b3) edge (b4) (b4) edge (b5) (b5) edge (b1);
\end{scope}
\draw (20.5,0) node[anchor=center] {{$ = $}};
\begin{scope}[shift={(24.5,0)}]
\node [b] (b1) at (2,0) {};
\draw (2.5,0) node[anchor=center, color =gray] {{\small $1$}};
\node [b] (b2) at (0.62, 1.90) {};
\draw (0.77, 2.38) node[anchor=center, color=gray] {{\small $2$}};
\node [b] (b3) at (-1.62, 1.18) {};
\draw (-2.02, 1.47) node[anchor=center, color=gray] {{\small $3$}};
\node [b] (b4) at (-1.62, -1.18) {};
\draw (-2.02, -1.47) node[anchor=center, color=gray] {{\small $4$}};
\node [b] (b5) at (0.62, -1.90) {};
\draw (0.77, -2.38) node[anchor=center, color=gray] {{\small $5$}};
\draw (1.62, 1.18) node[anchor=center] {{\small $\und{2}$}};
\draw (-0.62, 1.90) node[anchor=center] {{\small $\und{3}$}};
\draw (-2, 0) node[anchor=center] {{\small $\und{4}$}};
\draw (-0.62, -1.90) node[anchor=center] {{\small $\und{5}$}};
\draw (1.62, -1.18) node[anchor=center] {{\small $\und{1}$}};
\draw [->] (b1) edge (b2) (b2) edge (b3) 
(b3) edge (b4) (b4) edge (b5) (b5) edge (b1);
\end{scope}
\end{tikzpicture}
\end{equation}

It is clear that a graph $\G \in \dgra_n^r$ gives us the zero vector in $\dfGCo$ if and only if 
it has an automorphism which induces an odd permutation in $\bbS_r$. For example,
\begin{equation}
\label{square}
\begin{tikzpicture}[scale=0.5, >=stealth']
\tikzstyle{w}=[circle, draw, minimum size=3, inner sep=1]
\tikzstyle{b}=[circle, draw, fill, minimum size=3, inner sep=1]
\draw (-2.5,1) node[anchor=center] {{$ \G ~ =  $}};
\node [b] (b1) at (2,0) {};
\draw (2.3,-0.3) node[anchor=center, color =gray] {{\small $1$}};
\node [b] (b2) at (2, 2) {};
\draw (2.3, 2.3) node[anchor=center, color=gray] {{\small $2$}};
\node [b] (b3) at (0, 2) {};
\draw (-0.3, 2.3) node[anchor=center, color=gray] {{\small $3$}};
\node [b] (b4) at (0, 0) {};
\draw (-0.3, -0.3) node[anchor=center, color=gray] {{\small $4$}};
\draw (2.4, 1) node[anchor=center] {{\small $\und{1}$}};
\draw (1, 2.4) node[anchor=center] {{\small $\und{2}$}};
\draw (-0.4,  1) node[anchor=center] {{\small $\und{3}$}};
\draw (1, -0.4) node[anchor=center] {{\small $\und{4}$}};
\draw [->] (b1) edge (b2) (b2) edge (b3) 
(b3) edge (b4) (b4) edge (b1);
\draw (4.5,1) node[anchor=center] {{$ = ~ - $}};
\begin{scope}[shift={(7,0)}]
\node [b] (b1) at (2,0) {};
\draw (2.3,-0.3) node[anchor=center, color =gray] {{\small $1$}};
\node [b] (b2) at (2, 2) {};
\draw (2.3, 2.3) node[anchor=center, color=gray] {{\small $2$}};
\node [b] (b3) at (0, 2) {};
\draw (-0.3, 2.3) node[anchor=center, color=gray] {{\small $3$}};
\node [b] (b4) at (0, 0) {};
\draw (-0.3, -0.3) node[anchor=center, color=gray] {{\small $4$}};
\draw (2.4, 1) node[anchor=center] {{\small $\und{2}$}};
\draw (1, 2.4) node[anchor=center] {{\small $\und{3}$}};
\draw (-0.4,  1) node[anchor=center] {{\small $\und{4}$}};
\draw (1, -0.4) node[anchor=center] {{\small $\und{1}$}};
\draw [->] (b1) edge (b2) (b2) edge (b3) (b3) edge (b4) (b4) edge (b1);
\end{scope}
\draw (11.5,1) node[anchor=center] {{$ = ~ - $}};
\begin{scope}[shift={(14,0)}]
\node [b] (b1) at (2,0) {};
\draw (2.3,-0.3) node[anchor=center, color =gray] {{\small $2$}};
\node [b] (b2) at (2, 2) {};
\draw (2.3, 2.3) node[anchor=center, color=gray] {{\small $3$}};
\node [b] (b3) at (0, 2) {};
\draw (-0.3, 2.3) node[anchor=center, color=gray] {{\small $4$}};
\node [b] (b4) at (0, 0) {};
\draw (-0.3, -0.3) node[anchor=center, color=gray] {{\small $1$}};
\draw (2.4, 1) node[anchor=center] {{\small $\und{2}$}};
\draw (1, 2.4) node[anchor=center] {{\small $\und{3}$}};
\draw (-0.4,  1) node[anchor=center] {{\small $\und{4}$}};
\draw (1, -0.4) node[anchor=center] {{\small $\und{1}$}};
\draw [->] (b1) edge (b2) (b2) edge (b3) (b3) edge (b4) (b4) edge (b1);
\end{scope}
\draw (19,1) node[anchor=center] {{$  = ~ - ~ \G.  $}};
\end{tikzpicture}
\end{equation} 
So $\G = 0$ in $\dfGC^{\oplus}$\,.

\begin{defi}
\label{dfn:even-odd}
We will say that a graph $\G \in \dgra_n^r$ is \emph{odd} if it has 
an automorphism which induces an odd permutation in $\bbS_r$. 
Otherwise, we say that $\G$ is \emph{even}. 
\end{defi}
For example, the pentagon shown in \eqref{pentagon} is even and 
the square shown in \eqref{square} is odd. 

It is clear that $\dfGCo$ is the span of isomorphism classes of 
all even graphs\footnote{Note that an even graph $\G$ without labels on edges determines 
a vector in $\dfGCo$ only up to a sign factor. To specify the sign factor, one has to
fix a total order on $E(\G)$ up to any even permutation in $\bbS_{E(\G)}$.}
with the rule that an even graph $\G \in \dgra_n^r$ carries the degree $2n - 2 - r$. 

It is easy to see that every graph which has multiple edges with the same 
direction is odd. Thus, the graph shown in figure \ref{fig:exam} is odd. Hence it 
corresponds to the zero vector in $\dfGCo$. 
On the other hand, there are even graphs with double edges
of opposite directions. An example of such a graph is shown in figure 
\ref{fig:funny-even}.
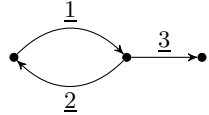
\begin{figure}[htp] 
\centering 
\begin{tikzpicture}[scale=0.5, >=stealth']
\tikzstyle{w}=[circle, draw, minimum size=3, inner sep=1]
\tikzstyle{b}=[circle, draw, fill, minimum size=3, inner sep=1]
\node [b] (b1) at (0,0) {};
\node [b] (b2) at (3,0) {};
\node [b] (b3) at (5,0) {};
\draw [->] (b1) ..controls (1,1) and (2,1) .. (b2);
\draw (1.5, 1.2) node[anchor=center] {{\small $\und{1}$}};
\draw [->] (b2) ..controls (2,-1) and (1,-1) .. (b1);
\draw (1.5, -1.2) node[anchor=center] {{\small $\und{2}$}};
\draw [->] (b2) edge (b3);
\draw (4, 0.4) node[anchor=center] {{\small $\und{3}$}};
\end{tikzpicture}
\caption{An even graph with double edges of opposite directions.
Labels for vertices are not shown} \label{fig:funny-even}
\end{figure}
%
%

\begin{defi}
\label{dfn:concordant}
Let  $\G, \ti{\G}$ be two even elements of $ \dgra_n^r$ for which there exists an isomorphism 
of directed graphs $\vf : \G \to \ti{\G}$. We say that $\G$ and $\ti{\G}$ are \emph{concordant} (resp. \emph{opposite}) 
if $\ti{\G} = \G$ (resp. $\ti{\G} = -\G$) in $\dfGCo$.
\end{defi}
\begin{remark}
\label{rem:sign-si-vf}
Since edges of $\G$ and $\ti{\G}$ are labeled, any isomorphism (of directed graphs) $\vf: \G \to \ti{\G}$ 
gives us a bijection $\{\und{1}, \dots, \und{r}\} \to \{\und{1}, \dots, \und{r}\}$
and hence an element $\si_{\vf} \in \bbS_r$. 
Moreover, since $\G$ (and hence $\ti{\G}$) is even, 
the sign of the permutation $\si_{\vf}$ does not depend on the choice of the isomorphism $\vf$. 
So  $\G$ and $\ti{\G}$ are concordant (resp. opposite) if and only if the permutation 
$\si_{\vf}$ is even (resp. odd).  
\end{remark}

For $\G \in \dgra_n^r$,  $\ti{\G} \in \dgra_m^q$, and $1 \le i \le n$, we denote by
$\G \circ_i \ti{\G}$ the vector in $\span_{\bbK}(\dgra_{m+n-1}^{r+q})$ which is obtained by following these steps: 

\begin{itemize}

\item we shift the labels $i+1, i+2, \dots, n$ on vertices of $\G$ up by $m-1$;

\item we shift all the labels on vertices of $\ti{\G}$ up by $i-1$;

\item we shift all the labels on edges of $\ti{\G}$ up by $r$;

\item finally, we replace vertex $i$ of $\G$ by the graph $\ti{\G}$ (with the new labels)
and sum over all possible ways of attaching the resulting free edges to vertices of
$\ti{\G}$ (with the new labels). 

\end{itemize}

Although $\G \circ_i \ti{\G}$ is defined as a vector in $\span_{\bbK}(\dgra_{m+n-1}^{r+q})$, 
we will often use the same notation $\G \circ_i \ti{\G}$ for its image in the space $\dfGCo$
of coinvariants. For example, a computation of $\G \circ_2 \ti{\G} \in \span_{\bbK}(\dgra_{4}^{4}) $ for two concrete graphs
$\G$ and $\ti{\G}$ is shown in figure \ref{fig:ins} and the image of $\G \circ_2 \ti{\G}$ in $\dfGCo$ is shown in 
figure \ref{fig:ins-dfGC}. This image has only three terms because the third graph in the second line of 
figure \ref{fig:ins} is odd.
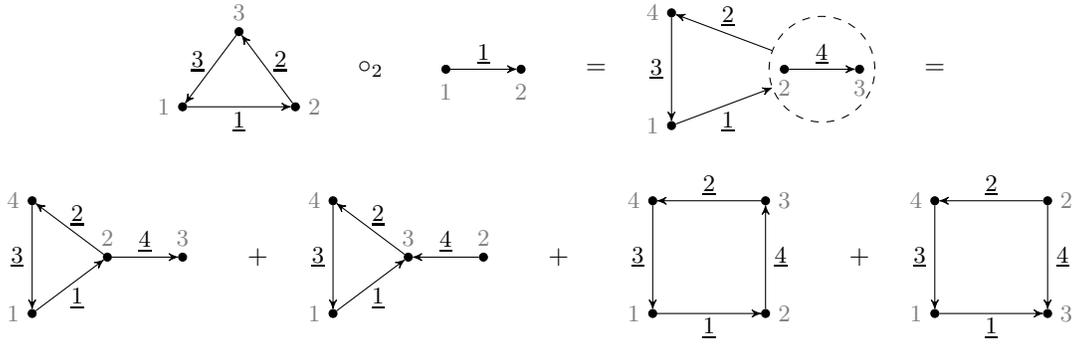
\begin{figure}[htp] 
\centering 
\begin{tikzpicture}[scale=0.5, >=stealth']
\tikzstyle{w}=[circle, draw, minimum size=3, inner sep=1]
\tikzstyle{b}=[circle, draw, fill, minimum size=3, inner sep=1]
\tikzstyle{circ}=[circle, draw, dashed, minimum size=40, inner sep=1]
\node [b] (b1) at (0,0) {};
\draw (-0.5, 0) node[anchor=center, color=gray] {{\small $1$}};
\node [b] (b2) at (3,0) {};
\draw (3.5, 0) node[anchor=center, color=gray] {{\small $2$}};
\node [b] (b3) at (1.5,2) {};
\draw (1.5, 2.5) node[anchor=center, color=gray] {{\small $3$}};
\draw [->] (b1) edge (b2) (b2) edge (b3) (b3) edge (b1); 
\draw (1.5, -0.4) node[anchor=center] {{\small $\und{1}$}};
\draw (2.6, 1.2) node[anchor=center] {{\small $\und{2}$}};
\draw (0.4, 1.2) node[anchor=center] {{\small $\und{3}$}};
\draw (5,1) node[anchor=center] {{$\circ_2$}};
\node [b] (bb1) at (7,1) {};
\draw (7, 0.4) node[anchor=center, color=gray] {{\small $1$}};
\node [b] (bb2) at (9,1) {};
\draw (9, 0.4) node[anchor=center, color=gray] {{\small $2$}};
\draw [->] (bb1) edge (bb2);
\draw (8, 1.4) node[anchor=center] {{\small $\und{1}$}};
\draw (11,1) node[anchor=center] {{$ = $}};
\begin{scope}[shift={(13,0)}]
\node [b] (b1) at (0,-0.5) {};
\draw (-0.5, -0.5) node[anchor=center, color=gray] {{\small $1$}};
\node [b] (b4) at (0,2.5) {};
\draw (-0.5, 2.5) node[anchor=center, color=gray] {{\small $4$}};
\node [circ] (cir) at (4,1) {~};
\draw [->] (b4) edge (b1) (cir) edge (b4) (b1) edge (cir);
\draw (1.5, -0.4) node[anchor=center] {{\small $\und{1}$}};
\draw (1.5, 2.4) node[anchor=center] {{\small $\und{2}$}};
\draw (-0.4, 1) node[anchor=center] {{\small $\und{3}$}};
%
\node [b] (b2) at (3,1) {};
\draw (3, 0.5) node[anchor=center, color=gray] {{\small $2$}};
\node [b] (b3) at (5,1) {};
\draw (5, 0.5) node[anchor=center, color=gray] {{\small $3$}};
\draw [->] (b2) edge (b3);
\draw (4, 1.4) node[anchor=center] {{\small $\und{4}$}};
\end{scope}
\draw (20,1) node[anchor=center] {{$ = $}};
\begin{scope}[shift={(-4,-5.5)}]
\node [b] (b1) at (0,0) {};
\draw (-0.5, 0) node[anchor=center, color=gray] {{\small $1$}};
\node [b] (b4) at (0,3) {};
\draw (-0.5, 3) node[anchor=center, color=gray] {{\small $4$}};
\node [b] (b2) at (2,1.5) {};
\draw (2, 2) node[anchor=center, color=gray] {{\small $2$}};
\node [b] (b3) at (4,1.5) {};
\draw (4, 2) node[anchor=center, color=gray] {{\small $3$}};
\draw [->] (b4) edge (b1) (b1) edge (b2) (b2) edge (b4) edge (b3);
\draw (1.2, 0.4) node[anchor=center] {{\small $\und{1}$}};
\draw (1.2, 2.6) node[anchor=center] {{\small $\und{2}$}};
\draw (-0.4, 1.5) node[anchor=center] {{\small $\und{3}$}};
\draw (3, 1.9) node[anchor=center] {{\small $\und{4}$}};
\draw (6,1.5) node[anchor=center] {{$ + $}};
\begin{scope}[shift={(8,0)}]
\node [b] (b1) at (0,0) {};
\draw (-0.5, 0) node[anchor=center, color=gray] {{\small $1$}};
\node [b] (b4) at (0,3) {};
\draw (-0.5, 3) node[anchor=center, color=gray] {{\small $4$}};
\node [b] (b3) at (2,1.5) {};
\draw (2, 2) node[anchor=center, color=gray] {{\small $3$}};
\node [b] (b2) at (4,1.5) {};
\draw (4, 2) node[anchor=center, color=gray] {{\small $2$}};
\draw [->] (b4) edge (b1) (b1) edge (b3) (b3) edge (b4) (b2) edge (b3);
\draw (1.2, 0.4) node[anchor=center] {{\small $\und{1}$}};
\draw (1.2, 2.6) node[anchor=center] {{\small $\und{2}$}};
\draw (-0.4, 1.5) node[anchor=center] {{\small $\und{3}$}};
\draw (3, 1.9) node[anchor=center] {{\small $\und{4}$}};
\draw (6,1.5) node[anchor=center] {{$ + $}};
\end{scope}
\begin{scope}[shift={(16.5,0)}]
\node [b] (b4) at (0,3) {};
\draw (-0.5, 3) node[anchor=center, color=gray] {{\small $4$}};
\node [b] (b1) at (0,0) {};
\draw (-0.5, 0) node[anchor=center, color=gray] {{\small $1$}};
\node [b] (b2) at (3,0) {};
\draw (3.5, 0) node[anchor=center, color=gray] {{\small $2$}};
\node [b] (b3) at (3,3) {};
\draw (3.5, 3) node[anchor=center, color=gray] {{\small $3$}};
\draw [->] (b4) edge (b1) (b1) edge (b2) (b2) edge (b3) (b3) edge (b4);
\draw (1.5, -0.4) node[anchor=center] {{\small $\und{1}$}};
\draw (1.5, 3.4) node[anchor=center] {{\small $\und{2}$}};
\draw (-0.4, 1.5) node[anchor=center] {{\small $\und{3}$}};
\draw (3.4, 1.5) node[anchor=center] {{\small $\und{4}$}};
\draw (5.5,1.5) node[anchor=center] {{$ + $}};
\end{scope}
\begin{scope}[shift={(24,0)}]
\node [b] (b4) at (0,3) {};
\draw (-0.5, 3) node[anchor=center, color=gray] {{\small $4$}};
\node [b] (b1) at (0,0) {};
\draw (-0.5, 0) node[anchor=center, color=gray] {{\small $1$}};
\node [b] (b3) at (3,0) {};
\draw (3.5, 0) node[anchor=center, color=gray] {{\small $3$}};
\node [b] (b2) at (3,3) {};
\draw (3.5, 3) node[anchor=center, color=gray] {{\small $2$}};
\draw [->] (b4) edge (b1) (b1) edge (b3) (b2) edge (b3) (b2) edge (b4);
\draw (1.5, -0.4) node[anchor=center] {{\small $\und{1}$}};
\draw (1.5, 3.4) node[anchor=center] {{\small $\und{2}$}};
\draw (-0.4, 1.5) node[anchor=center] {{\small $\und{3}$}};
\draw (3.4, 1.5) node[anchor=center] {{\small $\und{4}$}};
\end{scope}
\end{scope}
\end{tikzpicture}
\caption{An example of computing $\circ_2$} \label{fig:ins}
\end{figure}
%
%
\begin{figure}[htp] 
\centering 
\begin{tikzpicture}[scale=0.5, >=stealth']
\tikzstyle{w}=[circle, draw, minimum size=3, inner sep=1]
\tikzstyle{b}=[circle, draw, fill, minimum size=3, inner sep=1]
\tikzstyle{circ}=[circle, draw, dashed, minimum size=40, inner sep=1]
\node [b] (b1) at (0,0) {};
\draw (-0.5, 0) node[anchor=center, color=gray] {{\small $1$}};
\node [b] (b4) at (0,3) {};
\draw (-0.5, 3) node[anchor=center, color=gray] {{\small $4$}};
\node [b] (b2) at (2,1.5) {};
\draw (2, 2) node[anchor=center, color=gray] {{\small $2$}};
\node [b] (b3) at (4,1.5) {};
\draw (4, 2) node[anchor=center, color=gray] {{\small $3$}};
\draw [->] (b4) edge (b1) (b1) edge (b2) (b2) edge (b4) edge (b3);
\draw (1.2, 0.4) node[anchor=center] {{\small $\und{1}$}};
\draw (1.2, 2.6) node[anchor=center] {{\small $\und{2}$}};
\draw (-0.4, 1.5) node[anchor=center] {{\small $\und{3}$}};
\draw (3, 1.9) node[anchor=center] {{\small $\und{4}$}};
\draw (6,1.5) node[anchor=center] {{$ + $}};
\begin{scope}[shift={(8,0)}]
\node [b] (b1) at (0,0) {};
\draw (-0.5, 0) node[anchor=center, color=gray] {{\small $1$}};
\node [b] (b4) at (0,3) {};
\draw (-0.5, 3) node[anchor=center, color=gray] {{\small $4$}};
\node [b] (b3) at (2,1.5) {};
\draw (2, 2) node[anchor=center, color=gray] {{\small $3$}};
\node [b] (b2) at (4,1.5) {};
\draw (4, 2) node[anchor=center, color=gray] {{\small $2$}};
\draw [->] (b4) edge (b1) (b1) edge (b3) (b3) edge (b4) (b2) edge (b3);
\draw (1.2, 0.4) node[anchor=center] {{\small $\und{1}$}};
\draw (1.2, 2.6) node[anchor=center] {{\small $\und{2}$}};
\draw (-0.4, 1.5) node[anchor=center] {{\small $\und{3}$}};
\draw (3, 1.9) node[anchor=center] {{\small $\und{4}$}};
\draw (6,1.5) node[anchor=center] {{$ + $}};
\end{scope}
\begin{scope}[shift={(16.5,0)}]
\node [b] (b4) at (0,3) {};
\draw (-0.5, 3) node[anchor=center, color=gray] {{\small $4$}};
\node [b] (b1) at (0,0) {};
\draw (-0.5, 0) node[anchor=center, color=gray] {{\small $1$}};
\node [b] (b3) at (3,0) {};
\draw (3.5, 0) node[anchor=center, color=gray] {{\small $3$}};
\node [b] (b2) at (3,3) {};
\draw (3.5, 3) node[anchor=center, color=gray] {{\small $2$}};
\draw [->] (b4) edge (b1) (b1) edge (b3) (b2) edge (b3) (b2) edge (b4);
\draw (1.5, -0.4) node[anchor=center] {{\small $\und{1}$}};
\draw (1.5, 3.4) node[anchor=center] {{\small $\und{2}$}};
\draw (-0.4, 1.5) node[anchor=center] {{\small $\und{3}$}};
\draw (3.4, 1.5) node[anchor=center] {{\small $\und{4}$}};
\end{scope}
\end{tikzpicture}
\caption{The image of $\G \circ_2 \ti{\G}$ in $\dfGCo$} \label{fig:ins-dfGC}
\end{figure}

It is not hard to see that the map 
\begin{equation}
\label{bullet}
\G \bullet \ti{\G} : = \sum_{i=1}^n \G \circ_i \ti{\G} ~:~
\span_{\bbK}(\dgra_n^r) \otimes \span_{\bbK}(\dgra_m^q) ~\to~ \span_{\bbK}(\dgra_{m+n -1}^{r+q})
\end{equation}
descends to coinvariants and we get a degree $0$ bilinear operation on $\dfGCo$: 
$$
\bullet ~:~ \dfGCo \otimes  \dfGCo ~ \to  ~  \dfGCo\,. 
$$

We recall that  
\begin{prop}
\label{prop:dfGCo-Lie}
The bracket 
\begin{equation}
\label{dfGCo-Lie}
[ \, \G, \ti{\G} \,]  = \G \bullet \ti{\G}  - (-1)^{|\G| |\ti{\G}|} \ti{\G} \bullet \G
\end{equation}
equips $\dfGCo$ with the structure of a graded Lie algebra. 
\end{prop}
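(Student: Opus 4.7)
The plan is to deduce the graded Lie algebra structure from the stronger fact that $\bullet$ is a graded (right) pre-Lie product on $\dfGCo$, and then invoke the classical observation of Gerstenhaber that the graded commutator of any graded pre-Lie product is automatically a graded Lie bracket. This has the advantage that one need only verify a single identity governing double insertions, and antisymmetry plus Jacobi then come for free.

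First, I would check that $\bullet$ descends to a well-defined degree zero operation on $\dfGCo$. Working at the level of $\span_{\bbK}(\dgra_n^r)$, the sum $\G \bullet \ti{\G} = \sum_{i=1}^n \G \circ_i \ti{\G}$ is invariant under the $\bbS_n$-action that relabels vertices of $\G$ (a relabeling just permutes the summands together with a compensating relabeling inside each $\circ_i$), and it is $\bbS_m \times \bbS_r \times \bbS_q$-equivariant in $\ti{\G}$ and the edge labels because the reattachment step is taken over the full set of possibilities. A direct count gives $2(m+n-1)-2-(r+q)=(2n-2-r)+(2m-2-q)$, so $\bullet$ has degree zero.

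The central computation is the analysis of the associator $(\G_1 \bullet \G_2) \bullet \G_3 - \G_1 \bullet (\G_2 \bullet \G_3)$. Expanding both compositions, the double insertions partition into two classes: (a) $\G_2$ and $\G_3$ are inserted at two \emph{distinct} vertices of $\G_1$; (b) $\G_3$ is inserted at a vertex belonging to the copy of $\G_2$ sitting inside the result of the first insertion. The class (b) contributions reassemble exactly as $\G_1 \bullet (\G_2 \bullet \G_3)$, so they cancel. The class (a) contributions are manifestly graded symmetric in $(\G_2, \G_3)$ after tracking the Koszul sign $(-1)^{|\G_2||\G_3|}$ produced by swapping the two blocks of edge labels and the two blocks of inserted vertices. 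This yields the graded right pre-Lie identity
\begin{equation*}
(\G_1 \bullet \G_2) \bullet \G_3 - \G_1 \bullet (\G_2 \bullet \G_3)
= (-1)^{|\G_2||\G_3|} \bigl[ (\G_1 \bullet \G_3) \bullet \G_2 - \G_1 \bullet (\G_3 \bullet \G_2) \bigr].
\end{equation*}

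Graded antisymmetry of $[\G, \ti{\G}] = \G \bullet \ti{\G} - (-1)^{|\G||\ti{\G}|} \ti{\G} \bullet \G$ is immediate from the definition, and the graded Jacobi identity is a purely formal consequence of the pre-Lie identity above: expanding the three cyclic summands of $[[\G_1,\G_2],\G_3]+\text{cyclic}$ yields twelve $\bullet$-terms which cancel in pairs by applying the pre-Lie identity to each of the three orderings of the inputs. The main obstacle is the sign bookkeeping in the pre-Lie computation: one must verify that the shifts of edge labels (by $|E(\G_1)|$ in $\G_1\circ_i\G_2$ and then by $|E(\G_1)|+|E(\G_2)|$ in the double insertion) combined with the sign representation on $\bbS_{r+q}$ and the degree convention $|\G|=2|V(\G)|-2-|E(\G)|$ together produce exactly the Koszul sign $(-1)^{|\G_2||\G_3|}$ that makes the class (a) contribution symmetric; once this is pinned down, everything else is formal and the proposition follows.
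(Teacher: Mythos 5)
Your proof is correct, but it takes a genuinely different route from the paper. The paper does not verify the Jacobi identity directly: it simply cites \cite[Proposition C.2]{DeligneTw}, identifying $\dfGCo$ with the convolution Lie algebra $\Conv^{\oplus}(\La^2\coCom, \dGra)$ via the averaging isomorphism of Section \ref{sec:dfGC-Conv}, so that the bracket \eqref{dfGCo-Lie} is the standard convolution bracket and the Lie axioms are inherited. You instead prove the graded right pre-Lie identity for $\bullet$ by hand and invoke Gerstenhaber's observation that the graded commutator of a pre-Lie product is a Lie bracket. Your key computation is sound: in the associator $(\G_1\bullet\G_2)\bullet\G_3 - \G_1\bullet(\G_2\bullet\G_3)$ the ``nested'' insertions (your class (b)) cancel because the edge-label shifts agree ($\G_3$'s labels land above $r_1+r_2$ either way), and the surviving distinct-vertex insertions are symmetric in $(\G_2,\G_3)$ up to the sign $(-1)^{r_2 r_3}$ coming from swapping the two blocks of edge labels in the sign representation, which equals $(-1)^{|\G_2||\ti{\G}_3|}$ precisely because $|\G|=2n-2-r\equiv r \pmod 2$. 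What each approach buys: yours is self-contained and elementary, requiring no operadic machinery; the paper's is essentially free given the convolution-algebra identification, which it needs anyway for later sections, and it packages the sign bookkeeping once and for all inside the cited general result rather than redoing it for this particular operad.
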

\begin{proof}
This statement follows directly from \cite[Proposition C.2]{DeligneTw}. 
More details about the relationship between $\dfGCo$ and a certain 
convolution Lie algebra \cite[Section 4]{notes}, \cite{MV-nado} are given 
in Section \ref{sec:dfGC-Conv} below.
\end{proof}

It is clear that 
\begin{equation}
\label{G-ed}
\begin{tikzpicture}[scale=0.5, >=stealth']
\tikzstyle{w}=[circle, draw, minimum size=3, inner sep=1]
\tikzstyle{b}=[circle, draw, fill, minimum size=3, inner sep=1]
\draw (-2,0) node[anchor=center] {{$\G_{\ed} ~ : =$}};
\node [b] (b1) at (0,0) {};
\draw (0,0.5) node[anchor=center, color=gray] {{\small $1$}};
\node [b] (b2) at (2,0) {};
\draw (2, 0.5) node[anchor=center, color=gray] {{\small $2$}};
\draw [->] (b1) edge (b2);
\end{tikzpicture}
\end{equation}
is a non-zero vector in $\dfGCo$ of degree $1$. 
Moreover, a direct computation shows that $\G_{\ed}$
satisfies the MC equation
\begin{equation}
\label{G-ed-MC}
[\G_{\ed}, \G_{\ed}] = 0. 
\end{equation}
So we use $\G_{\ed}$ to define the following differential on $\dfGCo$: 
\begin{equation}
\label{diff-dfGCo}
\pa =  [\G_{\ed},  ~ ].
\end{equation}

To define the (full) directed graph complex $\dfGC$, we denote by
$\dfGC(n)$ the subspace of $\dfGCo$ which is 
spanned by isomorphism classes of even graphs with exactly $n$ vertices. 
It is obvious that $\pa \big( \dfGC(n) \big) \subset \dfGC(n+1)$ and 
it allows us to give the following definition:
%
%
\begin{defi}
\label{dfn:dfGC}
The \emph{full directed graph complex} $\dfGC$ is the following completion of $\dfGCo$
\begin{equation}
\label{dfGC}
\dfGC : = \prod_{n \ge 1}  \dfGC(n). 
\end{equation}
\end{defi}

\begin{example}
\label{exam:G-bul-G-lp}
Let $\G_{\bul}$ be the graph in $\dgra^0_1$ which consists of a
single vertex and  $\G_{\lp}$ be the graph in $\dgra^1_1$ which consists of 
a single loop. For these graphs we have 
$$
\begin{tikzpicture}[scale=0.5, >=stealth']
\tikzstyle{w}=[circle, draw, minimum size=3, inner sep=1]
\tikzstyle{b}=[circle, draw, fill, minimum size=3, inner sep=1]
\draw (-7,0) node[anchor=center] {{$\pa \G_{\bul} ~=~ \G_{\ed}\,,$}};
\draw (-0.5,0) node[anchor=center] {{$ \pa \G_{\lp} ~ = $}};
\node [b] (b1) at (2,0) {};
\draw (1.5,0) node[anchor=center, color =gray] {{\small $1$}};
\node [b] (b2) at (4, 0) {};
\draw (4.5, 0) node[anchor=center, color=gray] {{\small $2$}};
\draw [<-] (b1) ..controls (2.5,1) and (3.5,1) .. (b2);
\draw [<-] (b2) ..controls (3.5, -1) and (2.5, -1) .. (b1);
\draw (3,1.2) node[anchor=center] {{\small $\und{1}$}};
\draw (3,-1.2) node[anchor=center] {{\small $\und{2}$}};
\draw (6,0) node[anchor=center] {{$ = ~ 0.$}};
\end{tikzpicture}
$$
Thus $\G_{\lp}$ represents a degree $-1$ (non-trivial) cocycle in $\dfGC$\,. 

Another example of the computation of the differential in $\dfGC$ is given in figure \ref{fig:G123}. 
In this example, only the uni-bivalent graphs coming from the insertion of $\G_{\ed}$ 
into vertex $2$ survive. All the other graphs cancel each other.
\begin{figure}[htp]
\centering 
\begin{tikzpicture}[scale=0.5, >=stealth']
\tikzstyle{w}=[circle, draw, minimum size=4, inner sep=1]
\tikzstyle{b}=[circle, draw, fill, minimum size=4, inner sep=1]
\draw (-1,0) node[anchor=center] {{$\pa$}};
\node [b] (b1) at (0,0) {};
\draw (0,-0.6) node[anchor=center, color =gray] {{\small $1$}};
\node [b] (b2) at (2,0) {};
\draw (2,-0.6) node[anchor=center, color =gray] {{\small $2$}};
\node [b] (b3) at (4,0) {};
\draw (4,-0.6) node[anchor=center, color =gray] {{\small $3$}};
\draw [->] (b1) edge (b2);
\draw (1,0.4) node[anchor=center] {{\small $\und{1}$}};
\draw [->] (b2) edge (b3);
\draw (3,0.4) node[anchor=center] {{\small $\und{2}$}};
\draw (5.3,0) node[anchor=center] {{$= ~ $}};
\begin{scope}[shift={(6.5,0)}]
\node [b] (b1) at (0,0) {};
\draw (0,-0.6) node[anchor=center, color =gray] {{\small $1$}};
\node [b] (b2) at (2,0) {};
\draw (2,-0.6) node[anchor=center, color =gray] {{\small $2$}};
\node [b] (b3) at (4,0) {};
\draw (4,-0.6) node[anchor=center, color =gray] {{\small $3$}};
\node [b] (b4) at (6,0) {};
\draw (6,-0.6) node[anchor=center, color =gray] {{\small $4$}};
\draw [->] (b1) edge (b2);
\draw (1,0.4) node[anchor=center] {{\small $\und{2}$}};
\draw [->] (b2) edge (b3);
\draw (3,0.4) node[anchor=center] {{\small $\und{3}$}};
\draw [->] (b3) edge (b4);
\draw (5,0.4) node[anchor=center] {{\small $\und{1}$}};
\end{scope}
\begin{scope}[shift={(15,0)}]
\draw (-1,0) node[anchor=center] {{$+ ~ $}};
\node [b] (b1) at (0,0) {};
\draw (0,-0.6) node[anchor=center, color =gray] {{\small $1$}};
\node [b] (b2) at (2,0) {};
\draw (2,-0.6) node[anchor=center, color =gray] {{\small $2$}};
\node [b] (b3) at (4,0) {};
\draw (4,-0.6) node[anchor=center, color =gray] {{\small $3$}};
\node [b] (b4) at (6,0) {};
\draw (6,-0.6) node[anchor=center, color =gray] {{\small $4$}};
\draw [->] (b1) edge (b2);
\draw (1,0.4) node[anchor=center] {{\small $\und{2}$}};
\draw [->] (b2) edge (b3);
\draw (3,0.4) node[anchor=center] {{\small $\und{3}$}};
\draw [<-] (b3) edge (b4);
\draw (5,0.4) node[anchor=center] {{\small $\und{1}$}};
\end{scope}
\begin{scope}[shift={(-3,-4)}]
\draw (-1.3,0) node[anchor=center] {{$+ ~ $}};
\node [b] (b1) at (0,0) {};
\draw (0,-0.6) node[anchor=center, color =gray] {{\small $1$}};
\node [b] (b2) at (2,0) {};
\draw (2,-0.6) node[anchor=center, color =gray] {{\small $2$}};
\node [b] (b3) at (4,0) {};
\draw (4,-0.6) node[anchor=center, color =gray] {{\small $3$}};
\node [b] (b4) at (6,0) {};
\draw (6,-0.6) node[anchor=center, color =gray] {{\small $4$}};
\draw [->] (b1) edge (b2);
\draw (1,0.4) node[anchor=center] {{\small $\und{1}$}};
\draw [->] (b2) edge (b3);
\draw (3,0.4) node[anchor=center] {{\small $\und{2}$}};
\draw [->] (b3) edge (b4);
\draw (5,0.4) node[anchor=center] {{\small $\und{3}$}};
\end{scope}
\begin{scope}[shift={(6,-4)}]
\draw (-1.3,0) node[anchor=center] {{$+ ~$}};
\node [b] (b1) at (0,0) {};
\draw (0,-0.6) node[anchor=center, color =gray] {{\small $1$}};
\node [b] (b2) at (2,0) {};
\draw (2,-0.6) node[anchor=center, color =gray] {{\small $2$}};
\node [b] (b3) at (4,0) {};
\draw (4,-0.6) node[anchor=center, color =gray] {{\small $3$}};
\node [b] (b4) at (6,0) {};
\draw (6,-0.6) node[anchor=center, color =gray] {{\small $4$}};
\draw [<-] (b1) edge (b2);
\draw (1,0.4) node[anchor=center] {{\small $\und{1}$}};
\draw [->] (b2) edge (b3);
\draw (3,0.4) node[anchor=center] {{\small $\und{2}$}};
\draw [->] (b3) edge (b4);
\draw (5,0.4) node[anchor=center] {{\small $\und{3}$}};
\end{scope}
\begin{scope}[shift={(15,-4)}]
\draw (-1.3,0) node[anchor=center] {{$+ ~ $}};
\node [b] (b1) at (0,0) {};
\draw (0,-0.6) node[anchor=center, color =gray] {{\small $1$}};
\node [b] (b2) at (2,0) {};
\draw (2,-0.6) node[anchor=center, color =gray] {{\small $2$}};
\node [b] (b3) at (4,0) {};
\draw (4,-0.6) node[anchor=center, color =gray] {{\small $3$}};
\node [b] (b4) at (2,2) {};
\draw (2,2.5) node[anchor=center, color =gray] {{\small $4$}};
\draw [->] (b1) edge (b2);
\draw (1,-0.5) node[anchor=center] {{\small $\und{2}$}};
\draw [->] (b2) edge (b3);
\draw (3,-0.5) node[anchor=center] {{\small $\und{3}$}};
\draw [->] (b2) edge (b4);
\draw (1.7,1) node[anchor=center] {{\small $\und{1}$}};
\end{scope}
\begin{scope}[shift={(22,-4)}]
\draw (-1.3,0) node[anchor=center] {{$+ ~ $}};
\node [b] (b1) at (0,0) {};
\draw (0,-0.6) node[anchor=center, color =gray] {{\small $1$}};
\node [b] (b2) at (2,0) {};
\draw (2,-0.6) node[anchor=center, color =gray] {{\small $2$}};
\node [b] (b3) at (4,0) {};
\draw (4,-0.6) node[anchor=center, color =gray] {{\small $3$}};
\node [b] (b4) at (2,2) {};
\draw (2,2.5) node[anchor=center, color =gray] {{\small $4$}};
\draw [->] (b1) edge (b2);
\draw (1,-0.5) node[anchor=center] {{\small $\und{2}$}};
\draw [->] (b2) edge (b3);
\draw (3,-0.5) node[anchor=center] {{\small $\und{3}$}};
\draw [<-] (b2) edge (b4);
\draw (1.7,1) node[anchor=center] {{\small $\und{1}$}};
\end{scope}
\begin{scope}[shift={(-2,-8)}]
\draw (-1.2,0) node[anchor=center] {{$- ~ $}};
\node [b] (b1) at (0,0) {};
\draw (0,-0.6) node[anchor=center, color =gray] {{\small $1$}};
\node [b] (b2) at (2,0) {};
\draw (2,-0.6) node[anchor=center, color =gray] {{\small $2$}};
\node [b] (b3) at (4,0) {};
\draw (4,-0.6) node[anchor=center, color =gray] {{\small $3$}};
\node [b] (b4) at (6,0) {};
\draw (6,-0.6) node[anchor=center, color =gray] {{\small $4$}};
\draw [->] (b1) edge (b2);
\draw (1,0.4) node[anchor=center] {{\small $\und{1}$}};
\draw [->] (b2) edge (b3);
\draw (3,0.4) node[anchor=center] {{\small $\und{2}$}};
\draw [->] (b3) edge (b4);
\draw (5,0.4) node[anchor=center] {{\small $\und{3}$}};
\end{scope}
\begin{scope}[shift={(7,-8)}]
\draw (-1.2,0) node[anchor=center] {{$- ~$}};
\node [b] (b1) at (0,0) {};
\draw (0,-0.6) node[anchor=center, color =gray] {{\small $1$}};
\node [b] (b2) at (2,0) {};
\draw (2,-0.6) node[anchor=center, color =gray] {{\small $2$}};
\node [b] (b3) at (4,0) {};
\draw (4,-0.6) node[anchor=center, color =gray] {{\small $3$}};
\node [b] (b4) at (6,0) {};
\draw (6,-0.6) node[anchor=center, color =gray] {{\small $4$}};
\draw [->] (b1) edge (b2);
\draw (1,0.4) node[anchor=center] {{\small $\und{1}$}};
\draw [->] (b2) edge (b3);
\draw (3,0.4) node[anchor=center] {{\small $\und{2}$}};
\draw [<-] (b3) edge (b4);
\draw (5,0.4) node[anchor=center] {{\small $\und{3}$}};
\end{scope}
\begin{scope}[shift={(16,-8)}]
\draw (-1.2,0) node[anchor=center] {{$- ~ $}};
\node [b] (b1) at (0,0) {};
\draw (0,-0.6) node[anchor=center, color =gray] {{\small $1$}};
\node [b] (b2) at (2,0) {};
\draw (2,-0.6) node[anchor=center, color =gray] {{\small $2$}};
\node [b] (b4) at (4,0) {};
\draw (4,-0.6) node[anchor=center, color =gray] {{\small $4$}};
\node [b] (b3) at (2,2) {};
\draw (2,2.6) node[anchor=center, color =gray] {{\small $3$}};
\draw [->] (b1) edge (b2);
\draw (1, -0.5) node[anchor=center] {{\small $\und{1}$}};
\draw [->] (b2) edge (b4);
\draw (3, - 0.5) node[anchor=center] {{\small $\und{2}$}};
\draw [->] (b2) edge (b3);
\draw (1.7,1) node[anchor=center] {{\small $\und{3}$}};
\end{scope}
\begin{scope}[shift={(23,-8)}]
\draw (-1.2,0) node[anchor=center] {{$- ~$}};
\node [b] (b1) at (0,0) {};
\draw (0,-0.6) node[anchor=center, color =gray] {{\small $1$}};
\node [b] (b2) at (2,0) {};
\draw (2,-0.6) node[anchor=center, color =gray] {{\small $2$}};
\node [b] (b4) at (4,0) {};
\draw (4,-0.6) node[anchor=center, color =gray] {{\small $4$}};
\node [b] (b3) at (2,2) {};
\draw (2,2.6) node[anchor=center, color =gray] {{\small $3$}};
\draw [->] (b1) edge (b2);
\draw (1, -0.5) node[anchor=center] {{\small $\und{1}$}};
\draw [->] (b2) edge (b4);
\draw (3, - 0.5) node[anchor=center] {{\small $\und{2}$}};
\draw [<-] (b2) edge (b3);
\draw (1.7,1) node[anchor=center] {{\small $\und{3}$}};
\end{scope}
\begin{scope}[shift={(-4,-12)}]
\draw (-1.2,0) node[anchor=center] {{$- ~ $}};
\node [b] (b1) at (0,0) {};
\draw (0,-0.6) node[anchor=center, color =gray] {{\small $1$}};
\node [b] (b2) at (2,0) {};
\draw (2,-0.6) node[anchor=center, color =gray] {{\small $2$}};
\node [b] (b3) at (4,0) {};
\draw (4,-0.6) node[anchor=center, color =gray] {{\small $3$}};
\node [b] (b4) at (6,0) {};
\draw (6,-0.6) node[anchor=center, color =gray] {{\small $4$}};
\draw [->] (b1) edge (b2);
\draw (1,0.4) node[anchor=center] {{\small $\und{3}$}};
\draw [->] (b2) edge (b3);
\draw (3,0.4) node[anchor=center] {{\small $\und{1}$}};
\draw [->] (b3) edge (b4);
\draw (5,0.4) node[anchor=center] {{\small $\und{2}$}};
\end{scope}
\begin{scope}[shift={(5,-12)}]
\draw (-1.2,0) node[anchor=center] {{$- ~ $}};
\node [b] (b1) at (0,0) {};
\draw (0,-0.6) node[anchor=center, color =gray] {{\small $1$}};
\node [b] (b2) at (2,0) {};
\draw (2,-0.6) node[anchor=center, color =gray] {{\small $2$}};
\node [b] (b3) at (4,0) {};
\draw (4,-0.6) node[anchor=center, color =gray] {{\small $3$}};
\node [b] (b4) at (6,0) {};
\draw (6,-0.6) node[anchor=center, color =gray] {{\small $4$}};
\draw [<-] (b1) edge (b2);
\draw (1,0.4) node[anchor=center] {{\small $\und{3}$}};
\draw [->] (b2) edge (b3);
\draw (3,0.4) node[anchor=center] {{\small $\und{1}$}};
\draw [->] (b3) edge (b4);
\draw (5,0.4) node[anchor=center] {{\small $\und{2}$}};
\end{scope}
\begin{scope}[shift={(14,-12)}]
\draw (-1.2,0) node[anchor=center] {{$- ~ $}};
\node [b] (b1) at (0,0) {};
\draw (0,-0.6) node[anchor=center, color =gray] {{\small $1$}};
\node [b] (b2) at (2,0) {};
\draw (2,-0.6) node[anchor=center, color =gray] {{\small $2$}};
\node [b] (b3) at (4,0) {};
\draw (4,-0.6) node[anchor=center, color =gray] {{\small $3$}};
\node [b] (b4) at (6,0) {};
\draw (6,-0.6) node[anchor=center, color =gray] {{\small $4$}};
\draw [->] (b1) edge (b2);
\draw (1,0.4) node[anchor=center] {{\small $\und{1}$}};
\draw [->] (b2) edge (b3);
\draw (3,0.4) node[anchor=center] {{\small $\und{3}$}};
\draw [->] (b3) edge (b4);
\draw (5,0.4) node[anchor=center] {{\small $\und{2}$}};
\end{scope}
\begin{scope}[shift={(23,-12)}]
\draw (-1.2,0) node[anchor=center] {{$- ~ $}};
\node [b] (b1) at (0,0) {};
\draw (0,-0.6) node[anchor=center, color =gray] {{\small $1$}};
\node [b] (b2) at (2,0) {};
\draw (2,-0.6) node[anchor=center, color =gray] {{\small $2$}};
\node [b] (b3) at (4,0) {};
\draw (4,-0.6) node[anchor=center, color =gray] {{\small $3$}};
\node [b] (b4) at (6,0) {};
\draw (6,-0.6) node[anchor=center, color =gray] {{\small $4$}};
\draw [->] (b1) edge (b2);
\draw (1,0.4) node[anchor=center] {{\small $\und{1}$}};
\draw [<-] (b2) edge (b3);
\draw (3,0.4) node[anchor=center] {{\small $\und{3}$}};
\draw [->] (b3) edge (b4);
\draw (5,0.4) node[anchor=center] {{\small $\und{2}$}};
\end{scope}
\begin{scope}[shift={(4,-16)}]
\draw (-1.5,0) node[anchor=center] {{$= ~$}};
\node [b] (b1) at (0,0) {};
\draw (0,-0.6) node[anchor=center, color =gray] {{\small $1$}};
\node [b] (b2) at (2,0) {};
\draw (2,-0.6) node[anchor=center, color =gray] {{\small $2$}};
\node [b] (b3) at (4,0) {};
\draw (4,-0.6) node[anchor=center, color =gray] {{\small $3$}};
\node [b] (b4) at (6,0) {};
\draw (6,-0.6) node[anchor=center, color =gray] {{\small $4$}};
\draw [->] (b1) edge (b2);
\draw (1,0.4) node[anchor=center] {{\small $\und{1}$}};
\draw [->] (b2) edge (b3);
\draw (3,0.4) node[anchor=center] {{\small $\und{2}$}};
\draw [->] (b3) edge (b4);
\draw (5,0.4) node[anchor=center] {{\small $\und{3}$}};
\end{scope}
\begin{scope}[shift={(13, -16)}]
\draw (-1.2,0) node[anchor=center] {{$+ ~$}};
\node [b] (b1) at (0,0) {};
\draw (0,-0.6) node[anchor=center, color =gray] {{\small $1$}};
\node [b] (b2) at (2,0) {};
\draw (2,-0.6) node[anchor=center, color =gray] {{\small $2$}};
\node [b] (b3) at (4,0) {};
\draw (4,-0.6) node[anchor=center, color =gray] {{\small $3$}};
\node [b] (b4) at (6,0) {};
\draw (6,-0.6) node[anchor=center, color =gray] {{\small $4$}};
\draw [->] (b1) edge (b2);
\draw (1,0.4) node[anchor=center] {{\small $\und{1}$}};
\draw [<-] (b2) edge (b3);
\draw (3,0.4) node[anchor=center] {{\small $\und{2}$}};
\draw [->] (b3) edge (b4);
\draw (5,0.4) node[anchor=center] {{\small $\und{3}$}};
\end{scope}
\end{tikzpicture}
\caption{An example of computing $\pa(\G)$} \label{fig:G123}
\end{figure} 
\end{example}

Let us observe that 
\begin{prop}
\label{prop:dfGC} 
If $\G$ is a connected (even) graph in $\dgra^r_n$ with $r \ge 1$, then   
\begin{equation}
\label{pa-dfGC-simpler}
\pa \G =  - (-1)^{|\G|}  \sum_{i=1}^n \ga_i \,,
\end{equation}
where $\ga_i$ is obtained from the linear combination $\G \circ_i \G_{\ed}$
by discarding all graphs in which either vertex $i$ or vertex 
$i+1$ has valency $1$.
\end{prop}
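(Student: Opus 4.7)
The plan is to unfold $\pa \G = [\G_{\ed}, \G]$ using the definition of the graded bracket. Since $|\G_{\ed}| = 1$, we have
$$\pa \G \;=\; \G_{\ed} \bullet \G \;-\; (-1)^{|\G|}\, \G \bullet \G_{\ed}.$$
The strategy is to identify the ``bad'' summands of $\G \bullet \G_{\ed}$ (those discarded in forming the $\ga_i$) with a signed copy of $\G_{\ed} \bullet \G$, so that the first term above cancels this bad part and only $-(-1)^{|\G|}\sum_i \ga_i$ survives.

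First I unpack $\G \bullet \G_{\ed} = \sum_{i=1}^n \G \circ_i \G_{\ed}$. Each $\G \circ_i \G_{\ed}$ is a sum over ways to attach the edges of $\G$ previously incident to old vertex $i$ to one of the two new vertices $i$, $i+1$ introduced by inserting $\G_{\ed}$. A summand is excluded from $\ga_i$ precisely in two cases: (A) every such edge attaches to the new vertex $i$, so $i+1$ is univalent; (B) every edge attaches to $i+1$, leaving $i$ univalent. The hypothesis that $\G$ is connected with $r\ge 1$ forces every vertex of $\G$ to have valency $\ge 1$, so (A) and (B) never collapse to a single configuration (which would happen only when the old vertex $i$ were isolated).

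Next I claim the case-(A) graphs, summed over $i$, biject with the summands of $\G_{\ed}\circ_1 \G$ summed over $v\in V(\G)$, and similarly case (B) matches $\G_{\ed}\circ_2 \G$. Indeed, deleting the univalent vertex $i+1$ from a case-(A) graph recovers $\G$ with an extra outgoing edge attached at the vertex $v=i$, which is exactly the $v=i$ summand of $\G_{\ed}\circ_1 \G$. The only discrepancy is an edge-label convention: inside $\G\circ_i \G_{\ed}$ the extra edge carries label $r+1$ (edges of $\G$ keeping labels $1,\dots,r$), whereas inside $\G_{\ed}\circ_j \G$ it carries label $1$ (edges of $\G$ shifted to $2,\dots,r+1$). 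The permutation of $\{1,\dots,r+1\}$ converting one labeling to the other is an $(r+1)$-cycle, with sign $(-1)^r$; using $|\G|=2n-2-r$ this equals $(-1)^{|\G|}$. Hence, against the $\sgn_{r+1}$-representation on edges, the bad part of $\G \bullet \G_{\ed}$ equals $(-1)^{|\G|}\,\G_{\ed}\bullet \G$ in $\dfGCo$.

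Substituting $\G \bullet \G_{\ed} = \sum_i \ga_i + (-1)^{|\G|}\, \G_{\ed}\bullet \G$ into the expression for $\pa \G$ yields
$$\pa \G \;=\; \G_{\ed}\bullet \G \;-\; (-1)^{|\G|}\!\left(\sum_{i=1}^n \ga_i \,+\, (-1)^{|\G|}\, \G_{\ed}\bullet \G\right) \;=\; -(-1)^{|\G|}\sum_{i=1}^n \ga_i,$$
as claimed. The main obstacle is the sign bookkeeping in the coinvariant quotient: one has to verify carefully that the cyclic relabeling of edges produces exactly the sign $(-1)^{|\G|}$, and to check that the connectedness assumption genuinely excludes the degenerate isolated-vertex situation in which (A) and (B) would coincide and the bad terms would undercount $\G_{\ed}\bullet \G$ by a factor of two.
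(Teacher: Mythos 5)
Your proof is correct and follows essentially the same route as the paper: both arguments identify the discarded (``bad'') summands of $\G \bullet \G_{\ed}$ with $(-1)^{|\G|}\,\bigl(\G_{\ed}\circ_1\G + \G_{\ed}\circ_2\G\bigr)$, using the no-isolated-vertex consequence of connectedness, so that this part cancels against $\G_{\ed}\bullet\G$ in the bracket. Your explicit verification that the edge-relabeling is an $(r+1)$-cycle of sign $(-1)^r = (-1)^{|\G|}$ is a detail the paper leaves implicit, but it is the same computation.
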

\begin{proof} 
Since each vertex of $\G$ is adjacent to at least one edge, in $\dfGCo$, we have 
$$
\G_{\ed} \circ_1 \G + \G_{\ed} \circ_2 \G   =  (-1)^{|\G|} \sum_{i=1}^n \ga'_i 
$$
where $\ga'_i$ is obtained from the linear combination 
$\G \circ_i \G_{\ed}$ by keeping only the graphs in which either
vertex $i$ or vertex $i+1$ has valency $1$.

Thus \eqref{pa-dfGC-simpler} follows. 
\end{proof}

%
%
\subsection{$\fGC$, $\GC$, and other variants of $\dfGC$. Examples of cocycles}
\label{sec:fGC-GC}

For a pair of integers $n \ge 1, ~ r \ge 0$, we introduce the auxiliary set 
$\gra_n^r$.  An element $\G$ of $\gra_n^r$ is an undirected graph with the set of vertices 
$V(\G) = \{\gray{1,2, \dots, n}\} $ and the set of edges $E(\G) = \{\und{1}, \und{2}, \dots, \und{r}\}$.

The set $\gra_{n}^r$ is equipped with the obvious action of the group $\bbS_n \times \bbS_r$.
So, by analogy with \eqref{dfGC-oplus}, we set 
\begin{equation}
\label{fGC-oplus}
\fGCo ~ : =~ \bigoplus_{n \ge 1,~r \ge 0} ~  \big( \bs^{2 n - 2 -r }  
\span_{\bbK}(\gra_{n}^r) \otimes  \sgn_{r} \big)_{\bbS_n \times \bbS_r }\,.
\end{equation}

Just as for $\dfGCo$, a graph $\G \in \gra_n^r$ 
gives us the zero vector in $\fGCo$ if and only if 
$\G$ has an automorphism which induces an odd permutation in $\bbS_r$.
So, by analogy with directed graphs, we say that a graph $\G \in \gra_n^r$ is {\it odd} 
if it has  an automorphism which induces an odd permutation in $\bbS_r$. 
Otherwise, we say that $\G$ is {\it even}. 
For example, the tetrahedron shown in figure \ref{fig:tetra} is even 
and the triangle shown in figure \ref{fig:triangle} is odd. 

It is easy to see that $\fGCo$ is the span of isomorphism classes of 
all even (undirected) graphs with the rule that an even graph $\G \in \gra_n^r$ 
carries the degree $2n - 2 - r$. 
\begin{figure}[htp]
\begin{minipage}[t]{0.45\linewidth}
\centering 
\begin{tikzpicture}[scale=0.5, >=stealth']
\tikzstyle{w}=[circle, draw, minimum size=4, inner sep=1]
\tikzstyle{b}=[circle, draw, fill, minimum size=4, inner sep=1]
\node [b] (b1) at (0,0) {};
\draw (0,-0.6) node[anchor=center, color=gray] {{\small $1$}};
\node [b] (b2) at (0,2) {};
\draw (0,2.6) node[anchor=center, color=gray] {{\small $2$}};
\node [b] (b3) at (2,2) {};
\draw (2,2.6) node[anchor=center, color=gray] {{\small $3$}};
\node [b] (b4) at (2,0) {};
\draw (2,-0.6) node[anchor=center, color=gray] {{\small $4$}};
\draw  (b1) edge (b2) edge (b3) edge (b4) (b2) edge 
(b3) edge (b4) (b3) edge (b4);
\end{tikzpicture}
~\\[0.3cm]
\caption{This graph is even} \label{fig:tetra}
\end{minipage}
\begin{minipage}[t]{0.45\linewidth}
\centering 
\begin{tikzpicture}[scale=0.5, >=stealth']
\tikzstyle{w}=[circle, draw, minimum size=4, inner sep=1]
\tikzstyle{b}=[circle, draw, fill, minimum size=4, inner sep=1]
\node [b] (b1) at (0,0) {};
\draw (0,-0.6) node[anchor=center,  color=gray] {{\small $1$}};
\node [b] (b2) at (2,0) {};
\draw (2,-0.6) node[anchor=center,  color=gray] {{\small $2$}};
\node [b] (b3) at (0,2) {};
\draw (0,2.5) node[anchor=center,  color=gray] {{\small $3$}};
\draw  (b1) edge (b2) edge (b3) (b2) edge (b3);
\end{tikzpicture}
~\\[0.3cm]
\caption{This graph is odd} \label{fig:triangle}
\end{minipage}
\end{figure} 

We should mention that in figures \ref{fig:tetra} and \ref{fig:triangle} as well 
as in some further figures, we often omit labels for edges. The reader should keep in 
mind that an even graph without labels on edges define 
a vector in $\fGCo$ only up to a sign factor.   

Using the analogous map $\circ_i  :  \span_{\bbK}(\gra_{n}^r) \otimes \span_{\bbK}(\gra_{m}^q)  \to  \fGCo$,
we define the degree $0$ binary operation
$\bullet ~:~ \fGCo \otimes  \fGCo ~ \to  ~  \fGCo$
by setting
\begin{equation}
\label{bullet-fGCo}
\G \bullet \ti{\G} : = \sum_{i=1}^n \G \circ_i \ti{\G}\,.
\end{equation}
Moreover, we claim that the same formula \eqref{dfGCo-Lie} 
defines a Lie bracket on the graded vector space $\fGCo$. 

A direct computation shows that the (non-zero) degree $1$ 
vector\footnote{It is convenient to have the factor $1/2$ in the definition of $\G^{un}_{\ed}$.} 
$$
\begin{tikzpicture}[scale=0.5, >=stealth']
\tikzstyle{w}=[circle, draw, minimum size=3, inner sep=1]
\tikzstyle{b}=[circle, draw, fill, minimum size=3, inner sep=1]
\draw (-2.6,0.1) node[anchor=center] {{$\displaystyle \G^{un}_{\ed} ~ : = ~\frac{1}{2}$}};
\node [b] (b1) at (0,0) {};
\draw (0,0.5) node[anchor=center, color=gray] {{\small $1$}};
\node [b] (b2) at (2,0) {};
\draw (2, 0.5) node[anchor=center, color=gray] {{\small $2$}};
\draw (b1) edge (b2);
\end{tikzpicture}
$$
satisfies the MC equation 
$$
[\G^{un}_{\ed} \, , \, \G^{un}_{\ed} ] = 0. 
$$
So we define the differential on $\fGCo$ by the formula: 
\begin{equation}
\label{diff-fGCo}
\pa =  [\G^{un}_{\ed},  ~ ].
\end{equation}

Just as for $\dfGCo$, we denote by $\fGC(n)$ the subspace of $\fGCo$ 
which is spanned by isomorphism classes of even graphs with exactly $n$ vertices. 
We also observe that $\pa \big( \fGC(n) \big) \subset \fGC(n+1)$.
So we define the {\it full graph complex} $\fGC$ as the following 
completion of $\fGCo$
\begin{equation}
\label{fGC}
\fGC ~ : = ~ \prod_{n \ge 1}  \fGC(n). 
\end{equation}

\subsubsection{$\GC$ and Kontsevich's graph complex $\GC_{1ve}$}
\label{sec:GC-1ve}

Recall that a vertex $v$ of a graph $\G$ is called a 
{\it cut vertex} if $\G$ becomes disconnected  upon 
deleting $v$. A graph $\G$ without cut vertices is called 
{\it $1$-vertex irreducible}. For example, the tetrahedron 
shown in figure \ref{fig:tetra} is $1$-vertex irreducible while the graph 
shown in figure \ref{fig:kissing-tetra} is not. 
%
%
\begin{figure}[htp]
\centering 
\begin{tikzpicture}[scale=0.5, >=stealth']
\tikzstyle{w}=[circle, draw, minimum size=4, inner sep=1]
\tikzstyle{b}=[circle, draw, fill, minimum size=4, inner sep=1]
\node [b] (b1) at (0,0) {};
\draw (0,0.6) node[anchor=center, color=gray] {{\small $1$}};
\node [b] (b2) at (1,1) {};
\draw (1,1.6) node[anchor=center, color=gray] {{\small $2$}};
\node [b] (b3) at (1,-1) {};
\draw (1,-1.6) node[anchor=center, color=gray] {{\small $3$}};
\node [b] (b4) at (2,0) {};
\draw (2,0.6) node[anchor=center, color=gray] {{\small $4$}};
\node [b] (b5) at (3,1) {};
\draw (3,1.6) node[anchor=center, color=gray] {{\small $5$}};
\node [b] (b6) at (3,-1) {};
\draw (3,-1.6) node[anchor=center, color=gray] {{\small $6$}};
\node [b] (b7) at (4,0) {};
\draw (4.4,0) node[anchor=center, color=gray] {{\small $7$}};
\draw  (b1) edge (b2) edge (b3) edge (b4) (b2) edge (b3) edge (b4) (b3) edge (b4);
\draw (b4) edge (b5) edge (b6) edge (b7) (b5) edge (b6) edge (b7) (b6) edge (b7);
\end{tikzpicture}
~\\[0.3cm]
\caption{Vertex $4$ is a cut vertex of this graph} \label{fig:kissing-tetra}
\end{figure}
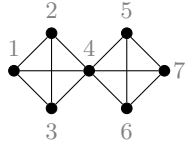

Let us denote by $\GC(n)$ the subspace of $\fGC(n)$ which is 
spanned by isomorphism classes of even graphs $\G$ satisfying these 
technical conditions: 
\begin{itemize}

\item $\G$ is connected;

\item each vertex of $\G$ has valency $\ge 3$. 

\end{itemize}

We denote by $\GC_{1ve}(n)$ the subspace of $\GC(n)$ 
which is spanned by isomorphism classes of even $1$-vertex irreducible 
graphs $\G$. Finally, we set 
\begin{equation}
\label{GC}
\GCo : = \bigoplus_{n \ge 1} \GC(n), \qquad 
\GC : = \prod_{n \ge 1} \GC(n),
\end{equation}
\begin{equation}
\label{GC-1ve}
\GC_{1ve}^{\oplus} : = \bigoplus_{n \ge 1} \GC_{1ve}(n), \qquad 
\GC_{1ve} : = \prod_{n \ge 1} \GC_{1ve}(n).
\end{equation} 

We claim that 
\begin{prop}
\label{prop:GC-GC-1ve}
\begin{itemize}

\item If $\G$ is a connected (even) graph in $\gra^r_n$ with $r \ge 1$, then   
\begin{equation}
\label{pa-fGC-simpler}
\pa \G =  - (-1)^{|\G|}  \sum_{i=1}^n \ga_i \,,
\end{equation}
where $\ga_i$ is obtained from the linear combination $\G \circ_i \G^{un}_{\ed}$
by discarding all graphs in which either vertex $i$ or vertex 
$i+1$ has valency $1$.

\item If $\G$ is a connected (even) graph with all its vertices having 
valencies $\ge 3$, then 
\begin{equation}
\label{pa-GC}
\pa \G  =  - (-1)^{|\G|} \sum_{i=1}^n \,  \ga^{\ge 3}_i\,,
\end{equation}
where $\ga^{\ge 3}_i$ is obtained from the linear combination  $\G \circ_i \G_{\ed}$
by discarding all graphs which have a univalent or a bivalent vertex. 

\item The subspaces $\GCo$ and $\GCo_{1ve}$ 
(resp. $\GC$ and $\GC_{1ve}$) are dg Lie subalgebras of $\fGCo$
(resp. $\fGC$). In particular,  $\GCo_{1ve}$ (resp. $\GC_{1ve}$) is 
a subcomplex of $\fGCo$ (resp. $\fGC$). 

\end{itemize}
\end{prop}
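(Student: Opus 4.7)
The strategy for parts 1 and 2 is to expand $\pa \G = [\G^{un}_{\ed}, \G] = \G^{un}_{\ed} \bullet \G - (-1)^{|\G|} \G \bullet \G^{un}_{\ed}$ and determine which terms survive in $\fGCo$, mirroring the argument sketched for Proposition \ref{prop:dfGC}. The term $\G^{un}_{\ed} \bullet \G$ is a sum over $i \in V(\G)$ of graphs where the unused vertex of $\G^{un}_{\ed}$ becomes a univalent pendant attached to vertex $i$ of $\G$; the prefactor $\tfrac{1}{2}$ in $\G^{un}_{\ed}$ absorbs the two contributions from $\circ_1$ and $\circ_2$. A short comparison of edge-label conventions shows that $\G^{un}_{\ed} \bullet \G$ equals, up to a cyclic reshuffle of edge labels with sign $(-1)^{r} = (-1)^{|\G|}$, the sum of the univalent summands of $\G \bullet \G^{un}_{\ed}$. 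Combining in the commutator yields the cancellation, so $\pa \G = -(-1)^{|\G|} \sum_i \ga_i$ as claimed in \eqref{pa-fGC-simpler}.

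For part 2 I must additionally show that, when $\G \in \GC$, the bivalent terms in $\G \bullet \G^{un}_{\ed}$ vanish in $\fGCo$. A bivalent configuration in $\G \circ_i \G^{un}_{\ed}$ singles out one edge $e = \{i,j\}$ at $i$ to place (together with the new edge $\underline{r{+}1}$) on one side of the insertion; this produces the graph obtained from $\G$ by subdividing $e$, with the original label $\ell$ of $e$ on the half adjacent to $j$ and the new label $r{+}1$ on the half adjacent to the remaining connections of $i$. The analogous bivalent contribution from $\G \circ_j \G^{un}_{\ed}$ (singling out the same $e$) yields the same subdivided graph, but with the labels $\ell$ and $r{+}1$ on the opposite halves. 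Since this interchange is a single transposition of edge labels, the two terms differ by a sign and cancel pairwise; summing over all edges of $\G$ gives zero. The absence of univalent/bivalent vertices in $\G$ itself ensures no other sources of such terms, so $\pa \G = -(-1)^{|\G|} \sum_i \ga_i^{\ge 3}$.

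For part 3, closure of $\GCo$ and $\GC$ under $\bullet$ is immediate: replacing a vertex $i$ of $\G$ by $\ti{\G}$ preserves the condition that every vertex has valency $\ge 3$ (vertices of $\ti{\G}$ already have valency $\ge 3$ within $\ti{\G}$ and only gain edges, while vertices of $\G$ other than $i$ are unchanged) and preserves connectedness, while closure under $\pa$ follows from part 2. For $\GCo_{1ve}$ and $\GC_{1ve}$, $\bullet$-closure can fail, but the non-$1$-ve contributions to $\G \bullet \ti{\G}$ are precisely those in which all external edges at the replaced vertex of $\G$ are sent to a single vertex of $\ti{\G}$, producing a graph that glues $\G$ and $\ti{\G}$ at one common vertex. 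These terms match, up to the shuffle sign $(-1)^{rq} = (-1)^{|\G||\ti{\G}|}$ from reordering the edge labels of $\G$ past those of $\ti{\G}$, the analogous cut-vertex gluings in $\ti{\G} \bullet \G$, so they cancel in the commutator $[\G, \ti{\G}]$. Finally, every vertex-split graph appearing in $\pa \G$ for $\G \in \GCo_{1ve}$ remains $1$-vertex irreducible: removing any old vertex $j \ne i$ preserves connectivity because $\G$ is $1$-ve and the two new vertices remain linked by the new edge, while removing either new vertex leaves the rest connected because the other new vertex is attached by at least two edges (the bivalent case having been excluded by part 2) to vertices of the already connected graph $\G - i$. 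The main obstacle throughout is careful sign bookkeeping, particularly in the pairwise bivalent cancellation of part 2 and the cut-vertex cancellation of part 3.
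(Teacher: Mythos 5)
Your proposal is correct and follows essentially the same route as the paper: the univalent terms of $\G\bullet\G_{\ed}$ are cancelled by $\G_{\ed}\bullet\G$ in the commutator, the bivalent terms pair off as the two edge-labelings of each subdivided edge (the paper's observation that $\G^-_q=-\G^+_q$ in coinvariants), and the cut-vertex terms of $\G\bullet\ti{\G}$ cancel against those of $\ti{\G}\bullet\G$. The only point you gloss over that the paper at least flags (in a footnote, also leaving the details to the reader) is the separate treatment of the bivalent terms arising from a loop based at the split vertex, where your ``subdivide an edge $\{i,j\}$ with $i\neq j$'' picture does not apply; and your direct check that every term of $\pa\G$ is $1$-vertex irreducible is a harmless substitute for the paper's shorter argument that closure under $\pa$ follows from the Lie-subalgebra property together with the fact that $\G_{\ed}$ is itself $1$-vertex irreducible.
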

\begin{proof}
The proof of \eqref{pa-fGC-simpler} is almost identical to the proof of \eqref{pa-dfGC-simpler}. So we skip it. 

It is easy to see that the subspaces 
$$
\GCo \subset \fGCo, \qquad \GC \subset \fGC
$$
are closed with respect to the binary operation \eqref{bullet-fGCo}. 
Hence $\GCo$ (resp. $\GC$) are Lie subalgebras of $\fGCo$ (resp. $\fGC$). 

Let $\G$ be a connected (even) graph in $\gra^r_n$ whose all vertices have 
valencies $\ge 3$ (hence $r > 1$). It is clear that every graph in $\pa \G$ is connected. 

Let $1 \le q \le r$ and the edge labeled by $\und{q}$ in $\G$ connects two distinct vertices 
$i$ and $j$ with $i < j$. 
We denote by $\G^+_q$ and $\G^-_q$  the elements of $\gra^{r+1}_{n+1}$ which are 
obtained from $\G$ via replacing the edge $\und{q}$ by two edges which connect the additional 
vertex $n+1$ to $i$ and $j$ respectively. For $\G^+_q$, the edge connecting $n+1$ to $j$ is labeled 
by $\und{q}$ and the edge connecting $n+1$ to $i$ is labeled by $\und{r+1}$. 
For $\G^-_q$, the edge connecting $n+1$ to $j$ is labeled 
by $\und{r+1}$ and the edge connecting $n+1$ to $i$ is labeled by $\und{q}$. 
(See figure \ref{fig:G-q-pm} for the illustration.)
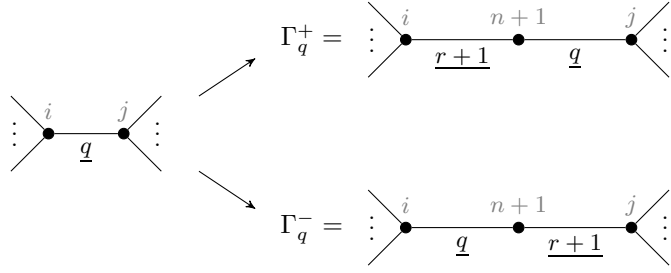
\begin{figure}[htp]
\centering 
\begin{tikzpicture}[scale=0.5, >=stealth']
\tikzstyle{w}=[circle, draw, minimum size=4, inner sep=1]
\tikzstyle{b}=[circle, draw, fill, minimum size=4, inner sep=1]
\node [b] (bi) at (0,0) {};
\draw (0,0.6) node[anchor=center,  color=gray] {{\small $i$}};
\draw (1,-0.5) node[anchor=center] {{\small $\und{q}$}};
\node [b] (bj) at (2,0) {};
\draw (2,0.6) node[anchor=center,  color=gray] {{\small $j$}};
\draw (bi) edge (bj);
\draw (bi) edge (-1,1) edge (-1,-1);
\draw (-0.9,0.2) node[anchor=center] {{\small $\vdots$}};
\draw (bj) edge (3,1) edge (3,-1);
\draw (2.9,0.2) node[anchor=center] {{\small $\vdots$}};
\draw [->] (4, 1) -- (5.5,2);
\draw [->] (4, -1) -- (5.5, -2);
\begin{scope}[shift={(9.5,2.5)}]
\draw (-2.5,0) node[anchor=center] {{$\G^+_{q} = $}};
\node [b] (bi) at (0,0) {};
\draw (0,0.6) node[anchor=center,  color=gray] {{\small $i$}};
\draw (1.5, -0.5) node[anchor=center] {{\small $\und{r+1}$}};
\node [b] (bn1) at (3,0) {};
\draw (3,0.6) node[anchor=center,  color=gray] {{\small $n+1$}};
\draw (4.5, -0.5) node[anchor=center] {{\small $\und{q}$}};
\node [b] (bj) at (6,0) {};
\draw (6,0.6) node[anchor=center,  color=gray] {{\small $j$}};
\draw (bn1) edge (bi) edge (bj);
\draw (bi) edge (-1,1) edge (-1,-1);
\draw (-0.9,0.2) node[anchor=center] {{\small $\vdots$}};
\draw (bj) edge (7,1) edge (7,-1);
\draw (6.9,0.2) node[anchor=center] {{\small $\vdots$}};
\end{scope}
\begin{scope}[shift={(9.5,-2.5)}]
\draw (-2.5,0) node[anchor=center] {{$\G^-_{q} = $}};
\node [b] (bi) at (0,0) {};
\draw (0,0.6) node[anchor=center,  color=gray] {{\small $i$}};
\draw (1.5, -0.5) node[anchor=center] {{\small $\und{q}$}};
\node [b] (bn1) at (3,0) {};
\draw (3,0.6) node[anchor=center,  color=gray] {{\small $n+1$}};
\draw (4.5, -0.5) node[anchor=center] {{\small $\und{r+1}$}};
\node [b] (bj) at (6,0) {};
\draw (6,0.6) node[anchor=center,  color=gray] {{\small $j$}};
\draw (bn1) edge (bi) edge (bj);
\draw (bi) edge (-1,1) edge (-1,-1);
\draw (-0.9,0.2) node[anchor=center] {{\small $\vdots$}};
\draw (bj) edge (7,1) edge (7,-1);
\draw (6.9,0.2) node[anchor=center] {{\small $\vdots$}};
\end{scope}
\end{tikzpicture}
\caption{Producing $\G^+_q$ and $\G^-_q$ from $\G$} \label{fig:G-q-pm}
\end{figure}

According to \eqref{pa-fGC-simpler}, 
$$
\pa(\G) ~ + ~(-1)^{|\G|} \sum_{i=1}^n \,  \ga^{\ge 3}_i ~ = ~
-(-1)^{|\G|} \sum_{q} (\G^+_q + \G^-_q), 
$$
where the summation goes over all edges with distinct end 
points\footnote{The unwanted terms in $\ga_i$ should be analyzed separately in the case 
when $\G$ has a loop based at $i$. This analysis is straightforward and we leave it to the reader.}.

On the other hand, $\G^-_q = - \G^+_q$ in the space of coinvariants. 
Thus \eqref{pa-GC} follows. In particular, the subspaces $\GCo$ and $\GC$ are closed 
with respect to the differential $\pa$. 

Let $\G$ and $\ti{\G}$ be (connected) $1$-vertex irreducible graphs. 
It is clear that graphs with a cut vertex in  
$$
\G \circ_i \ti{\G}
$$
are obtained only when we connect all edges, which were adjacent to vertex $i$, 
to the same vertex of $\ti{\G}$.  It is not hard to see that all such graphs cancel 
each other in the sum
$$
\G \bullet \ti{\G} - (-1)^{|\G| |\ti{\G}|} \ti{\G} \bullet \G. 
$$
Thus,  $\GCo_{1ve}$ (resp.  $\GC_{1ve}$) is closed with respect to the bracket 
in $\fGCo$ (resp. $\fGC$). 

Since $\G^{un}_{\ed}$ is $1$-vertex irreducible, $\GCo_{1ve}$ and $\GC_{1ve}$ 
are also closed with respect to the differential.   
\end{proof}

Equation \eqref{pa-GC} implies that 
\begin{cor}
\label{cor:3-valent}
Any linear combination of connected trivalent graphs is a cocycle in $\GC$. \qed
\end{cor}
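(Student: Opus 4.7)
The plan is to apply equation \eqref{pa-GC} from Proposition \ref{prop:GC-GC-1ve} and observe a simple valency count.  By linearity, it suffices to show that every connected trivalent graph $\G \in \gra^r_n$ (that is even, since odd graphs already represent $0$) satisfies $\pa \G = 0$.  Since every vertex of $\G$ has valency $\ge 3$, equation \eqref{pa-GC} gives
$$
\pa \G \;=\; -(-1)^{|\G|} \sum_{i=1}^n \ga^{\ge 3}_i,
$$
where $\ga^{\ge 3}_i$ is obtained from $\G \circ_i \G_{\ed}$ by throwing away any graph containing a univalent or bivalent vertex.  So it is enough to argue that each $\ga^{\ge 3}_i$ vanishes.

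For this, I would examine what $\G \circ_i \G_{\ed}$ looks like locally at the insertion site.  Deleting vertex $i$ from $\G$ leaves exactly $3$ free edge ends (a loop at $i$, if present, contributes its $2$ ends here, and the remaining adjacent edge contributes $1$).  Inserting $\G_{\ed}$ replaces vertex $i$ by two new vertices $a,b$ joined by a single edge, and the operation $\circ_i$ sums over all ways of attaching the $3$ free ends to $\{a,b\}$.  If $k$ ends are attached to $a$, then the valencies of $a$ and $b$ in the resulting graph are $1+k$ and $1+(3-k) = 4-k$, and these always sum to $5$.

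The key observation is now immediate: for any distribution $k \in \{0,1,2,3\}$, one of $1+k$ and $4-k$ is at most $2$, so one of the two new vertices is either univalent or bivalent.  Hence every single graph appearing in $\G \circ_i \G_{\ed}$ is discarded when forming $\ga^{\ge 3}_i$, which gives $\ga^{\ge 3}_i = 0$ for every $i$.  Therefore $\pa \G = 0$, and the corollary follows by linearity.  There is no real obstacle here; the only small subtlety worth checking is the loop-at-$i$ case, and indeed a loop contributes both of its endpoints to the set of free ends, so the count of $3$ free ends (and the valency sum $5$) remains correct.
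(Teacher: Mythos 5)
Your proof is correct and is essentially the argument the paper intends: the corollary is stated as an immediate consequence of equation \eqref{pa-GC}, and the valency count you carry out (three free edge-ends distributed over the two new vertices, whose valencies then sum to $5$, forcing one of them to be $\le 2$) is exactly the implicit reason every term of $\G \circ_i \G_{\ed}$ is discarded. Your check of the loop-at-$i$ case is also consistent with how the paper handles loops.
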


\begin{remark}
\label{rem:GC-Kontsevich}
It is the graph complex  $(\GC_{1ve}, \pa)$ which was introduced in 
\cite[Section 5.2]{K-conj}. In this paper, we refer to $(\GC_{1ve}, \pa)$ 
as \emph{Kontsevich's graph complex}.
\end{remark}

\subsubsection{$\fGC$ as a subcomplex of $\dfGC$} 

Let $\G$ be an element in $\dgra^r_n$\,.  
We denote by $\rho_j(\G)$ the graph which is obtained from 
$\G$ by changing the direction of the edge with label $\und{j}$.

It is convenient to draw the linear combination
$\G + \rho_j(\G)$ as a graph which is obtained from 
$\G$ by forgetting the direction of the edge with label $\und{j}$. 
For example, 
\begin{equation}
\label{G-un-ed}
\begin{tikzpicture}[scale=0.5, >=stealth']
\tikzstyle{w}=[circle, draw, minimum size=4, inner sep=1]
\tikzstyle{b}=[circle, draw, fill, minimum size=4, inner sep=1]
\node [b] (b1) at (0,0) {};
\draw (0,0.6) node[anchor=center,  color=gray] {{\small $1$}};
\node [b] (b2) at (2,0) {};
\draw (2,0.6) node[anchor=center,  color=gray] {{\small $2$}};
\draw (b1) edge (b2);
\draw (3.5,0.1) node[anchor=center] {{$: =$}};
\begin{scope}[shift={(5,0)}]
\node [b] (b1) at (0,0) {};
\draw (0,0.6) node[anchor=center,  color=gray] {{\small $1$}};
\node [b] (b2) at (2,0) {};
\draw (2,0.6) node[anchor=center,  color=gray] {{\small $2$}};
\draw [->] (b1) -- (b2);
\draw (3.5,0.1) node[anchor=center] {{$+$}};
\end{scope}
\begin{scope}[shift={(9.7,0)}]
\node [b] (b1) at (0,0) {};
\draw (0,0.6) node[anchor=center,  color=gray] {{\small $1$}};
\node [b] (b2) at (2,0) {};
\draw (2,0.6) node[anchor=center,  color=gray] {{\small $2$}};
\draw [->] (b2) -- (b1);
\end{scope}
\end{tikzpicture}
\end{equation}

Similarly, if the graph $\G'$ is obtained from 
$\G$ by forgetting the directions on the edges with 
labels $\und{j_1}, \dots, \und{j_p} \in \{\und{1}, \dots, \und{r}\}$, then 
$\G'$ denotes the sum 
$$
\G' ~~ = ~~ \sum_{(k_1, \dots, k_p)\in  \{0,1\}^p} ~
(\rho_{j_1})^{k_1} (\rho_{j_2})^{k_2} \dots (\rho_{j_p})^{k_p}\, (\G).
$$
For example,
\begin{equation}
\label{tri}
\begin{tikzpicture}[scale=0.5, >=stealth']
\tikzstyle{w}=[circle, draw, minimum size=4, inner sep=1]
\tikzstyle{b}=[circle, draw, fill, minimum size=4, inner sep=1]
\node [b] (b2) at (0,0) {};
\node [b] (b3) at (1,1) {};
\node [b] (b1) at (2,0) {};
\draw (0.2,0.8) node[anchor=center, color=gray] {{\small $\und{1}$}};
\draw (1.8,0.8) node[anchor=center, color=gray] {{\small $\und{2}$}};
\draw (b2) edge (b3);
\draw (b3) edge (b1);
\end{tikzpicture}
\quad
=
\quad
\begin{tikzpicture}[scale=0.5, >=stealth']
\tikzstyle{w}=[circle, draw, minimum size=4, inner sep=1]
\tikzstyle{b}=[circle, draw, fill, minimum size=4, inner sep=1]
\node [b] (b2) at (0,0) {};
\node [b] (b3) at (1,1) {};
\node [b] (b1) at (2,0) {};
\draw (0.2,0.8) node[anchor=center, color=gray] {{\small $\und{1}$}};
\draw (1.8,0.8) node[anchor=center, color=gray] {{\small $\und{2}$}};
\draw [->](b2) edge (b3);
\draw (b3) edge (b1);
\end{tikzpicture}
\quad
+
\quad
\begin{tikzpicture}[scale=0.5, >=stealth']
\tikzstyle{w}=[circle, draw, minimum size=4, inner sep=1]
\tikzstyle{b}=[circle, draw, fill, minimum size=4, inner sep=1]
\node [b] (b2) at (0,0) {};
\node [b] (b3) at (1,1) {};
\node [b] (b1) at (2,0) {};
\draw (0.2,0.8) node[anchor=center, color=gray] {{\small $\und{1}$}};
\draw (1.8,0.8) node[anchor=center, color=gray] {{\small $\und{2}$}};
\draw [<-] (b2) edge (b3);
\draw (b3) edge (b1);
\end{tikzpicture}
\end{equation} 
$$
= \quad 
\begin{tikzpicture}[scale=0.5, >=stealth']
\tikzstyle{w}=[circle, draw, minimum size=4, inner sep=1]
\tikzstyle{b}=[circle, draw, fill, minimum size=4, inner sep=1]
\node [b] (b2) at (0,0) {};
\node [b] (b3) at (1,1) {};
\node [b] (b1) at (2,0) {};
\draw (0.2,0.8) node[anchor=center, color=gray] {{\small $\und{1}$}};
\draw (1.8,0.8) node[anchor=center, color=gray] {{\small $\und{2}$}};
\draw [->] (b2) edge (b3);
\draw [->](b3) edge (b1);
\end{tikzpicture}
\quad
+
\quad
\begin{tikzpicture}[scale=0.5, >=stealth']
\tikzstyle{w}=[circle, draw, minimum size=4, inner sep=1]
\tikzstyle{b}=[circle, draw, fill, minimum size=4, inner sep=1]
\node [b] (b2) at (0,0) {};
\node [b] (b3) at (1,1) {};
\node [b] (b1) at (2,0) {};
\draw (0.2,0.8) node[anchor=center, color=gray] {{\small $\und{1}$}};
\draw (1.8,0.8) node[anchor=center, color=gray] {{\small $\und{2}$}};
\draw [->] (b2) edge (b3);
\draw [<-](b3) edge (b1);
\end{tikzpicture}
\quad
+
\quad
\begin{tikzpicture}[scale=0.5, >=stealth']
\tikzstyle{w}=[circle, draw, minimum size=4, inner sep=1]
\tikzstyle{b}=[circle, draw, fill, minimum size=4, inner sep=1]
\node [b] (b2) at (0,0) {};
\node [b] (b3) at (1,1) {};
\node [b] (b1) at (2,0) {};
\draw (0.2,0.8) node[anchor=center, color=gray] {{\small $\und{1}$}};
\draw (1.8,0.8) node[anchor=center, color=gray] {{\small $\und{2}$}};
\draw [<-] (b2) edge (b3);
\draw [->](b3) edge (b1);
\end{tikzpicture}
\quad
+
\quad
\begin{tikzpicture}[scale=0.5, >=stealth']
\tikzstyle{w}=[circle, draw, minimum size=4, inner sep=1]
\tikzstyle{b}=[circle, draw, fill, minimum size=4, inner sep=1]
\node [b] (b2) at (0,0) {};
\node [b] (b3) at (1,1) {};
\node [b] (b1) at (2,0) {};
\draw (0.2,0.8) node[anchor=center, color=gray] {{\small $\und{1}$}};
\draw (1.8,0.8) node[anchor=center, color=gray] {{\small $\und{2}$}};
\draw [<-] (b2) edge (b3);
\draw [<-] (b3) edge (b1);
\end{tikzpicture}
\quad .
$$
This way, we may view undirected graphs as well as graphs with both directed and 
undirected edges as vectors in $\dfGCo$ (and $\dfGC$). 

This identification is compatible with the binary operations \eqref{bullet} and \eqref{bullet-fGCo}.
So we will view $\GC_{1ve} \subset \GC \subset \fGC$
(resp.  $\GCo_{1ve} \subset \GCo \subset \fGCo$) as Lie 
subalgebras of $\dfGC$ (resp. $\dfGCo$). 
In addition equation \eqref{G-un-ed} implies 
that\footnote{From now on, we will drop the superscript $un$ in $\G^{un}_{\ed}$.} 
$\G^{un}_{\ed} = \G_{\ed}$.
So $\GC_{1ve} \subset \GC \subset \fGC$
(resp.  $\GCo_{1ve} \subset \GCo \subset \fGCo$) are, in fact, 
{\it dg} Lie subalgebras of $\dfGC$ (resp. $\dfGCo$).

\subsection{$\dfGC$ as the convolution Lie algebra}
\label{sec:dfGC-Conv}

According to\footnote{In \cite{stable}, the author considers directed graphs \und{without loops}. 
It is easy to see that, after adding loops, $\dGra$ is still an operad.} 
\cite[Section 3]{stable}, the graded vector spaces $(n \ge 1)$
\begin{equation}
\label{dGra-n}
\dGra(n) : =  \bigoplus_{r \ge 0} ~  \big( \bs^{ -r }  
\span_{\bbK}(\dgra_{n}^r) \otimes  \sgn_{r} \big)_{\bbS_r } 
\end{equation}
assemble to form an operad $\dGra$ in the category $\grVect_{\bbK}$.

We denote by  $\Conv(\La^2\coCom, \dGra)$ the convolution Lie algebra  \cite[Section 4]{notes}
corresponding to the cooperad $\La^2\coCom$  Moreover, we denote by  $\Conv^{\oplus}(\La^2\coCom, \dGra)$
the following subspace of  $\Conv(\La^2\coCom, \dGra)$:
\begin{equation}
\label{Conv-oplus}
\Conv^{\oplus} (\La^2\coCom, \dGra) : = \bigoplus_{n=1}^{\infty} \Hom_{\bbS_n} \big(\La^2\coCom(n), \dGra(n) \big). 
\end{equation}

It is clear that
\begin{equation}
\label{Conv-dGra-oplus}
\Conv^{\oplus} (\La^2\coCom, \dGra) = \bigoplus_{n=1}^{\infty} \bs^{2n-2} \big( \dGra(n) \big)^{\bbS_n}
\end{equation}
and
\begin{equation}
\label{Conv-dGra}
\Conv(\La^2\coCom, \dGra) = \prod_{n=1}^{\infty} \bs^{2n-2} \big( \dGra(n) \big)^{\bbS_n}\,.
\end{equation}

Since the space of invariants  $\big( \dGra(n) \big)^{\bbS_n}$ can be identified 
with the quotient space of coinvariants $\big( \dGra(n) \big)_{\bbS_n}$ via the 
isomorphism
$$
\Av (v) : = \sum_{\si \in \bbS_n} \si(v) ~:~ \big( \dGra(n) \big)_{\bbS_n}  ~\to~ \big( \dGra(n) \big)^{\bbS_n}\,,
$$
we have the obvious isomorphisms of graded vector spaces 
\begin{equation}
\label{Av-gives}
\dfGC~ \stackrel{\cong}{\longrightarrow}~ \Conv(\La^2\coCom, \dGra), \qquad 
\dfGCo ~ \stackrel{\cong}{\longrightarrow}~ \Conv^{\oplus}(\La^2\coCom, \dGra).
\end{equation}

Due to \cite[Proposition C.2]{DeligneTw}, the isomorphisms in \eqref{Av-gives}
send the bracket \eqref{dfGCo-Lie} to the Lie bracket on  $\Conv(\La^2\coCom, \dGra)$
and  $\Conv^{\oplus}(\La^2\coCom, \dGra)$, respectively. Thus equation  \eqref{dfGCo-Lie}
indeed defines a Lie bracket and $\dfGC$ (resp. $\dfGCo$) can be identified with 
$\Conv(\La^2\coCom, \dGra)$ (resp.   $\Conv^{\oplus}(\La^2\coCom, \dGra)$).

\section{The main theorem and the outline of the proof}
\label{sec:thm-proof}

Let us denote by $\dfGC_{\conn}(n)$ the subspace of $\dfGC(n)$ spanned by
(even) connected graphs with exactly $n$ vertices. It is clear that
for any pair of connected graphs $\G$ and $\ti{\G}$,
every term in the linear combination 
$[\G, \ti{\G}]$ is a connected graph. So setting
\begin{equation}
\label{dfGC-conn}
\dfGC_{\conn} : = \prod_{n \ge 1}  \dfGC_{\conn}(n)
\qquad \textrm{and} \qquad
\dfGCo_{\conn} : = \bigoplus_{n \ge 1}  \dfGC_{\conn}(n)
\end{equation}
we get the dg Lie subalgebra $\dfGC_{\conn}$ of $\dfGC$ and 
the dg Lie subalgebra $\dfGCo_{\conn}$ of $\dfGCo$, respectively.

Since every disconnected graph is a union of (finitely many) connected graphs, it is clear that 
\begin{equation}
\label{dfGC-as-S-hat}
\dfGC \cong \bs^{-2}\,  \wh{S} \big( \,\bs^2  \dfGC_{\conn} \big)
\qquad
\textrm{and}
\qquad
\dfGCo \cong  \bs^{-2} \und{S}\big( \bs^{2}\, \dfGCo_{\conn}\big).
\end{equation}

Let us denote by $\GC^{\dia}_{1ve}$ the cochain complex 
\begin{equation}
\label{GC-1ve-dia}
\GC^{\dia}_{1ve} : = \GC_{1ve}  ~ \oplus ~ \bigoplus_{m \ge 0} \bbK v_{4m- 1} \,,
\end{equation}
where $v_{4 m- 1}$ is a vector of degree $4m-1$ and $\bbK v_{4m- 1}$ is considered as 
the cochain complex with the zero differential. 

Next we upgrade the embedding $\GC_{1ve} \hookrightarrow \dfGC$
to the map of cochain complexes
$$
\Psi : \GC^{\dia}_{1ve}  \to \dfGC_{\conn}
$$ 
by setting
$$
\begin{tikzpicture}[scale=0.5, >=stealth']
\tikzstyle{w}=[circle, draw, minimum size=4, inner sep=1]
\tikzstyle{b}=[circle, draw, fill, minimum size=4, inner sep=1]
\draw (-3.4,0.2) node[anchor=center] {{$\Psi(v_{-1}) : = \G_{\lp}  = $}};
\node [b] (b1) at (0,0) {};
\draw (b1) ..controls (1,1) and (-1,1) .. (b1);
\draw (9,0.2) node[anchor=center] {{and ~~ $\Psi(v_{4m-1}) : = \G^{\dia}_{4m + 1}$~~ for~~ $m \ge 1$,}};
\end{tikzpicture}
$$
where $\G^{\dia}_{4m + 1}$ is the graph shown in figure \ref{fig:4m1}.
%
%
\begin{figure}[htp]
\centering 
\begin{tikzpicture}[scale=0.5, >=stealth']
\tikzstyle{w}=[circle, draw, minimum size=4, inner sep=1]
\tikzstyle{b}=[circle, draw, fill, minimum size=4, inner sep=1]
\node [b] (b1) at (4,0) {};
\draw (4.4,0) node[anchor=center, color=gray] {{\small $5$}};
\draw (3.40,-0.84) node[anchor=center] {{\small $\und{5}$}};
\node [b] (b2) at (3.54,-1.86) {};
\draw (4,-1.86) node[anchor=center, color=gray] {{\small $6$}};
\draw (2.62,-2.32) node[anchor=center] {{\small $\und{6}$}};
\node [b] (b3) at (2.27,-3.29) {};
\draw (2.6,-3.7) node[anchor=center, color=gray] {{\small $7$}};
\draw (1.24,-3.27) node[anchor=center] {{\small $\und{7}$}};
\node [b] (b4) at (0.48,-3.97) {};
\draw (0.48,-4.5) node[anchor=center, color=gray] {{\small $8$}};
\draw (-0.42, -3.47) node[anchor=center] {{\small $\und{8}$}};
\node [b] (b5) at (-1.42,-3.74) {};
\draw (-1.5, -4.3) node[anchor=center, color=gray] {{\small $9$}};
\draw (-1.99, -2.88) node[anchor=center] {{\small $\und{9}$}};
\node [b] (b6) at (-2.99,-2.65) {};
\draw (-3.3,-3.2) node[anchor=center, color=gray] {{\small $10$}};
\draw (-3.01,-1.63) node[anchor=center] {{\small $\und{10}$}};
\node [b] (b7) at (-3.88,-0.96) {};
\draw (-4.5,-0.96) node[anchor=center, color=gray] {{\small $11$}};
\node [b] (b9) at (-2.99,2.65) {};
\draw (-4,3.2) node[anchor=center, color=gray] {{\small $4m+1$}};
\node [b] (b10) at (-1.42,3.74) {};
\draw (-1.42,4.3) node[anchor=center, color=gray] {{\small $1$}};
\draw (-0.42, 3.47) node[anchor=center] {{\small $\und{1}$}};
\node [b] (b11) at (0.48,3.97) {};
\draw (0.48,4.6) node[anchor=center, color=gray] {{\small $2$}};
\draw (1.24, 3.27) node[anchor=center] {{\small $\und{2}$}};
\node [b] (b12) at (2.27,3.29) {};
\draw (2.27,3.9) node[anchor=center, color=gray] {{\small $3$}};
\draw (2.62,2.32) node[anchor=center] {{\small $\und{3}$}};
\node [b] (b13) at (3.54,1.86) {};
\draw (4,1.86) node[anchor=center, color=gray] {{\small $4$}};
\draw (3.40,0.84) node[anchor=center] {{\small $\und{4}$}};
\draw (-4,0.2) node[anchor=center] {{\large $\vdots$}};
\draw  (b1) edge (b2);
\draw  (b2) edge (b3);
\draw  (b3) edge (b4);
\draw  (b4) edge (b5);
\draw  (b5) edge (b6);
\draw  (b6) edge (b7);
\draw  (b9) edge (b10);
\draw (b10) edge (b11);
\draw (b11) edge (b12);
\draw (b12) edge (b13);
\draw (b13) edge (b1);
\end{tikzpicture}
\caption{The graph $\G^{\dia}_{4m + 1}$} \label{fig:4m1}
\end{figure}

Due to the first isomorphism in 
\eqref{dfGC-as-S-hat}, the map $\Psi$ upgrades further to the map of cochain complexes
\begin{equation}
\label{Psi}
\Psi :  \bs^{-2} \wh{S} \big( \,\bs^2 \GC^{\dia}_{1ve}  \, \big) ~~\to~~ \dfGC. 
\end{equation}
Moreover, the restriction of $\Psi$ to 
\begin{equation}
\label{und-S-stuff}
\bs^{-2} \und{S} \big( \,\bs^2 \GCo_{1ve}  ~ \oplus ~ \bigoplus_{m \ge 0} \bs^{2}\, \bbK v_{4m- 1}   \, \big)
\end{equation}
gives us the map of cochain complexes
\begin{equation}
\label{Psi-oplus}
\Psi^{\oplus} \,:\, 
\bs^{-2} \und{S} \big( \,\bs^2 \GCo_{1ve}  ~ \oplus ~ \bigoplus_{m \ge 0} \bs^{2}\, \bbK v_{4m- 1}   \, \big) 
~\to~ \dfGC^{\oplus}\,.
\end{equation}

In this paper, we give a careful proof of the following statements about
the full directed graph complex $\dfGC$ and its subcomplex $\dfGCo$: 
%
%
\begin{thm}
\label{thm:main}
The map \eqref{Psi} is a quasi-isomorphism of cochain complexes. 
\end{thm}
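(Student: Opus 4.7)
The plan is to deduce Theorem \ref{thm:main} from the uncompleted version (Theorem \ref{thm:dfGCo}) via a straightforward filtration argument, and then to reduce Theorem \ref{thm:dfGCo} in stages to the two key quasi-isomorphisms announced in Sections \ref{sec:dfGCo-conn-ge3} and \ref{sec:GC1ve-to-GC}. Both the source and target of $\Psi$ decompose as products indexed by the total number of vertices, and $\Psi$ preserves this grading; the differential preserves vertex count as well, so cohomology commutes with this product, and Theorem \ref{thm:main} follows from the corresponding statement for $\Psi^{\oplus}$.

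Next I would use the exponential/symmetric-algebra identification
$$\dfGCo ~\cong~ \bs^{-2}\, \und{S}\big( \bs^{2}\, \dfGCo_{\conn} \big),$$
and the analogous identification on the source, to reduce $\Psi^{\oplus}$ to its restriction
$$\Psi_{\conn} : \GCo_{1ve} \oplus \bigoplus_{m \ge 0} \bbK\, v_{4m-1} ~\longrightarrow~ \dfGCo_{\conn}.$$
Because we work in characteristic zero, the truncated symmetric algebra functor preserves quasi-isomorphisms of cochain complexes, so $\Psi^{\oplus}$ is a quasi-isomorphism if and only if $\Psi_{\conn}$ is.

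The third step is to split the target $\dfGCo_{\conn}$ according to the maximal valency. A connected even directed graph either has at least one vertex of valency $\ge 3$, in which case it belongs to the subcomplex $\dfGCo_{\conn, \ge 3}$, or all its vertices are of valency $\le 2$, in which case it is a polygon, a path, the loop $\G_{\lp}$, or the edge $\G_{\ed}$. By Proposition \ref{prop:dfGC} the differential does not create valency-$1$ vertices; this can be used to set up a filtration (for instance by the number of bivalent vertices, or equivalently by the ``core'' obtained upon contracting maximal bivalent chains) whose associated graded splits $\dfGCo_{\conn}$ into the valency-$\ge 3$ part and the polygon/path part. The computation of the polygon/path piece, relegated to Appendix \ref{app:polygons-paths}, identifies its cohomology as $\bbK[\G_{\lp}] \oplus \bigoplus_{m \ge 1} \bbK[\G^{\dia}_{4m+1}]$, with all paths and even-length polygons acyclic. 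Composing this with the two main quasi-isomorphisms $\GCo_{1ve} \hookrightarrow \GCo$ (Section \ref{sec:GC1ve-to-GC}) and $\GCo \hookrightarrow \dfGCo_{\conn, \ge 3}$ (Section \ref{sec:dfGCo-conn-ge3}) then yields that $\Psi_{\conn}$ is a quasi-isomorphism.

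The hard part is of course the combined content of Sections \ref{sec:dfGCo-conn-ge3} and \ref{sec:GC1ve-to-GC}, and within the outline above, the genuinely nontrivial technical step is the polygon/path computation: one must verify that the only non-trivial surviving cocycles among valency-$\le 2$ connected graphs are $\G_{\lp}$ and the $(4m+1)$-polygons, using the anti-symmetry of edge labels to kill the even-polygon and path contributions. A secondary obstacle is constructing a filtration in step three whose associated graded honestly splits $\dfGCo_{\conn}$ into the ``$\ge 3$'' and ``$\le 2$'' pieces with converging spectral sequence; the Proposition \ref{prop:dfGC} description of $\pa$ is designed to make this work, but one must bound the valency-raising effect of the differential carefully.
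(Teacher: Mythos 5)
Your outline follows the paper's own route almost exactly: reduce the completed statement to the uncompleted one via the vertex-number grading, pass to connected graphs through the truncated symmetric algebra (legitimate in characteristic zero), split the connected part by valency, and quote the two hard quasi-isomorphisms together with the polygon/path computations of the appendix. Two points need repair, though neither is fatal. First, the differential does \emph{not} preserve the number of vertices: $\pa\big(\dfGC(n)\big)\subset\dfGC(n+1)$, so $\prod_n\dfGC(n)$ is not a product of subcomplexes and one cannot literally say that ``cohomology commutes with this product.'' The correct (and still elementary) argument, which is what Section \ref{sec:from-oplus} carries out, is that because $\pa$ raises $n$ by exactly one, both the cocycle condition $\pa\ga=0$ and the coboundary condition $\ga=\pa\te$ split componentwise; applying Theorem \ref{thm:dfGCo} to each component $\ga_n$ and then summing the resulting $\ka_n$ and $\te_n$ (these infinite sums are permitted in the completed source and target) gives surjectivity and injectivity of $H^{\bul}(\Psi)$. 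Second, no filtration or spectral sequence is needed in your third step: by Proposition \ref{prop:dfGC} the reduced differential never creates univalent vertices and never eliminates the last vertex of valency $\ge 3$, so the decomposition $\dfGCo_{\conn}=\dfGCo_{\conn,\ge3}\oplus\dfGCo_{\conn,\dia}\oplus\dfGCo_{\conn,-}$ is already a direct sum of subcomplexes on the nose (this is \eqref{dfGCo-conn-oplus}), and the ``secondary obstacle'' you flag about convergence of the associated spectral sequence simply does not arise. With these two adjustments your argument coincides with the paper's.
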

\begin{thm}
\label{thm:dfGCo}
The map \eqref{Psi-oplus} is a quasi-isomorphism of cochain complexes. 
\end{thm}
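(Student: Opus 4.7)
The plan is to decompose $\Psi^{\oplus}$ by connectedness and maximum vertex valency, reducing the problem to two embeddings $\GCo_{1ve} \hookrightarrow \GCo \hookrightarrow \dfGCo_{\conn,\ge 3}$ plus an appendix computation for low-valency cocycles.  First, using the isomorphism $\dfGCo \cong \bs^{-2}\und{S}(\bs^2 \dfGCo_{\conn})$ from \eqref{dfGC-as-S-hat} together with the corresponding decomposition of the source of $\Psi^{\oplus}$, and the fact that $\und{S}$ preserves quasi-isomorphisms over a field of characteristic zero, I would reduce the theorem to showing that the restriction
$$\Psi^{\oplus}_{\conn} \,:\, \GCo_{1ve} \,\oplus\, \bigoplus_{m \ge 0} \bbK v_{4m-1} \,\longrightarrow\, \dfGCo_{\conn}$$
is a quasi-isomorphism.

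Next I would split the target as a direct sum of subcomplexes $\dfGCo_{\conn} = \dfGCo_{\conn,\le 2} \oplus \dfGCo_{\conn,\ge 3}$, where the first summand is spanned by connected graphs all of whose vertices have valency $\le 2$ and the second by those with at least one vertex of valency $\ge 3$.  This is indeed a decomposition of cochain complexes: by Proposition~\ref{prop:dfGC}, splitting a vertex of valency $k$ produces two new vertices whose valencies sum to $k+2$ (both $\ge 2$ after applying the discarding rule), so a vertex of valency $\ge 3$ in the input forces a vertex of valency $\ge 3$ in every output term, while splitting a vertex of valency exactly $2$ yields two new vertices of valency exactly $2$ and leaves all other valencies unchanged.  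The map $\Psi^{\oplus}_{\conn}$ respects this splitting since $\G_{\lp}$ and each $\G^{\dia}_{4m+1}$ are purely bivalent, while each graph in $\GCo_{1ve}$ has every vertex of valency $\ge 3$.

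For the low-valency summand, the connected graphs in $\dfGCo_{\conn,\le 2}$ are paths, polygons, the single-vertex loop, and small hybrids.  An elementary direct computation relegated to Appendix~\ref{app:polygons-paths} shows that $H^{\bul}(\dfGCo_{\conn,\le 2})$ is one-dimensional in each of the degrees $-1$ and $4m-1$ for $m \ge 1$, spanned respectively by $[\G_{\lp}]$ and $[\G^{\dia}_{4m+1}]$.  The arithmetic behind the length constraint $4m+1$ is that the dihedral symmetries of a polygon combine with the sign representation on edge labels and with the alternating sum over all edge orientations to annihilate every polygon of length $\not\equiv 1 \pmod 4$, while paths and hybrid configurations are killed by explicit contracting homotopies.

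The principal technical work is then the composition of two embeddings $\GCo_{1ve} \hookrightarrow \GCo \hookrightarrow \dfGCo_{\conn,\ge 3}$, each of which must be shown to be a quasi-isomorphism.  For the right-hand embedding (Section~\ref{sec:dfGCo-conn-ge3}), I would filter $\dfGCo_{\conn,\ge 3}$ by a weight counting, for instance, loops plus bivalent vertices plus oriented edges, and run the associated spectral sequence so that successive pages eliminate loops, bivalent vertices, and finally edge orientations, identifying the $E_{\infty}$ page with $\GCo$.  For the left-hand embedding (Section~\ref{sec:GC1ve-to-GC}), I would filter $\GCo$ by a complexity invariant of the block decomposition (e.g.\ the number of cut vertices) and exhibit a contracting homotopy on the associated graded that absorbs each cut vertex into an adjacent $2$-connected block.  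The hardest step is this cut-vertex reduction: the sign bookkeeping under edge-label relabeling is delicate, and one must verify the spectral sequence converges --- an advantage of working in the $\oplus$-setting is that each graph has finitely many vertices, so convergence is automatic and no completion argument intervenes.
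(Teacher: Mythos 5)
Your overall architecture coincides with the paper's: reduce to the connected part via $\und{S}$, split off the valency-$\le 2$ graphs (the paper separates these further into polygons, handled by Proposition \ref{prop:dfGCo-diamond}, and path graphs, handled by Proposition \ref{prop:dfGCo-path}), and reduce the remaining summand to the two embeddings $\GCo_{1ve}\hookrightarrow\GCo\hookrightarrow\dfGCo_{\conn,\ge 3}$ of Proposition \ref{prop:GCo-dfGCo3}. Your justification of the direct-sum splitting by valency and your identification of the low-valency cohomology (one-dimensional in degrees $-1$ and $4m-1$, spanned by $[\G_{\lp}]$ and $[\G^{\dia}_{4m+1}]$) are correct.

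The two technical cores, however, are only gestured at, and one of them as literally stated would fail. For $\GCo_{1ve}\hookrightarrow\GCo$ the paper passes to the quotient $\QCo=\GCo/\GCo_{1ve}$ and filters by the number of \emph{separating edges}, not cut vertices: the associated graded then decomposes over isomorphism classes of graphs $\beth$ that have cut vertices but \emph{no} separating edges (Condition \ref{cond:beth}), and the contracting homotopy contracts the unique separating edge incident to a canonically chosen ``special'' marked vertex of the smallest island. ``Absorbing a cut vertex into an adjacent $2$-connected block'' is not a degree $-1$ operation on graphs (a homotopy must delete exactly one vertex and one edge), and it produces nothing on the summands where a cut vertex is shared by two blocks with no separating edge between them --- which is precisely the generic situation in the associated graded. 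For $\GCo\hookrightarrow\dfGCo_{\conn,\ge 3}$, your list ``loops, bivalent vertices, edge orientations'' omits the univalent vertices (antennas) that graphs in $\dfGCo_{\conn,\ge 3}$ may carry; the paper's single filtration by the number of bivalent vertices yields an associated graded that decomposes over ``frames,'' each summand being a tensor product of copies of the acyclic complex $U$ (one per antenna) and the complex $R$ (one per chain or loop), whose $H^1$ is spanned by the undirected edge $a+b$ --- this one computation simultaneously kills the antennas, collapses the bivalent chains, and erases the orientations. Neither objection invalidates your plan, but both steps require this (or equivalent) machinery before the argument closes.
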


\subsection{Theorem \ref{thm:dfGCo} implies Theorem \ref{thm:main}}
\label{sec:from-oplus}

In this section, we tacitly identify $v_{-1}$ with the (isomorphism class of the) graph $\G_{\lp}$ and $v_{4m-1}$ (for $m \ge 1$)
with the (isomorphism class of the) graph $\G^{\dia}_{4m+1}$ shown in figure \ref{fig:4m1}. We also denote by $\GC^{\dia}_{1ve}(n)$ 
the following subspace of $\fGC(n)$: 
\begin{equation}
\label{GC-1ve-dia-n}
\GC^{\dia}_{1ve}(n) : = \fGC(n) \cap \GC^{\dia}_{1ve}\,.
\end{equation}
In other words, 
$$
\GC^{\dia}_{1ve}(n)  : =
\begin{cases}
\GC_{1ve}(n) \oplus \bbK [\G^{\dia}_{4m+1}]  \qquad \textrm{if} ~~ n = 4m+1 \textrm{ for some } m \ge 1 \,, \\[0.18cm]
\, \bbK [\G_{\lp}] \quad \qquad \textrm{if} ~~ n = 1\,, \\[0.18cm]
\GC_{1ve}(n) \qquad \textrm{if} ~~ n \neq 1 \mod 4\,,
\end{cases} 
$$
where  $[\G^{\dia}_{4m+1}]$ and $[\G_{\lp}]$ denotes the isomorphism class of $\G^{\dia}_{4m+1}$ and $\G_{\lp}$, 
respectively.

\subsubsection{The map \eqref{Psi} induces a surjective map on the level of cohomology.}
Every vector $\ga \in \dfGC$ of a fixed degree $d$ can be written as an infinite sum 
$$
\ga = \sum_{n \ge 1} \ga_n\,,
$$
where $\ga_n  \in \dfGC(n)^d$. In other words, $\ga_n$ is a \emph{necessarily finite} linear combination 
of graphs with $n$ vertices and $2n-2 -d$ edges. Let us assume that $\ga$ is a cocycle in $\dfGC$. 

Since $\pa \big( \dfGC(n) \big)^d  \subset \dfGC(n+1)^{d+1}$ for every $n \ge 1$, the condition
$\pa \ga = 0$ is equivalent to 
$$
\pa (\ga_n) = 0, \qquad \forall ~ n \ge 1. 
$$

Since each $\ga_n$ belongs to $\dfGCo$, Theorem \ref{thm:dfGCo} implies that 
there exists a degree $d$ cocycles $\ka_n$ in \eqref{und-S-stuff} and degree $d-1$
vectors $\te_{n-1} \in  \dfGC(n-1)$ such that 
$$
\ga_n = \begin{cases}
\Psi(\ka_n) + \pa (\te_{n-1})  \qquad {\rm if} ~~ n \ge 2 \,, \\
\Psi(\ka_1) \qquad {\rm if} ~~ n =1\,.
\end{cases}
$$

For every $n\ge 1$, $\ka_n$ is a finite linear combination of monomials of the form
\begin{equation}
\label{ka-n-terms}
\bs^{-2}\, ( \bs^2 w_1 \, \bs^2 w_2 \,\dots\, \bs^2 w_q  ), \qquad w_j \in   \GC^{\dia}_{1ve}(n_j),  
\end{equation}
where $n_1 + n_2 + \dots + n_q = n$.
 
Therefore, since  
\begin{equation}
\label{wh-S}
\bs^{-2} \wh{S} \big( \,\bs^2 \GC^{\dia}_{1ve}  \, \big) ~ = ~
\prod_{q \ge 1} ~
\prod_{n_1, \dots, n_q \ge 1} ~
\bs^{-2}\, \big( \bs^2 \GC^{\dia}_{1ve} (n_1) 
\otimes \dots \otimes \bs^2 \GC^{\dia}_{1ve} (n_q) \big)_{\bbS_q}
\end{equation}
the vector  $\displaystyle \ka : =  \sum_{n=1}^{\infty} \ka_n$
belongs to \eqref{wh-S} (i.e. the source of \eqref{Psi}). 
The vector $\ka$ is a cocycle in \eqref{wh-S} 
and we have $\ga = \Psi(\ka)  +  \pa \te$,
where $\displaystyle \te : =  \sum_{n = 1}^{\infty} \te_{n}$.

\subsubsection{The map \eqref{Psi} induces an injective map on the level of cohomology.}

Since every vector $\ka$ in \eqref{wh-S} can written as the sum 
$
\displaystyle \ka  : = \sum_{n=1}^{\infty} \ka_n\,,
$
where $\ka_n$ is a finite linear combination of monomials of the form \eqref{ka-n-terms}, we have 
$$
\Psi(\ka_n) \in \dfGC(n) \qquad \forall ~~n \ge 1.
$$

Let us now assume that $\ka$ is a degree $d$ cocycle in \eqref{wh-S} such that 
\begin{equation}
\label{Psi-kappa-exact}
\Psi (\ka) = \pa \te, 
\end{equation}
where  $\displaystyle \te : =  \sum_{n = 1}^{\infty} \te_{n}$ and $\te_n \in \dfGC(n)$. 

Since $\pa \big(  \dfGC(n) \big) \subset  \dfGC(n+1) $ for every $n \ge 1$, equation \eqref{Psi-kappa-exact} 
is equivalent to 
\begin{equation}
\label{Psi-ka-n}
\Psi (\ka_{n}) = \pa \te_{n-1} \qquad \forall ~~ n \ge 2
\end{equation}
and
$$
\Psi(\ka_1) = 0.  
$$
In particular $\ka_1 = 0$.

Since $\ka_{n+1}$ belongs to \eqref{und-S-stuff} and $\te_n \in \dfGCo$, equation \eqref{Psi-ka-n} 
implies that $\ka_{n}$ is exact in \eqref{und-S-stuff} for every $n \ge 2$. The exactness of $\ka$ in 
\eqref{wh-S} follows.  

\subsection{The cohomology of $\dfGCo$ and $\dfGC$ and the loopless version $\dfGC^{\nl}$ of $\dfGC$}
\label{sec:loopless}
Recall that, in characteristic zero, the functor $H^{\bul}$ commutes with 
taking coinvariants (and invariants) with respect to an action of a finite group. 
Therefore, combining Theorem \ref{thm:dfGCo} 
(resp. Theorem \ref{thm:main}) with the K\"unneth theorem, we get the 
following corollaries: 
\begin{cor}
\label{cor:H-dfGCo}
For the graph complex $\dfGCo$, we have 
$$
H^{\bul} (\dfGCo) \cong 
\bs^{-2} \und{S} \big( \,\bs^2 H^{\bul}(\GCo_{1ve})  ~ \oplus ~ \bigoplus_{m \ge 0} \bs^{4m+1}\, \bbK \, \big). 
$$
\end{cor}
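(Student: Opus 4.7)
The plan is straightforward: I would simply extract the cohomology from the quasi-isomorphism $\Psi^{\oplus}$ provided by Theorem \ref{thm:dfGCo}. First, since $\Psi^{\oplus}$ is a quasi-isomorphism,
$$
H^{\bul}(\dfGCo) \;\cong\; H^{\bul}\!\left(\bs^{-2} \und{S} \big( \,\bs^2 \GCo_{1ve}  ~ \oplus ~ \bigoplus_{m \ge 0} \bs^{2}\, \bbK v_{4m- 1}   \, \big)\right),
$$
so the entire task reduces to computing the cohomology of the right-hand side.

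Next, I would push $H^{\bul}$ past the functor $\bs^{-2}\und{S}(-)$. The suspension $\bs^{-2}$ is a chain isomorphism, so it commutes with $H^{\bul}$ on the nose. For $\und{S}$, recall that $\und{S}(V) = \bigoplus_{n \ge 1} (V^{\otimes n})_{\bbS_n}$. The classical Künneth theorem (valid over the field $\bbK$ of characteristic zero) gives $H^{\bul}(V^{\otimes n}) \cong H^{\bul}(V)^{\otimes n}$, and because $\mathrm{char}\,\bbK = 0$ the exact functor of $\bbS_n$-coinvariants commutes with $H^{\bul}$. Combining these yields the natural isomorphism
$$
H^{\bul}\!\left(\und{S}(V)\right) \;\cong\; \und{S}\!\left(H^{\bul}(V)\right)
$$
for any cochain complex $V$.

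It remains to compute $H^{\bul}$ of the input $V = \bs^2 \GCo_{1ve}  \oplus \bigoplus_{m \ge 0} \bs^{2} \bbK v_{4m-1}$. Since cohomology commutes with direct sums and the summands $\bbK v_{4m-1}$ carry the zero differential, each $\bs^2 \bbK v_{4m-1}$ is its own cohomology, concentrated in a single degree $(4m-1)+2 = 4m+1$; that is, $\bs^2 \bbK v_{4m-1} \cong \bs^{4m+1}\bbK$. Hence
$$
H^{\bul}(V) \;\cong\; \bs^2 H^{\bul}(\GCo_{1ve}) \oplus \bigoplus_{m \ge 0} \bs^{4m+1}\bbK.
$$
Assembling the pieces—quasi-isomorphism of $\Psi^{\oplus}$, commutation of $\bs^{-2}\und{S}(-)$ with $H^{\bul}$, and the cohomology of the direct summands—gives precisely the claimed formula.

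There is essentially no obstacle here; the one point to mention carefully is the characteristic-zero hypothesis, which is needed to guarantee that $\bbS_n$-coinvariants preserve quasi-isomorphisms and thus commute with $H^{\bul}$. The identical argument, substituting Theorem \ref{thm:main} and the completed symmetric algebra $\wh{S}$ in place of Theorem \ref{thm:dfGCo} and $\und{S}$, proves Corollary \ref{cor:H-dfGC-intro}; one only needs to observe that the product $\prod_{n \ge 1} S^n(-)$ commutes with $H^{\bul}$ degreewise since for each fixed cohomological degree only finitely many summands $S^n$ contribute.
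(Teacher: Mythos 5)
Your argument is correct and is essentially the paper's own proof: the authors likewise combine Theorem \ref{thm:dfGCo} with the K\"unneth theorem and the fact that, in characteristic zero, $H^{\bul}$ commutes with coinvariants under a finite group action. The degree bookkeeping $\bs^2\,\bbK v_{4m-1}\cong\bs^{4m+1}\bbK$ is also right, so nothing is missing.
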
 \qed
\begin{cor}
\label{cor:H-dfGC}
For the full directed graph complex $\dfGC$, we have 
$$
H^{\bul} (\dfGC) \cong 
\bs^{-2} \wh{S} \big( \,\bs^2 H^{\bul}(\GC_{1ve})  ~ \oplus ~ \bigoplus_{m \ge 0} \bs^{4m+1}\, \bbK \, \big). 
$$
\end{cor}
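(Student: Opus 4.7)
The plan is to combine Theorem \ref{thm:main} with two standard facts: that cohomology commutes with arbitrary products of cochain complexes, and that in characteristic zero cohomology commutes with the action of a finite group (coinvariants or invariants). First I would apply Theorem \ref{thm:main} to identify $H^\bullet(\dfGC)$ with $H^\bullet\bigl(\bs^{-2} \wh{S}(\bs^2\GC^{\dia}_{1ve})\bigr)$. Since the differential of $\GC^{\dia}_{1ve} = \GC_{1ve} \oplus \bigoplus_{m\ge 0} \bbK v_{4m-1}$ is the differential of $\GC_{1ve}$ extended by zero on each summand $\bbK v_{4m-1}$, its cohomology is just $H^\bullet(\GC_{1ve}) \oplus \bigoplus_{m\ge 0} \bbK v_{4m-1}$, where $v_{4m-1}$ carries degree $4m-1$.

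Next I would unpack the symmetric algebra: by definition
\[
\wh{S}(V) \;=\; \prod_{n\ge 1} S^n(V) \;=\; \prod_{n\ge 1} (V^{\otimes n})_{\bbS_n},
\]
and this decomposition is preserved by the differential induced from $V$. Cohomology commutes with arbitrary products of cochain complexes (kernels and images are computed componentwise), so
\[
H^\bullet\!\Bigl(\,\wh{S}(V)\Bigr) \;\cong\; \prod_{n\ge 1} H^\bullet\bigl(S^n(V)\bigr).
\]
Then for each fixed $n$, the Künneth theorem (over a field) gives $H^\bullet(V^{\otimes n}) \cong H^\bullet(V)^{\otimes n}$ as $\bbS_n$-modules.

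The remaining step is to commute $H^\bullet$ past the $\bbS_n$-coinvariants. Since $\mathrm{char}(\bbK) = 0$ and $\bbS_n$ is finite, the averaging (norm) map is a chain map splitting $(V^{\otimes n})_{\bbS_n} \hookrightarrow V^{\otimes n}$, so $H^\bullet\!\bigl((V^{\otimes n})_{\bbS_n}\bigr) \cong H^\bullet(V^{\otimes n})_{\bbS_n}$, and hence
\[
H^\bullet(S^n(V)) \;\cong\; S^n\!\bigl(H^\bullet(V)\bigr).
\]
Assembling these isomorphisms, applying them to $V = \bs^2 \GC^{\dia}_{1ve}$ and conjugating by the overall $\bs^{-2}$, I obtain the desired formula; the only bookkeeping point is the degree shifts, which send each $v_{4m-1}$ to the summand $\bs^{4m+1}\bbK$ inside $\bs^2 H^\bullet(\GC^{\dia}_{1ve})$. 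There is no real obstacle here: the content of the corollary lies entirely in Theorem \ref{thm:main}, and everything that remains is formal.
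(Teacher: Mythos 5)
Your proposal is correct and follows exactly the paper's route: the paper derives Corollary \ref{cor:H-dfGC} by combining Theorem \ref{thm:main} with the K\"unneth theorem and the fact that, in characteristic zero, $H^{\bul}$ commutes with taking coinvariants under a finite group action. Your write-up simply makes explicit the intermediate steps (splitting $\wh{S}$ as a product over $n$, commuting $H^{\bul}$ past products, and the degree bookkeeping for the $v_{4m-1}$), all of which are sound.
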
 \qed

~\\

It is clear that, if $\G \in \dgra^r_n$ does not have loops, then the linear 
combination $\pa \G$ cannot involve graphs with loops.  
So we denote by $\dfGC^{\nl}$  the ``loopless versions'' of $\dfGC$, i.e. 
\begin{equation}
\label{dfGC-nl}
\dfGC^{\nl} : = \prod_{n \ge 1}  \dfGC^{\nl}(n), 
\end{equation} 
where $\dfGC^{\nl}(n)$ is the subspace of $\dfGC(n)$ which 
is spanned by graphs (with $n$ vertices) without loops. 

Let us also denote by $\dfGC^{\lp}$ the following graded vector space
\begin{equation}
\label{dfGC-lp}
\dfGC^{\lp} : = \prod_{n \ge 1}  \dfGC^{\lp}(n), 
\end{equation} 
where $\dfGC^{\lp}(n)$ is the subspace of $\dfGC(n)$ which is spanned 
by graphs with $n$ vertices and with at least one loop. 

%
%
\begin{figure}[htp]
\centering 
\begin{tikzpicture}[scale=0.5, >=stealth']
\tikzstyle{w}=[circle, draw, minimum size=4, inner sep=1]
\tikzstyle{b}=[circle, draw, fill, minimum size=4, inner sep=1]
\tikzstyle{circ}=[circle, draw, dashed, minimum size=26, inner sep=1]
\node [b] (b) at (0,0) {};
\draw (b) ..controls (2,2) and (-2,2) .. (b);
\draw (0,2) node[anchor=center] {{\small $\und{1}$}};
\draw  [<-] (b) -- (-1,-1);
\draw  [<-] (b) -- (-2,-0.5);
\draw  [->] (b) -- (1,-1);
\draw (0,-1) node[anchor=center] {{\small $\dots$}};
\draw (2.5,0.2) node[anchor=center] {{$\stackrel{\pa}{\longrightarrow}$}};
\begin{scope}[shift={(6.5,0)}]
\draw (-2,0) node[anchor=center] {{$-$}};

\node [circ] (cir) at (0,0) {~};
\node [b] (b1) at (-0.5,0) {};
\node [b] (b2) at (0.5,0) {};
\draw (b1) -- (b2);
\draw (0,-0.4) node[anchor=center] {{\small $\und{1}$}};
\draw (cir) ..controls (2,2) and (-2,2) .. (cir);
\draw (0,2) node[anchor=center] {{\small $\und{2}$}};

\draw  [<-] (cir) -- (-1.5,-1.5);
\draw  [<-] (cir) -- (-2.5,-1);
\draw  [->] (cir) -- (1.5,-1.5);
\draw (0,-1.4) node[anchor=center] {{\small $\dots$}};
\end{scope}
\begin{scope}[shift={(14.5,0)}]
\draw (-4.5,0) node[anchor=center] {{$\longrightarrow~~\displaystyle -~ \sum ~\Big($}};
\node [b] (b1) at (-1,0) {};
\draw (-1.4,0.2) node[anchor=center, color=gray] {{\small $1$}};
\node [b] (b2) at (1,0) {};
\draw (1.4,0.2) node[anchor=center, color=gray] {{\small $2$}};
\draw [->] (b1) ..controls (-0.5,0.7) and (0.5,0.7) .. (b2);
\draw (0,1) node[anchor=center] {{\small $\und{2}$}};
\draw [->] (b2) ..controls (0.5,-0.7) and (-0.5,-0.7) .. (b1);
\draw (0,-1) node[anchor=center] {{\small $\und{1}$}};
\draw  [<-] (b1) -- (-2,-1);
\draw  [->] (b1) -- (-0.8,-1.2);
\draw (-1.4,-1) node[anchor=center] {{\small $\dots$}};
\draw  [->] (b2) -- (2,-1);
\draw  [<-] (b2) -- (0.8,-1.2);
\draw (1.4,-1) node[anchor=center] {{\small $\dots$}};
\draw (3,0) node[anchor=center] {{$+$}};
\end{scope}
\begin{scope}[shift={(20.2,0)}]
\node [b] (b1) at (-1,0) {};
\draw (-1.4,0.2) node[anchor=center, color=gray] {{\small $1$}};
\node [b] (b2) at (1,0) {};
\draw (1.4,0.2) node[anchor=center, color=gray] {{\small $2$}};
\draw [<-] (b1) ..controls (-0.5,0.7) and (0.5,0.7) .. (b2);
\draw (0,1) node[anchor=center] {{\small $\und{2}$}};
\draw [<-] (b2) ..controls (0.5,-0.7) and (-0.5,-0.7) .. (b1);
\draw (0,-1) node[anchor=center] {{\small $\und{1}$}};
\draw  [<-] (b1) -- (-2,-1);
\draw  [->] (b1) -- (-0.8,-1.2);
\draw (-1.4,-1) node[anchor=center] {{\small $\dots$}};
\draw  [->] (b2) -- (2,-1);
\draw  [<-] (b2) -- (0.8,-1.2);
\draw (1.4,-1) node[anchor=center] {{\small $\dots$}};
\draw (3,0) node[anchor=center] {{$\Big)$}};
\end{scope}
\end{tikzpicture}
\caption{Collecting terms in $\pa(\G)$ without loops} \label{fig:loop-nl}
\end{figure}
Let $\G$ be an element in $\dgra_n^r$ with exactly one loop based 
at vertex $1$ which has valency $\ge 3$. 
Figure \ref{fig:loop-nl} shows that terms without loops in 
the linear combination $\pa(\G)$ form the zero vector in $\dfGC$.

Combining this observation with the fact that $\G_{\lp}$ is a cocycle
in $\dfGC$, we conclude that the cochain complex $(\dfGC, \pa)$ splits into 
the direct sum of its subcomplexes: 
\begin{equation}
\label{dfGC-dir-sum}
\dfGC = \dfGC^{\nl} \oplus \dfGC^{\lp}\,.
\end{equation}

Let us also observe that the cochain complex \eqref{wh-S} splits into the direct sum of subcomplexes:
\begin{equation}
\label{wh-S-sum}
\bs^{-2} \wh{S} \big( \,\bs^2 \GC_{1ve}  ~ \oplus ~ \bigoplus_{m \ge 0} \bs^2 \bbK v_{4m-1}   \, \big)
=  \bs^{-2} \wh{S} \big( \bs^2 \GC^{\hs}_{1ve} \big) ~\oplus~ \Big( \bbK v_{-1} ~\oplus~
 \wh{S} \big( \bs^2 \GC^{\hs}_{1ve} \big) \otimes \bbK v_{-1}\Big),
\end{equation}
where 
$$
 \GC^{\hs}_{1ve} : =  \GC_{1ve}   \oplus \bbK v_{3} \oplus \bbK v_{7} \oplus \bbK v_{11} \oplus \dots 
$$

It is clear\footnote{Note that every $1$-vertex irreducible graph $\G$ with 
all vertices having valencies $\ge 3$ cannot have a loop.} 
that the map $\Psi$ is compatible with the splittings 
\eqref{dfGC-dir-sum} and \eqref{wh-S-sum}. 
 
Hence we proved the following statement:
\begin{prop}
\label{prop:dfGC-nl-dfGC}
The restriction of the map $\Psi$ to the subspace 
\begin{equation}
\label{wh-S-nl}
\bs^{-2} \wh{S} \big( \,\bs^2 \GC_{1ve}  \oplus \bs^2 \bbK v_{3} \oplus \bs^2 \bbK v_{7} \oplus 
\bs^2 \bbK v_{11} \oplus \dots \big) 
\end{equation}
gives us a quasi-isomorphism  
\begin{equation}
\label{Psi-nl}
\Psi^{\nl} ~:~ \bs^{-2} \wh{S} \big( \,\bs^2 \GC_{1ve}  ~ \oplus ~ \bigoplus_{m \ge 1} \bs^2 \bbK v_{4m-1}   \, \big)
 ~\to~ \dfGC^{\nl}\,.   
\end{equation}
In particular,
\begin{equation}
\label{H-dfGC-nl}
H^{\bul}(\dfGC^{\nl}) ~\cong~ 
\bs^{-2} \wh{S} \big( \,\bs^2 H^{\bul}(\GC_{1ve})  ~ \oplus ~ \bigoplus_{m \ge 1} \bs^{4m+1}\, \bbK \, \big). 
\end{equation}
\end{prop}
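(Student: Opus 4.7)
The plan is to leverage the splittings of both the source and target of $\Psi$ established in the paragraphs just before the statement. In the target, because the loop-counting operator is preserved by the differential (the only way $\pa\G$ could lose loops comes from inserting $\G_{\ed}$ into a loop-based vertex, and those terms cancel in pairs as in Figure \ref{fig:loop-nl}), we have the decomposition of cochain complexes $\dfGC = \dfGC^{\nl} \oplus \dfGC^{\lp}$. In the source, the completed symmetric algebra decomposes according to whether the generator $v_{-1}$ is used or not, giving
\begin{equation*}
\bs^{-2} \wh{S}\bigl(\bs^2 \GC_{1ve} \oplus \bigoplus_{m\ge 0} \bs^2 \bbK v_{4m-1}\bigr)
~=~
\bs^{-2}\wh{S}\bigl(\bs^2 \GC^{\hs}_{1ve}\bigr) ~\oplus~ \bigl(\bbK v_{-1} \oplus \wh{S}(\bs^2 \GC^{\hs}_{1ve}) \otimes \bbK v_{-1}\bigr).
\end{equation*}

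Next, I would observe that $\Psi$ is compatible with these two direct-sum decompositions. This is essentially tautological from the definition of $\Psi$: every element of $\GC^{\hs}_{1ve} = \GC_{1ve} \oplus \bigoplus_{m\ge 1}\bbK v_{4m-1}$ is sent to a connected $1$-vertex irreducible graph all of whose vertices have valency $\ge 3$, and such a graph contains no loop (as noted in the footnote of the excerpt); on the other hand $\Psi(v_{-1}) = \G_{\lp} \in \dfGC^{\lp}$, and any product containing $v_{-1}$ produces, under $\Psi$, a graph with a loop, hence a vector in $\dfGC^{\lp}$. Therefore $\Psi$ restricts to a morphism of cochain complexes $\Psi^{\nl}$ from the first summand of the source to $\dfGC^{\nl}$, and to a morphism from the second summand to $\dfGC^{\lp}$.

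Since Theorem \ref{thm:main} asserts that $\Psi$ itself is a quasi-isomorphism and quasi-isomorphisms between direct sums of cochain complexes restrict to quasi-isomorphisms on each summand (taking cohomology commutes with finite direct sums), both restricted maps are quasi-isomorphisms. Specializing to the first summand yields exactly the claim that $\Psi^{\nl}$ in \eqref{Psi-nl} is a quasi-isomorphism.

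For the cohomology formula \eqref{H-dfGC-nl}, I would invoke the K\"unneth theorem in the form already used for Corollaries \ref{cor:H-dfGCo} and \ref{cor:H-dfGC}: since $H^{\bul}$ commutes with the completed symmetric algebra functor $\wh{S}$ in characteristic zero (because it commutes with taking $\bbS_n$-coinvariants and with direct products of cochain complexes indexed by $n$), the cohomology of the source of $\Psi^{\nl}$ is $\bs^{-2}\wh{S}\bigl(\bs^2 H^{\bul}(\GC_{1ve}) \oplus \bigoplus_{m\ge 1}\bs^{4m+1}\bbK\bigr)$. No step here is a serious obstacle; the whole proposition is a formal consequence of Theorem \ref{thm:main} together with the splitting observation, and the only point to check with any care is the parity / loop-compatibility of $\Psi$ on generators, which is immediate from the construction.
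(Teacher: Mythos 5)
Your proposal is correct and follows essentially the same route as the paper: the paper likewise establishes the splitting $\dfGC = \dfGC^{\nl} \oplus \dfGC^{\lp}$ via the cancellation in Figure \ref{fig:loop-nl}, splits the source as in \eqref{wh-S-sum} according to whether $v_{-1}$ appears, notes that $\Psi$ is block-compatible with these decompositions (using that a $1$-vertex irreducible graph with all valencies $\ge 3$ has no loops), and then deduces the proposition formally from Theorem \ref{thm:main} together with the K\"unneth argument already used for Corollary \ref{cor:H-dfGC}. No gaps.
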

\qed

Combining Proposition \ref{prop:dfGC-nl-dfGC} with \cite[Theorem 1.1]{Thomas}, we deduce that
\begin{cor}  
\label{cor:dfGC-nl-grt1}
For the loopless version $\dfGC^{\nl}$ of the full directed graph 
complex $\dfGC$, we have 
$$
H^{0}(\dfGC^{\nl}) \cong \grt_1, \qquad \textrm{and} \qquad  H^{\le -1}(\dfGC^{\nl}) = \bfzero.
$$
$\qed$
\end{cor}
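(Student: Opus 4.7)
The plan is to treat this as essentially a bookkeeping exercise on top of the two cited inputs. First I would invoke Proposition \ref{prop:dfGC-nl-dfGC}, which identifies
$$
H^{\bul}(\dfGC^{\nl}) ~\cong~ \bs^{-2} \wh{S}(V), \qquad V := \bs^2 H^{\bul}(\GC_{1ve}) \,\oplus\, \bigoplus_{m \ge 1} \bs^{4m+1}\bbK.
$$
Next, I would cite Willwacher's theorem \cite[Theorem 1.1]{Thomas} (together with the quasi-isomorphism $\GCo_{1ve} \hookrightarrow \GCo$ proved in Section \ref{sec:GC1ve-to-GC}) to conclude that $H^0(\GC_{1ve}) \cong \grt_1$ and $H^{<0}(\GC_{1ve}) = \bfzero$. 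Substituting these facts into the graded vector space $V$ gives $V^k = \bfzero$ for $k < 2$ and $V^2 \cong H^0(\GC_{1ve}) \cong \grt_1$; the extra summands $\bs^{4m+1}\bbK$ with $m \ge 1$ contribute in degrees $5, 9, 13, \dots$ and so cannot affect what happens in low degrees.

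The remainder is a direct degree count. Since $V$ is concentrated in degrees $\ge 2$, the $n$-th symmetric power $S^n(V)$ lives in degrees $\ge 2n$; in particular, $\wh{S}(V)$ is concentrated in degrees $\ge 2$, and its degree $2$ part comes entirely from $V^2$ itself (higher symmetric powers start in degree $\ge 4$). Applying $\bs^{-2}$ shifts everything down by two, so $\bs^{-2}\wh{S}(V)$ is concentrated in degrees $\ge 0$, with
$$
\big(\bs^{-2}\wh{S}(V)\big)^0 ~=~ V^2 ~\cong~ \grt_1.
$$
Combined with the isomorphism from Proposition \ref{prop:dfGC-nl-dfGC}, this yields $H^0(\dfGC^{\nl}) \cong \grt_1$ and $H^{\le -1}(\dfGC^{\nl}) = \bfzero$, as required.

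There is no serious obstacle: the whole argument is a degree-counting argument once Proposition \ref{prop:dfGC-nl-dfGC} and Willwacher's theorem are in place. The only minor subtlety worth flagging is checking that the second summand $\bigoplus_{m \ge 1}\bs^{4m+1}\bbK$ (which, unlike in the full $\dfGC$ case, no longer includes the loop class $v_{-1}$) contributes only in strictly positive shifted degrees; this is immediate from $4m+1 \ge 5$ for $m \ge 1$ and is precisely what allows negative cohomology to vanish in the loopless setting (in contrast with $H^{-1}(\dfGC) \cong \bbK[\G_{\lp}]$ of Corollary \ref{cor:dfGC-grt}).
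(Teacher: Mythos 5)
Your proposal is correct and follows exactly the route the paper intends: the corollary is stated with a bare $\qed$ precisely because it is the degree count you carry out, applied to the isomorphism of Proposition \ref{prop:dfGC-nl-dfGC} after feeding in $H^0(\GC_{1ve}) \cong \grt_1$ and $H^{<0}(\GC_{1ve}) = \bfzero$ from \cite[Theorem 1.1]{Thomas} via the quasi-isomorphism $\GCo_{1ve} \hookrightarrow \GCo$. Your observation that the summands $\bs^{4m+1}\bbK$ with $m \ge 1$ sit in degrees $\ge 5$, so that only $\bs^2 H^0(\GC_{1ve})$ survives in the bottom degree of $\bs^{-2}\wh{S}(V)$, is exactly the point that distinguishes the loopless case from Corollary \ref{cor:dfGC-grt}.
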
  

\subsection{The version $\dfGC_d$ for an arbitrary even dimension $d$}
\label{sec:arb-dimen}

The cochain complex $\dfGC$ (resp.  $\dfGCo$) is a member of the family of 
graph complexes $\{ \dfGC_d \}_{d \in 2\bbZ}$ (resp.  $\{ \dfGCo_d \}_{d \in 2\bbZ}$)
indexed by an \emph{even} integer $d$. 

As the graded vector space, 
\begin{equation}
\label{dfGC-d-oplus}
\dfGC^{\oplus}_d ~ : =~ \bigoplus_{n \ge 1,~r \ge 0} ~  \big( \bs^{d n - d  + r (1-d) }  
\span_{\bbK}(\dgra_{n}^r) \otimes  \sgn_{r} \big)_{\bbS_n \times \bbS_r } \,.
\end{equation}

The Lie bracket on $\dfGC^{\oplus}_d$ is defined by the same formula \eqref{dfGCo-Lie}.
It is easy to see that the vector $\G_{\ed}$ (see \eqref{G-ed}) has degree $1$ in 
\eqref{dfGC-d-oplus} and satisfies the MC equation \eqref{G-ed-MC}. So we use the 
same formula (see \eqref{diff-dfGCo}) for the differential $\pa$ on \eqref{dfGC-d-oplus}. 

Just as for $\dfGCo$, we denote by $\dfGC_d(n)$ the subspace of $\dfGC^{\oplus}_d$ 
spanned by isomorphism classes of even graphs with exactly $n$ vertices and observe that 
$\pa \big( \dfGC_d(n) \big) \subset  \dfGC_d(n+1).$
So we define $\dfGC_d$ as the following completion of  $\dfGCo_d$:
\begin{equation}
\label{dfGC-d}
\dfGC_d ~ : =~ \prod_{n \ge 1}  \dfGC_d(n).  
\end{equation}
It is clear that  $\dfGC = \dfGC_2$ and  $\dfGCo = \dfGCo_2$. 

Similarly, by using undirected graphs, we define the subcomplexes 
$\fGC_d$, $\GC_d$ and $\GC_{1ve, d}$ of $\dfGC_d$ and the their 
``uncompleted'' version $\fGCo_d$, $\GCo_d$ and $\GCo_{1ve, d}$, 
respectively. 

To describe a link between $\dfGC_d$ (resp. $\dfGCo_d$) and $\dfGC$ (resp. $\dfGCo$), 
we remark that, for every even graph $\G$ of Euler characteristic $\chi$, $\pa(\G)$ is 
the linear combination of (even) graphs of the same Euler characteristic. Thus $\dfGCo_d$ splits into 
the direct sum of cochain complexes
\begin{equation}
\label{dfGCo-Euler}
\dfGCo_d ~ = ~ \bigoplus_{\chi \in \bbZ} \dfGC_{d, \chi} 
\end{equation}
and $\dfGC_d$ is isomorphic to the direct product
\begin{equation}
\label{dfGC-Euler}
\dfGC_d ~ = ~ \prod_{\chi \in \bbZ} \dfGC_{d, \chi}\,, 
\end{equation}
where $\dfGC_{d, \chi}$ is the subcomplex of $\dfGCo$ spanned by isomorphism classes 
of even graphs whose Euler characteristic is $\chi$. 
   
Note that  the degree of an even graph $\G$ can be expressed in terms of the
number of vertices $n$ and its Euler characteristic $\chi$ by the formula
\begin{equation}
\label{deg-n-chi}
\deg(\G) = n + (d-1)\chi - d. 
\end{equation}
Thus, there are finitely many (isomorphisms classes of) graphs of fixed Euler characteristic and 
fixed degree. 

Due to equation \eqref{deg-n-chi}, we have the obvious isomorphism of 
cochain complexes (for every even $d$):
\begin{equation}
\label{isom-shift}
 \dfGC_{d, \chi} \cong \bs^{(d-2) (\chi-1)} \, \dfGC_{2, \chi}\,.
\end{equation}

Using this isomorphism, we easily obtain the following generalizations of statements 
of Theorems \ref{thm:main} and  \ref{thm:dfGCo}:
\begin{thm}  
\label{thm:arb-d}
Let $d$ be any even integer and $v_{4m+1-d}$ be a symbol of degree $4m+1-d$. 
The natural embeddings
\begin{equation}
\label{arb-d-oplus}
\bs^{-d} \und{S} \big( \,\bs^d \GCo_{1ve, d}  ~ \oplus ~ \bigoplus_{m \ge 0} \bs^{d}\, \bbK v_{4m+1-d}   \, \big) 
~\hookrightarrow~ \dfGC^{\oplus}_d\,,
\end{equation}
\begin{equation}
\label{arb-d}
\bs^{-d} \wh{S} \big( \,\bs^d \GC_{1ve, d}  ~ \oplus ~ \bigoplus_{m \ge 0} \bs^{d}\, \bbK v_{4m+1-d}   \, \big) 
~\hookrightarrow~ \dfGC_d
\end{equation}
are quasi-isomorphisms of cochain complexes. 
\end{thm}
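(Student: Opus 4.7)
My plan is to reduce Theorem \ref{thm:arb-d} to the already established $d=2$ case (Theorems \ref{thm:main} and \ref{thm:dfGCo}) by using the Euler-characteristic decomposition \eqref{dfGCo-Euler}--\eqref{dfGC-Euler} together with the shift isomorphism \eqref{isom-shift}. The key point is that the Lie bracket \eqref{dfGCo-Lie} and the differential $\pa = [\G_{\ed},\,\cdot\,]$ preserve the Euler characteristic $\chi = n - r$ of a graph (since $\G_{\ed}$ has $\chi = 1$ and insertion at a vertex then subtraction of the original vertex gives $\chi + 1 - 1 = \chi$), so both the source and the target split as direct products over $\chi \in \bbZ$.

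First I would decompose the target: \eqref{dfGC-Euler} gives $\dfGC_d = \prod_\chi \dfGC_{d,\chi}$ and \eqref{isom-shift} identifies each summand with $\bs^{(d-2)(\chi-1)}\dfGC_{2,\chi}$. Next I would decompose the source of \eqref{arb-d} analogously. Since the Euler characteristic is additive on disjoint unions, a monomial $\bs^{-d}(\bs^d w_1 \cdots \bs^d w_q)$ with $w_j \in \GC_{1ve,d}$ of Euler characteristic $\chi_j$ (respectively $w_j = v_{4m_j+1-d}$, corresponding to $\G^{\dia}_{4m_j+1}$ with $\chi_j = 0$, or $v_{-1}$ corresponding to $\G_{\lp}$ with $\chi_j = 0$) sits in the $\chi$-summand for $\chi = \sum \chi_j$, and $\wh{S}$ factors as a product over such $\chi$.

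The crucial verification is that the source summand for dimension $d$ and Euler characteristic $\chi$ is the $(d-2)(\chi-1)$-shift of the corresponding $d=2$ summand. For a generating graph $w_j$ of Euler characteristic $\chi_j$, the analogue of \eqref{isom-shift} at the level of connected pieces gives $w_j = \bs^{(d-2)(\chi_j-1)} w_j^{(2)}$, and for the symbols $v_{4m+1-d} = \bs^{2-d} v_{4m-1}$, which matches the $\chi = 0$ case of the same formula. A direct degree count shows
\begin{equation*}
(q-1)d + (d-2)(\chi - q) = (d-2)(\chi-1) + 2(q-1),
\end{equation*}
so the total degree shift of the monomial equals $(d-2)(\chi-1)$ added to the $d=2$ degree, as required. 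Combined with the fact that the differential is compatible with the Euler-characteristic-preserving shift isomorphism, this yields an isomorphism of cochain complexes between the $\chi$-summand of the source of \eqref{arb-d} and the $(d-2)(\chi-1)$-shift of the $\chi$-summand of the source of \eqref{Psi}, intertwining the map $\Psi$ with its $d$-analogue. The parallel statement for \eqref{arb-d-oplus} follows from the same argument using $\und{S}$ in place of $\wh{S}$ and the splitting $\dfGCo_d = \bigoplus_\chi \dfGC_{d,\chi}$.

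The main obstacle is bookkeeping: one must carefully check that the Koszul signs arising when commuting $\bs^{d-2}$ past the symmetric-algebra multiplication (and past the $\bbS_q$-coinvariants hidden in $S^q$) match on both sides, and that the differential $\pa_d$ on $\dfGC_d$ really transports to $\pa_2$ on $\dfGC_2$ under \eqref{isom-shift}. The latter is essentially automatic since $\G_{\ed}$ has degree $1$ in every $\dfGC_d$ and $\chi(\G_{\ed}) = 1$ implies $(d-2)(\chi(\G_{\ed}) - 1) = 0$, so adjoint action by $\G_{\ed}$ commutes with the identification \eqref{isom-shift}. Once all compatibilities are checked, applying Theorem \ref{thm:main} (respectively Theorem \ref{thm:dfGCo}) in each Euler-characteristic summand concludes the proof.
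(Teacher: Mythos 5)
Your proposal is correct and follows essentially the same route as the paper: the paper's proof of Theorem \ref{thm:arb-d} consists precisely of the Euler-characteristic splittings \eqref{dfGCo-Euler}--\eqref{dfGC-Euler} and the shift isomorphism \eqref{isom-shift}, reducing to Theorems \ref{thm:main} and \ref{thm:dfGCo}; your degree count $(q-1)d+(d-2)(\chi-q)=(d-2)(\chi-1)+2(q-1)$ and the observation that $\chi(\G_{\ed})=1$ makes $\pa$ commute with the identification supply exactly the details the paper leaves implicit. (One small caution: the full Lie bracket does not preserve $\chi$ --- it satisfies $\chi([\G,\ti\G])=\chi(\G)+\chi(\ti\G)-1$ --- but only the differential needs to, and your justification correctly rests on $\G_{\ed}$ alone.)
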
  
$\qed$

\begin{remark}  
\label{rem:arb-d}
One can define families of graph complexes $\{\fGC_d\}_{d \in \bbZ}$,  $\{\GC_d\}_{d \in \bbZ}$, 
and $ \{\GC_{1ve, d}\}_{d \in \bbZ}$ indexed by an arbitrary (not necessarily even) integer $d$ and 
we refer the reader 
to \cite{A-Turchin}, \cite{hairy-stuff}, \cite{slides}, \cite{KWZ}, \cite{KWZ11}, \cite{Thomas-slides}, 
\cite{Thomas}, \cite{WZ} for more details about these families of 
graph complexes and their generalizations. 
For odd $d$, the directions on edges play a special role. So Theorem \ref{thm:arb-d}
does not have a generalization to the case when $d$ odd. 
\end{remark}

\subsection{The proof of Theorem \ref{thm:dfGCo}}
\label{sec:outline}
In this section, we deduce Theorem \ref{thm:dfGCo} from several 
auxiliary statements. The remainder of this paper is devoted to 
the proofs of these auxiliary statements. 

Recall that $\dfGC_{\conn}(n)$ is the subspace of $\dfGC(n)$ spanned by
(even) \emph{connected} graphs. Moreover, $\dfGCo_{\conn}$ is the dg Lie subalgebra of 
$\dfGCo$ introduced in \eqref{dfGC-conn}.

Let us split the cochain complex $(\dfGCo_{\conn}, \pa)$ into the following direct sum of subcomplexes: 
\begin{equation}
\label{dfGCo-conn-oplus}
\dfGCo_{\conn} ~ = ~ \dfGCo_{\conn, \ge 3} ~\oplus~ \dfGCo_{\conn, \dia}  ~\oplus~   
 \dfGCo_{\conn, -}\,, 
\end{equation}
where
\begin{itemize}
\item  $\dfGCo_{\conn, \ge 3}$ is the subcomplex spanned by (even) connected graphs with at least 
one vertex having valency $\ge 3$, 

\item $\dfGCo_{\conn, \dia}$ is the subcomplex spanned by (even) connected graphs with 
all vertices having valency $2$ (i.e. $\G_{\lp}$ and various polygons), and

\item $\dfGCo_{\conn, -}$ is the subcomplex spanned by $\G_{\bul} \in \dgra^0_1$ and
uni-bivalent (even) connected graphs, i.e. a path graphs (an example of a path graph is shown in figure \ref{fig:path}). 

\end{itemize}
\begin{figure}[htp]
\centering 
\begin{tikzpicture}[scale=0.5, >=stealth']
\tikzstyle{w}=[circle, draw, minimum size=4, inner sep=1]
\tikzstyle{b}=[circle, draw, fill, minimum size=4, inner sep=1]
\tikzstyle{circ}=[circle, draw, dashed, minimum size=26, inner sep=1]
\node [b] (b1) at (0,0) {};
\node [b] (b2) at (2,0) {};
\node [b] (b3) at (4,0) {};
\node [b] (b4) at (6,0) {};
\draw  [<-] (b1) -- (b2);
\draw [->] (b2) -- (b3);
\draw [->] (b3) -- (b4);
\end{tikzpicture}
\caption{An example of an even uni-bivalent graph} \label{fig:path}
\end{figure}
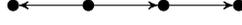

It is clear that the restriction of $\Psi^{\oplus}$ (see \eqref{Psi-oplus}) to 
$\displaystyle \bigoplus_{m \ge 0} \bbK v_{4m-1}$
gives us a chain map 
\begin{equation}
\label{Psi-polygons}
\Psi_{\dia} ~ : ~  \bigoplus_{m \ge 0} \bbK v_{4m-1} ~\to~ \dfGCo_{\conn, \dia}\,.
\end{equation}

Thus Theorem \ref{thm:dfGCo} follows from the following propositions:
\begin{prop}
\label{prop:GCo-dfGCo3}
The natural embeddings 
\begin{equation}
\label{GCo-to-dfGCo3}
\GCo  \hookrightarrow \dfGCo_{\conn, \ge 3}
\end{equation}
and 
\begin{equation}
\label{GCo1ve-to-GCo}
\GCo_{1ve}  \hookrightarrow \GCo
\end{equation}
are quasi-isomorphisms of cochain complexes. 
\end{prop}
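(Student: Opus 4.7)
My plan is to split Proposition \ref{prop:GCo-dfGCo3} into two independent quasi-isomorphism statements and attack each with a spectral sequence coming from a natural filtration.

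For the first inclusion $\GCo \hookrightarrow \dfGCo_{\conn, \ge 3}$, I would factor it through an undirected intermediate $\fGCo_{\conn, \ge 3}$, namely the subcomplex of $\fGCo$ spanned by connected undirected graphs having at least one vertex of valency $\ge 3$, and prove each step in turn. To compare $\fGCo_{\conn, \ge 3}$ with $\dfGCo_{\conn, \ge 3}$, I would decompose every directed edge into its symmetric part $(i\to j) + (j\to i)$ and its antisymmetric part $(i\to j) - (j\to i)$; the undirected subcomplex corresponds precisely to graphs all of whose edges are symmetric. Filtering by the number of antisymmetric edges, the associated graded differential becomes the ``insert a symmetric edge'' operation, and the task reduces to producing a contracting homotopy that pairs each antisymmetric edge with its reversal. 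To compare $\GCo$ with $\fGCo_{\conn, \ge 3}$, I would filter by the number of vertices of valency $\le 2$ and use the standard homotopy that either contracts a bivalent vertex along one of its two incident edges or deletes a univalent hair.

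For the second inclusion $\GCo_{1ve} \hookrightarrow \GCo$, I would exploit the biconnected decomposition: every connected graph $\G$ with all vertices of valency $\ge 3$ is built uniquely from its $1$-vertex irreducible blocks glued along cut vertices, and the incidence of blocks and cut vertices forms a tree $T(\G)$. Filtering $\GCo$ by the number of cut vertices (equivalently, by the number of blocks minus one) produces a spectral sequence whose $E_1$-page factors as $\GCo_{1ve}$-data tensored with a combinatorial ``tree complex'' that records the gluing structure. This tree complex should be acyclic outside the single-block stratum, so the spectral sequence collapses on $E_2$ to $\GCo_{1ve}$. This is essentially the argument used by Willwacher in the proof of \cite[Theorem 1.1]{Thomas}.

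The main obstacle in the first step is the bookkeeping of signs: the symmetric/antisymmetric decomposition of edges must be compatible with the coinvariants under $\bbS_n \times \bbS_r$ twisted by $\sgn_r$, and the contracting homotopy must descend from labeled graphs to isomorphism classes. The main obstacle in the second step is controlling the piece of $\pa$ that splits a single block into two new blocks (when $\G_{\ed}$ is inserted to create a fresh bridge) together with the reverse merging operation, so as to cleanly identify the tree-complex factor and prove its acyclicity away from the single-block stratum.
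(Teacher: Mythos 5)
Your proposal has two genuine gaps, one in each half.

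\textbf{First inclusion.} Your reduction of $\dfGCo_{\conn,\ge 3}$ to the undirected complex via the symmetric/antisymmetric edge decomposition does not do what you want. Note first that the inserted edge always appears symmetrized after passing to coinvariants: in $\G\circ_i\G_{\ed}$ the term where a subset $S$ of the old half-edges goes to the source is identified (by the vertex relabeling swapping the two new vertices, which carries no sign) with the reversal of the term where $S^c$ goes to the source. Hence $\pa$ adds a symmetric edge and preserves the type of every old edge, so your ``filtration'' by the number of antisymmetric edges is actually a direct-sum decomposition of the complex and the associated graded is the complex itself --- nothing is gained. Worse, the proposed contracting homotopy ``pairing each antisymmetric edge with its reversal'' cannot exist: reversing an antisymmetric edge is just multiplication by $-1$, not a homotopy for the insertion differential. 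The real reason graphs with antisymmetric edges do not contribute is the subdivision differential acting on the chains of bivalent vertices replacing each edge; in the paper this is encoded by the complex $R=(\und{T}(\cV_2),\mb_2)$, whose $H^1$ is spanned by $a+b$ while $a-b$ is not even a cocycle ($\mb_2(a-b)=2(ab-ba)$). So the directed-versus-undirected comparison cannot be decoupled from the bivalent-vertex analysis; the paper therefore filters $\dfGCo_{\conn,\ge3}$ directly by the number of bivalent vertices and identifies the associated graded, frame by frame, with coinvariants of tensor powers of the explicit complexes $U$ and $R$, whose cohomology is then computed. Your second sub-step (contracting bivalent vertices in the undirected complex) is the right idea in spirit, but ``the standard homotopy'' is exactly the part Willwacher's Appendix K leaves brief and this paper is written to make precise: the homotopy requires a coherent choice of which incident edge to contract, compatible with automorphisms and the $\sgn_r$-twist (recall that the length-two subdivision of an undirected edge is already zero by an odd symmetry).

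\textbf{Second inclusion.} The filtration by the number of cut vertices (equivalently, blocks minus one) is not compatible with the differential. Splitting a cut vertex by inserting $\G_{\ed}$ can \emph{merge} two blocks into a single $2$-connected one: for the two tetrahedra glued at a vertex $v$ of valency $6$, distributing the six half-edges as $3+3$ with edges of both tetrahedra on each side produces a graph with no cut vertex. So the block count is neither non-decreasing nor non-increasing under $\pa$, and your spectral sequence does not exist. The paper avoids this by passing to the quotient $\QCo=\GCo/\GCo_{1ve}$ and filtering by the number of \emph{separating edges}, which genuinely cannot decrease under edge insertion; the associated graded splits over isomorphism classes of graphs $\beth$ with a cut vertex but no separating edge (the possible $\G^{\red}$), and each summand $\QCo_{\beth}$ is contracted by an explicit homotopy that contracts or creates a separating edge at a canonically chosen marked vertex of the smallest island. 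Your ``tree complex acyclic outside the single-block stratum'' is plausibly what that homotopy is secretly proving, but as stated it rests on a filtration that fails. (Also, the $1$-vertex-irreducibility statement is the analogue of Conant--Gerlits--Vogtmann's cut-vertex theorem rather than part of Willwacher's proof of $H^0(\GC)\cong\grt_1$.)
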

\begin{prop}
\label{prop:dfGCo-diamond}
The chain map \eqref{Psi-polygons} is a quasi-isomorphism of cochain complexes. 
\end{prop}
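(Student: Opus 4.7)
The proof splits into two independent parts reflecting the natural decomposition
$$
\dfGCo_{\conn,\dia}\;=\;\bbK\,\G_{\lp}\;\oplus\;\mathrm{Poly}^{\bullet},
$$
where $\mathrm{Poly}^{\bullet}$ is the subcomplex spanned by directed polygons on $n\ge 2$ vertices. Since $\pa\G_{\lp}=0$ by Example~\ref{exam:G-bul-G-lp}, and no graph in $\dfGCo_{\conn,\dia}$ has degree $-2$, the first summand contributes $H^{-1}\cong\bbK[\G_{\lp}]$ immediately, matching the $v_{-1}$-piece of the target. It remains to show that $H^{\bullet}(\mathrm{Poly}^{\bullet})\cong\bigoplus_{m\ge1}\bbK v_{4m-1}$, with $v_{4m-1}$ represented by $\G^{\dia}_{4m+1}$ in degree $n-2=4m-1$.

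For the polygon part I would work with an unreduced model. Each labelled directed $n$-polygon is encoded by a cyclic word of length $n$ in the alphabet $\{+,-\}$ recording edge orientations, which identifies the pre-coinvariant space with $V^{\otimes n}$ where $V=\bbK e_+\oplus\bbK e_-$. The dihedral group $D_n$ acts on this space in the obvious way, and $\mathrm{Poly}^{n-2}$ is recovered as the signed $D_n$-coinvariants, the sign combining the $\sgn_n$-representation on edge labels with the orientation-swap $e_+\leftrightarrow e_-$ induced by reflection. The differential of Proposition~\ref{prop:dfGC} lifts to a sum of edge-insertion maps $V^{\otimes n}\to V^{\otimes(n+1)}$, inserting each of the two basis vectors into each available slot with appropriate signs.

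Next I would decompose $V^{\otimes n}$ into isotypic components under the rotation subgroup $\bbZ/n\subset D_n$ by Fourier analysis. Combining the rotation sign $(-1)^{n-1}$, the reflection sign $(-1)^{n(n-1)/2}$, and the orientation-swap contributed by reflection, only the trivial rotation-character can survive the signed coinvariants. A direct calculation on this surviving component shows that the cohomology in degree $n-2$ is at most one-dimensional, and vanishes except when $n\equiv 1\pmod 4$, i.e.\ $n=4m+1$ with $m\ge 1$. The explicit cocycle $\G^{\dia}_{4m+1}$, defined as the sum over all $2^{4m+1}$ orientations of the undirected $(4m+1)$-gon, is manifestly $D_{4m+1}$-invariant by construction, and its closedness follows from a direct computation in which the edge-insertion differential produces a telescoping sum that cancels pairwise across the full orientation-sum.

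The main technical obstacle is the sign bookkeeping that entangles the $\sgn_n$-sign on edges with both the rotation and reflection actions, and with the orientation-swap induced by reflection; this is what forces the congruence $n\equiv 1\pmod 4$. In addition, one must verify that the non-critical degrees ($n\not\equiv 1\pmod 4$) indeed have vanishing cohomology, for which an efficient route is to construct an explicit contracting homotopy by contracting each edge of a polygon in turn and pairing it with the edge-insertion differential, yielding an identity $\pa h+h\pa=\lambda_n\cdot\mathrm{id}$ on the relevant isotypic subspace, with $\lambda_n\ne 0$ precisely outside the critical dimensions.
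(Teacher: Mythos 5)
Your reduction of $\dfGCo_{\conn,\dia}$ to signed dihedral coinvariants of cyclic words in two oriented letters is exactly the model the paper builds (the complex $K_{\dia}$ of \eqref{coinvar-cyclic} followed by $\bbS_2$-coinvariants), and splitting off $\bbK\,\G_{\lp}$ is fine. The gap is in the middle: the sentence ``a direct calculation on this surviving component shows that the cohomology in degree $n-2$ is at most one-dimensional, and vanishes except when $n\equiv 1\pmod 4$'' \emph{is} the content of the proposition, and neither of the two mechanisms you offer for it is adequate. Decomposing into rotation-isotypic components buys nothing beyond passing to coinvariants, which you have already done; the surviving component still has dimension of order $2^{n}/n$. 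The conjectured identity $\pa h+h\pa=\lambda_n\cdot\id$ for an edge-contraction homotopy is unsubstantiated and structurally implausible: at the critical $n=4m+1$ the cohomology is one-dimensional inside this large space, and the mod-$4$ pattern does not come from a scalar $\lambda_n$ vanishing but from two independent sign obstructions acting on a single chain (see below). Relatedly, ``manifest $D_n$-invariance'' of the full orientation sum cannot be what makes $\G^{\dia}_{4m+1}$ a nontrivial class --- the same sum for $n=4m+3$ is equally invariant and is \emph{zero} (already the triangle of figure \ref{fig:triangle}) --- and you never rule out that $\G^{\dia}_{4m+1}$ is a coboundary.

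The missing idea, which is how the paper actually computes $H^{\bul}(K_{\dia})$, is the change of basis $x=a+b$: the differential becomes insertion of the single letter $x$, so the complex splits by the number of occurrences of $a$. For $a$-degree $r\ge 1$ the summand is the cyclic coinvariants of $(\bs\,K^{\hs})^{\otimes r}$, where $K^{\hs}$ is the acyclic two-periodic complex \eqref{K-heart}; it is therefore acyclic by the K\"unneth theorem and characteristic-zero averaging, independently of $n\bmod 4$. The $a$-degree-$0$ summand is spanned by the powers $x^{n}$ with zero induced differential, and $x^{n}$ survives the signed cyclic action iff $n$ is odd and the reflection iff $n\not\equiv 3\pmod 4$; together these give exactly $n\equiv 1\pmod 4$. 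Non-exactness of $x^{4m+1}$ is then automatic because the differential preserves $a$-degree and $x^{4m}=0$ in coinvariants. Without this (or an equivalent) reduction, your outline does not yet constitute a proof.
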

\begin{prop}
\label{prop:dfGCo-path}
The subcomplex $\dfGCo_{\conn, -}$ is acyclic. 
\end{prop}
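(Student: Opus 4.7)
The plan is to exhibit an explicit contracting homotopy on $\dfGCo_{\conn,-}$. The subcomplex is easily described: it is spanned by $\G_\bul$ in degree $0$ together with coinvariant classes of even directed paths on $n\ge 2$ vertices, each in degree $n-1$ (from $\chi = 1$ and equation~\eqref{deg-n-chi}). First I use Proposition~\ref{prop:dfGC} to pin down $\pa$ on this subcomplex. At any univalent endpoint $i$ of a path, both ways of distributing the one incident edge between the two new vertices leave one of them univalent, hence discarded, so $\gamma_i = 0$. At any interior (bivalent) vertex, only the $(1,1)$-distributions of the two incident edges keep both new vertices of valency $\ge 2$, and each such distribution yields a longer path; the $(2,0)$ and $(0,2)$ distributions leave one new vertex univalent and are discarded. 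Combined with $\pa\G_\bul=\G_\ed$ from Example~\ref{exam:G-bul-G-lp}, this confirms that $\dfGCo_{\conn,-}$ is indeed a subcomplex on which $\pa$ just interior-inserts edges into paths.

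The contracting homotopy $h\colon \dfGCo_{\conn,-}\to \dfGCo_{\conn,-}$ of degree $-1$ is defined by endpoint deletion: set $h(\G_\bul)=0$, $h(\G_\ed)=\G_\bul$, and for a directed path on $n\ge 3$ vertices let $h$ be the signed average of the two shorter paths obtained by removing one endpoint together with its incident edge. To avoid an arbitrary choice of endpoint on the coinvariant classes, I would work in the isomorphic model $\Conv^{\oplus}(\La^2\coCom,\dGra)$ of Section~\ref{sec:dfGC-Conv}, where the $\bbS_n$-averaging is canonical; the sign is fixed by moving the label of the deleted edge to the maximal edge-label position before discarding.

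The identity $h\pa + \pa h = \id$ is then checked generator-by-generator. On $\G_\bul$, $\pa h(\G_\bul)=0$ while $h\pa(\G_\bul) = h(\G_\ed) = \G_\bul$, giving $\id$. On a path $P$ with $n\ge 2$ vertices, $h\pa(P)$ expands as a sum over an interior insertion site followed by an endpoint deletion: the two terms in which the deleted endpoint is the newly created vertex, adjacent to an insertion at one of the outermost interior sites, recombine to reproduce $+P$; every other term---where the deleted endpoint was already present in $P$---matches a term of $\pa h(P)$ (interior-insert into $h(P)$) with opposite sign, and these cancel. A small additional check handles the cases in which certain direction patterns yield odd (hence zero) paths after deletion, but these automatically give zero on both sides of the identity.

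The principal obstacle is the sign bookkeeping: the Koszul sign $-(-1)^{|P|}$ in Proposition~\ref{prop:dfGC}, the transposition sign introduced in the definition of $h$, and the sign relations between direction patterns of paths (coming from the flip symmetry combined with the $\sgn_r$-twist of the $\bbS_r$ action on edge labels) must all align. Working systematically in the convolution-algebra model, or alternatively inducting on the number of vertices using the restrictive form of $\pa$ derived above, should render these signs manageable. Once $h\pa+\pa h=\id$ is established on every generator, acyclicity of $\dfGCo_{\conn,-}$ follows at once.
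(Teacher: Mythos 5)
Your description of the differential on $\dfGCo_{\conn,-}$ is correct (only interior insertions survive, and $\pa\G_{\bul}=\G_{\ed}$), and the checks on $\G_{\bul}$ and $\G_{\ed}$ go through. But the proposed homotopy does not satisfy $h\pa+\pa h=\id$, and the failure is not a matter of sign bookkeeping: the boundary terms of the telescoping are not what you claim. Encode a directed path on $m+1$ vertices as a word $v_1v_2\dots v_m$ in two degree-one symbols $a,b$ recording the direction of each edge relative to a chosen endpoint (this is exactly the identification $\psi$ of Appendix B.2). Then $\pa$ inserts $(a+b)$ at each of the $m-1$ interior positions with alternating signs, and left-endpoint deletion $d_L$ satisfies the operator identity $d_L\pa+\pa d_L\colon v_1\dots v_m\mapsto (a+b)v_2\dots v_m = P+\rho_1(P)$, where $\rho_1$ reverses the outermost edge --- \emph{not} $P\mapsto P$. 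Indeed, after inserting the new edge at the outermost interior site and deleting the old endpoint, the new outermost edge is the inserted one, which carries \emph{both} orientations; so you recover $P$ plus the path obtained from $P$ by flipping its first edge. The same happens at the other end. Hence for any homotopy of the form $h=\al\, d_L+\beta\, d_R$, even with degree-dependent signs chosen so that all interior terms telescope, one gets $h\pa+\pa h=\al(\id+\rho_1)+\beta(\id+\rho_m)$ on $m$-edge paths. This cannot equal $\id$: for the $4$-vertex path $\G$ with all edges pointing the same way (word $aaa$), the classes of $\G$, $\rho_1(\G)$ (word $baa$) and $\rho_3(\G)$ (word $aab$) are nonzero and linearly independent in $\dfGCo_{\conn,-}$ (none of these graphs has a nontrivial automorphism, and they remain pairwise non-isomorphic after reversal), so $\al(\id+\rho_1)+\beta(\id+\rho_3)=\id$ forces $\al=\beta=0$ and $1=0$. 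With your specific sign rule the defect is even larger; a direct computation gives $h\pa(aaa)+\pa h(aaa)=aaa+2\,aba-\tfrac12(baa+aab)\neq aaa$.

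The defect operator $\id+\rho_1$ is precisely why the paper does not argue this way. In Appendix B.2 the complex $\dfGCo_{\conn,-}$ is identified with the $\bbS_2$-coinvariants of the word complex $P=(T(\cV_2),\mb_3)$ of Appendix A; Claim \ref{cl:P} shows $H^{\bul}(P)=\bbK\,[a]$ concentrated in degree $1$, so $P$ itself is \emph{not} contractible --- which already rules out any homotopy, such as endpoint deletion, that is defined on ordered paths before symmetrization. Acyclicity only appears after passing to coinvariants, where $a=b$ while $a+b=2\mb_3(1)$ is exact, so the surviving class dies. If you insist on an explicit contracting homotopy for $\dfGCo_{\conn,-}$, you must build in the relation $a\sim b$ (for instance, contract the acyclic summands $K^{-}_m$, $m\neq 1$, of \eqref{P-sum} separately and kill the residual class $[a]$ using $\mb_3(1)=(a+b)/2$); plain endpoint deletion, however signed or averaged, does not do this.
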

The first part of Proposition \ref{prop:GCo-dfGCo3} is proved in Section \ref{sec:dfGCo-conn-ge3} and 
the second part is proved in Section \ref{sec:GC1ve-to-GC}.  Propositions \ref{prop:dfGCo-diamond} and 
\ref{prop:dfGCo-path} are proved in Appendix \ref{app:polygons-paths}.

\begin{remark}
\label{rem:Thomas-App}
We should remark that the proof of the statement which is very similar to 
the first part of Proposition \ref{prop:GCo-dfGCo3} is sketched in Appendix K of \cite{Thomas}. 
The sketch of the proof given in {\it loc. cit.} is admittedly very brief. 
So we decided to give a careful proof of this statement. 
\end{remark}

\section{Analyzing the subcomplex $\dfGCo_{\conn, \ge 3}$ }
\label{sec:dfGCo-conn-ge3}

In this section, we prove that the embedding \eqref{GCo-to-dfGCo3} is
a quasi-isomorphism of cochain complexes. 

Let $\G$ be an even connected graph with at least one vertex of valency $\ge 3$. 
Let us denote by $\nu_2(\G)$ the number of bivalent 
vertices of $\G$.

It is clear that the linear combination  
$$
\pa \G
$$
may involve only graphs $\G'$ with $\nu_2(\G') = \nu_2(\G)$
or  $\nu_2(\G') = \nu_2(\G) + 1$\,.

Thus we may introduce on the complex $\dfGCo_{\conn, \ge 3}$ an 
ascending filtration 
\begin{equation}
\label{cF-dfGC-ge-3}
\dots \subset \cF^{m-1} \dfGC^{\oplus}_{\conn, \ge 3}
\subset  \cF^m \dfGC^{\oplus}_{\conn, \ge 3} \subset  \cF^{m+1} \dfGC^{\oplus}_{\conn, \ge 3}
\subset \dots
\end{equation}
where $ \cF^m \dfGC^{\oplus}_{\conn, \ge 3}$ consists of vectors  
$\ga \in \dfGC^{\oplus}_{\conn, \ge 3}$
which only involve graphs $\G$ satisfying the inequality $\nu_2(\G)  -  |\ga|  \le  m. $

It is clear that 
$$
 \cF^{m} \dfGCo_{\conn, \ge 3}
$$
does not have non-zero vectors in degree $< -m$\,.
Therefore, the filtration \eqref{cF-dfGC-ge-3} is locally bounded 
from the left. Furthermore, 
$$
\dfGCo_{\conn, \ge 3} = \bigcup_{m} \cF^{m} \dfGCo_{\conn, \ge 3} \,.
$$
In other words,  the filtration \eqref{cF-dfGC-ge-3} is cocomplete.
 
It is also clear that the differential $\pa^{\Gr}$ on the associated graded 
complex 
\begin{equation}
\label{Gr-sfgraphs}
\Gr( \dfGC^{\oplus}_{\conn, \ge 3}) =
\bigoplus_m \cF^m \dfGC^{\oplus}_{\conn, \ge 3}  ~ \Big/~ \cF^{m-1} \dfGC^{\oplus}_{\conn, \ge 3}\,. 
\end{equation} 
is obtained from $\pa$ by keeping only the terms 
which raise the number of the bivalent vertices. 

Thus, since $\GCo$ is a subcomplex of $\dfGC^{\oplus}_{\conn, \ge 3}$, 
we conclude that
$$
(\GC^{\oplus})^k \subset  \cF^{-k} (\dfGC^{\oplus}_{\conn, \ge 3})^k~ \cap ~ \ker \pa^{\Gr}\,, 
$$ 
where $(\GC^{\oplus})^k$ (resp. $ \cF^{-k} (\dfGC^{\oplus}_{\conn, \ge 3})^k$) denotes the 
subspace of degree $k$ vectors in $\GC^{\oplus}$  (resp. in $ \cF^{-k} \dfGC^{\oplus}_{\conn, \ge 3}$)\,.

We need the following technical lemma which is proved in Section \ref{sec:lem:Gr} below.
\begin{lem}
\label{lem:Gr}
For the filtration  \eqref{cF-dfGC-ge-3}  on $\dfGCo_{\conn, \ge 3}$ we have 
\begin{equation}
\label{Gr-sfgraphs-k-m}
 H^{k}\Big( \cF^m \dfGC^{\oplus}_{\conn, \ge 3}  \big/  \cF^{m-1} \dfGC^{\oplus}_{\conn, \ge 3} \Big) = 0 
\end{equation}
for all $m > - k$\,. Moreover, 
\begin{equation}
\label{sfgraphs-OK}
(\GC^{\oplus})^k =  \cF^{-k} (\dfGC^{\oplus}_{\conn, \ge 3})^k~ \cap ~ \ker \pa^{\Gr}\,.
\end{equation} 
\end{lem}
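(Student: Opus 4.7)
The plan is to prove Lemma \ref{lem:Gr} by decomposing the associated graded complex $\Gr(\dfGC^{\oplus}_{\conn, \ge 3})$ into tensor products of ``bivalent chain'' subcomplexes indexed by the \emph{core} of the underlying graph. For any connected graph $\G \in \dfGCo_{\conn, \ge 3}$, let $K(\G)$ be the graph obtained by contracting every maximal path of bivalent vertices to a single edge; the vertices of $K(\G)$ are exactly the non-bivalent (that is, univalent or $\ge 3$-valent) vertices of $\G$. Each edge of $K(\G)$ carries a \emph{decoration}: the contracted path of bivalent vertices together with the directions on the $n+1$ intermediate edges. Because $\pa^{\Gr}$ inserts exactly one additional bivalent vertex without altering the non-bivalent subgraph, the differential preserves the core and acts edge-by-edge on the decorations.

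First I would establish a direct-sum (and internal tensor-product) decomposition of $\Gr(\dfGC^{\oplus}_{\conn, \ge 3})$ indexed by isomorphism classes of cores $S$, where each core's summand is (up to symmetries coming from $\Aut(S)$) the tensor product over the edges $(p,q)$ of $S$ of a ``bivalent chain complex'' $B(p,q)$. Here $B(p,q)$ is the graded vector space spanned by directed paths from $p$ to $q$ with $n \ge 0$ intermediate bivalent vertices and arbitrary directions on each edge, modulo the sign arising from the $\bbS_r$-action on edge labels, and $\pa^{\Gr}$ acts by inserting a new bivalent vertex at all possible positions with a sum over both directions of the new edge.

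The key step is computing $H^{\bul}(B(p,q))$, and the answer depends on whether the endpoints are univalent or not. If both $p$ and $q$ have valency $\ge 3$ in the core (so both contribute to inserting a bivalent vertex on the edge), then $B(p,q)$ has one-dimensional cohomology concentrated at $n=0$, represented by the sum over both directions of the single edge from $p$ to $q$---the ``undirected edge''. If either endpoint is univalent, then the discarding rule in Proposition \ref{prop:dfGC} eliminates the contribution from splitting that endpoint, so subdivision comes only from the other side; the resulting complex is then easily seen to be acyclic via a contracting homotopy that removes the bivalent vertex closest to the univalent endpoint.

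Combining these calculations, the cohomology of $\cF^m/\cF^{m-1}$ vanishes unless \emph{every} edge of the core contributes a non-zero class, which happens precisely when the core has no univalent vertices, i.e.\ all vertices have valency $\ge 3$. In this case the cohomology sits in degree $-m$ and is represented by the undirected version of that core, recovering $(\GC^{\oplus})^k$ in degree $k=-m$. This simultaneously gives $H^k(\cF^m/\cF^{m-1}) = 0$ for $m > -k$ and the identification $(\GC^{\oplus})^k = \cF^{-k}(\dfGC^{\oplus}_{\conn, \ge 3})^k \cap \ker \pa^{\Gr}$. The main obstacle will be the careful bookkeeping of signs coming from the edge labels $\sgn_r$ and from the directional freedom of the new edges inserted by $\pa^{\Gr}$; these signs are what make the symmetric (undirected) cocycle emerge as the \emph{unique} surviving cohomology class when both endpoints of an edge have valency $\ge 3$, and they are what kill the entire bivalent chain complex when one endpoint is univalent.
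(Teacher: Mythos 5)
Your proposal is correct and follows essentially the same route as the paper: your ``core'' $K(\G)$ is the paper's notion of a frame, your edge-wise bivalent chain complexes $B(p,q)$ are the complexes $U$ (acyclic, for edges at univalent vertices) and $R$ (cohomology $\bbK$ spanned by the undirected edge $a+b$, for the rest), and the conclusion via K\"unneth and passage to coinvariants under the core's automorphisms is identical. The sign and symmetry bookkeeping you flag as the main obstacle is precisely what occupies the bulk of the paper's argument (the group $\cG_{\Gim}$ and the description of $\ker F_{\Gim}$), but your outline contains all the essential ideas.
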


It is easy to see that the restriction of  
\eqref{cF-dfGC-ge-3} to the subcomplex 
$\GC^{\oplus}$ gives us the ``silly'' filtration: 
\begin{equation}
\label{silly-filtr}
\cF^{m} (\GC^{\oplus})^k  = 
\begin{cases}
(\GC^{\oplus})^k \qquad {\rm if} ~~ m \ge -k\,,  \\
\bfzero  \qquad {\rm otherwise}
\end{cases} 
\end{equation}
and the associated graded complex $\Gr(\GC^{\oplus})$ for this 
filtration has the zero differential.

Since 
$$
 \cF^{m} (\dfGC^{\oplus}_{\conn, \ge 3})^k = \bfzero \qquad \forall ~~ m < -k\,,
$$
we have 
$$
\cF^{-k} (\dfGC^{\oplus}_{\conn, \ge 3})^k~ \cap ~ \ker \pa^{\Gr} = 
  H^k \big( \, \cF^{-k} \dfGC^{\oplus}_{\conn, \ge 3} \big/  \cF^{-k-1} \dfGC^{\oplus}_{\conn, \ge 3}  \, \big)\,.
$$

Thus, Lemma \ref{lem:Gr} implies that, the embedding  \eqref{GCo-to-dfGCo3} 
induces a quasi-isomorphism of cochain complexes 
$\displaystyle \Gr (\GCo)  \stackrel{\sim}{\longrightarrow}  \Gr(\dfGCo_{\conn, \ge 3}).$
On the other hand, both filtrations \eqref{cF-dfGC-ge-3} and 
\eqref{silly-filtr} are locally bounded from 
the left and cocomplete. 
Therefore the embedding \eqref{GCo-to-dfGCo3}
satisfies all the conditions of Lemma A.3 from \cite[Appendix A]{notes}
and hence it is a quasi-isomorphism.

\subsection{Proof of Lemma \ref{lem:Gr}} \label{sec:lem:Gr}

\subsubsection{Frames}

To every positive integer $n$, we assign an auxiliary groupoid $\Frame_{n}$\,. 
An object\footnote{We call objects of the groupoid $\Frame_{n}$ {\it frames}.} 
of this groupoid is a \und{connected} directed graph $\Gim \in \dgra_n^r$ for some 
$r \ge 2$ satisfying the following properties: 
\begin{itemize}

\item $\Gim$ does not have bivalent vertices and at least one vertex of $\Gim$
has valency $\ge 3$; for example the graphs shown in figure \ref{fig:not-frames} are not frames
while the graphs shown in figures \ref{fig:kissing-loops} and \ref{fig:poked-loop} are frames;

\item each edge adjacent to a univalent vertex (if any) of $\Gim$ originates at this univalent vertex; 
 
\item the edges of $\Gim$ are labeled in 
such a way that edges incident to univalent vertices 
(if any) precede all the remaining edges; 

\item finally, loops of $\Gim$ (if any) go after all the remaining edges.

\end{itemize}
\begin{figure}[htp] 
\begin{minipage}[t]{0.35\linewidth}
\centering 
\begin{tikzpicture}[scale=1, >=stealth']
\tikzstyle{w}=[circle, draw, minimum size=3, inner sep=1]
\tikzstyle{b}=[circle, draw, fill, minimum size=3, inner sep=1]
\node [b] (b1) at (0,0) {};
\node [b] (b2) at (2,0) {};
\draw [->] (b1) edge (b2);
\draw (1,1) circle (0.5);
\node [b] (bb) at (1,0.5) {};
\end{tikzpicture}
\caption{These graphs \und{are not} frames} \label{fig:not-frames}
\end{minipage} 
~
\begin{minipage}[t]{0.3\linewidth}
\centering 
\begin{tikzpicture}[scale=0.5, >=stealth']
\tikzstyle{w}=[circle, draw, minimum size=3, inner sep=1]
\tikzstyle{b}=[circle, draw, fill, minimum size=3, inner sep=1]
\node [b] (b1) at (0,0) {};
\draw (-1,0) circle (1);
\draw (1,0) circle (1);
\end{tikzpicture}
\caption{This graph \und{is} a frame} \label{fig:kissing-loops}
\end{minipage}
~
\begin{minipage}[t]{0.3\linewidth}
\centering 
\begin{tikzpicture}[scale=0.5, >=stealth']
\tikzstyle{w}=[circle, draw, minimum size=3, inner sep=1]
\tikzstyle{b}=[circle, draw, fill, minimum size=3, inner sep=1]
\node [b] (b1) at (1,0) {};
\node [b] (b2) at (2.5,0) {};
\draw [->] (b2) edge (b1);
\draw (0,0) circle (1);
\end{tikzpicture}
\caption{This graph \und{is} a frame} \label{fig:poked-loop}
\end{minipage}
\end{figure}

A morphism from a frame $\Gim$ to a frame $\Gim'$ is 
an isomorphism of the underlying graphs which respects  
neither the total order on the set of edges, nor the directions of
edges. For example, there are exactly two isomorphism classes of 
frames with $2$ edges and the corresponding representatives 
are shown in figures \ref{fig:kissing-loops} and \ref{fig:poked-loop}.

\begin{example}
\label{exam:frame}
Note that frames {\it may} have multiple edges with the same direction. For example, 
the frame $\Gim$ shown in figure \ref{fig:frame-Gim} has a double edge.
It is clear that the frame $\Gim$ has only one non-trivial automorphism, 
i.e. the one which interchanges edges $\und{5}$ and $\und{6}$.

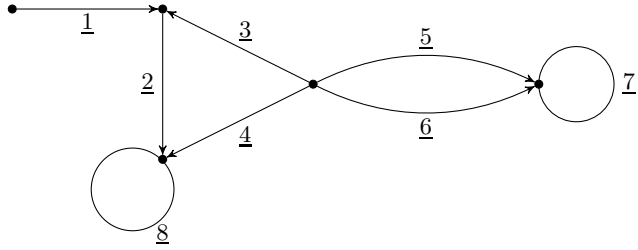
\begin{figure}[htp] 
\centering 
\begin{tikzpicture}[scale=1, >=stealth']
\tikzstyle{w}=[circle, draw, minimum size=3, inner sep=1]
\tikzstyle{b}=[circle, draw, fill, minimum size=3, inner sep=1]
\draw (0,1.5) node[anchor=center] {{\phantom{aaaaaaaaaaaa}}};
\node [b] (b1) at (0,0) {};
\node [b] (b2) at (3,0) {};
\node [b] (b3) at (-2,1) {};
\node [b] (b5) at (-4,1) {};
\node [b] (b4) at (-2,-1) {};
\draw [->] (b1) ..controls (1,0.5) and (2,0.5) .. (b2);
\draw (1.5,0.6) node[anchor=center] {{\small $\und{5}$}};
\draw [->] (b1) ..controls (1,-0.5) and (2,-0.5) .. (b2);
\draw (1.5,-0.6) node[anchor=center] {{\small $\und{6}$}};
\draw (3.5,0) circle (0.5);
\draw (4.2,0) node[anchor=center] {{\small $\und{7}$}};
\draw [->] (b1) edge (b3);
\draw (-0.9,0.7) node[anchor=center] {{\small $\und{3}$}};
\draw [->] (b1) edge (b4);
\draw (-0.9,-0.7) node[anchor=center] {{\small $\und{4}$}};
\draw [->] (b3) edge (b4);
\draw (-2.2,0) node[anchor=center] {{\small $\und{2}$}};
\draw [->] (b5) edge (b3);
\draw (-3,0.8) node[anchor=center] {{\small $\und{1}$}};
\draw (-2.4,-1.4) circle (0.55);
\draw (-2,-2) node[anchor=center] {{\small $\und{8}$}};
\end{tikzpicture}
\caption{An example of a frame $\Gim$. Labels for vertices are not shown} \label{fig:frame-Gim}
\end{figure}
\end{example}

The total number of edges $e$ of any frame $\Gim$ splits into the sum $e = e_{\bul}  + e_{-} + e_{\c}\,,$
where $e_{\bul}$ is the number of 
edges of $\Gim$ adjacent to univalent vertices (if any), $e_{\c}$ is the number of loops of $\Gim$ (if any) 
and $e_{-}$ is the number of the remaining edges. 

To describe the associated graded complex 
$$
\Gr(\dfGCo_{\conn, \ge 3}),
$$
we denote by $\cV_2$ the two dimensional vector space 
spanned by symbols $a$ and $b$ placed in degree $1$. 
We consider the truncated tensor algebra of $\cV_2$
\begin{equation} 
\label{red-tensor-alg}
\und{T}(\cV_2) : = \cV_2 \oplus (\cV_2)^{\otimes \, 2} \oplus  (\cV_2)^{\otimes \, 3} \oplus \dots,
\end{equation}
introduce on (the underlying vector space of) $\und{T}(\cV_2)$ the following two differentials
\begin{equation}
\mb_{1}(v_1 v_2 \dots v_n) ~: =~ \sum_{i =1}^{n} (-1)^{i+1} \, v_1 \dots v_{i} (a+b) v_{i+1} \dots v_n 
\end{equation}
\begin{equation}
\mb_{2} (v_1 v_2 \dots v_n) ~: =~ - (a+b) v_1  \dots v_n + 
\mb_{1}(v_1 v_2 \dots v_n)\,, \quad v_i  \in \cV_2
\end{equation}
and denote by\footnote{$U$ stands for ``{\bf u}nivalent'' and $R$ stands for ``{\bf r}emaining''.} $U$ and $R$ 
the corresponding cochain complexes:
\begin{equation}
\label{UR}
U =(\und{T}(\cV_2),\mb_{1}), \qquad R = (\und{T}(\cV_2),\mb_{2}).
\end{equation}
For our purposes, it is convenient to use the simplified notation $v_1 v_2 \dots v_n$ 
for the tensor monomial $v_1 \otimes v_2 \otimes \dots \otimes v_n$ in $(\cV_2)^{\otimes \, n}$\,.
Of course, in general,  $v_1 \dots v_i v_{i+1}\dots  v_n \neq  - v_1 \dots v_{i+1} v_{i}\dots  v_n $ 
in $(\cV_2)^{\otimes \, n}$.
 
We observe that the cochain complex $R$ carries the following action of the group $\bbS_2$
\begin{equation}
\label{S2-action}
\si (v_1 v_2 \dots v_n) : = (-1)^{\frac{n(n-1)}{2}}\, \overline{v_n} \, \overline{v_{n-1}} \, \dots \,  \overline{v_1}\,,  
\end{equation}
where $\si = (1,2) \in \bbS_2$, each $v_j$ is either $a$ or $b$, and 
\[
\overline{v_{j}} =
\begin{cases}
b & \text{if $v_{j} =a$},\\
a  & \text{if $v_{j} =b$}.
\end{cases}
\]

Given a frame $\Gim \in \Frame_{n}$ with $e = e_{\bul}  + e_{-} + e_{\c}$ edges and 
a tensor product of monomials 
\begin{equation}
\label{P-here}
P  =  p_{1} \tensor \cdots \tensor p_{e_{\bul}} \tensor p_{e_{\bul} +1} \tensor  \cdots  \tensor  p_{e} \in 
U^{\otimes\, e_{\bul}}  \otimes   R^{\otimes\, (e_{-} + e_{\c})} 
\end{equation}
we can form a graph $\G \in \dgra_m^r$, where
$$
m =  n +  \sum_{i=1}^{e} (\deg(p_i) - 1) \qquad \textrm{and} \qquad r = \deg(p_1) + \deg(p_2) + \dots + \deg(p_e).
$$

To better visualize this construction, it is convenient to think of the tensor 
product of monomials $P$ as the sequence of disjoint arrows: the symbol $a$
corresponds to the arrows pointing to the right and the symbol $b$ corresponds to 
the arrow pointing to the left. Such a sequence of arrows has exactly $2 e$ vertices. 
We label these vertices in the natural order from left to right. For example, the tensor 
product 
$$
b \otimes  a b \otimes  a \otimes b \otimes b a \otimes a \otimes aba \otimes b
$$
corresponds to the disjoint graph:
$$
\begin{tikzpicture}[scale=0.7, >=stealth']
\tikzstyle{w}=[circle, draw, minimum size=3, inner sep=1]
\tikzstyle{b}=[circle, draw, fill, minimum size=3, inner sep=1]
\node [b] (v1) at (0,0) {};
\draw (0,0.25) node[anchor=center] {{\tiny $1$}};
\node [b] (v2) at (1,0) {};
\draw (1,0.25) node[anchor=center] {{\tiny $2$}};
\draw [<-] (v1) edge (v2);
\node [b] (v3) at (1.5,0) {};
\draw (1.5,0.25) node[anchor=center] {{\tiny $3$}};
\node [b] (v4) at (2.5,0) {};
\draw (2.5,0.25) node[anchor=center] {{\tiny $4$}};
\draw [->] (v3) edge (v4);
\node [b] (v5) at (3,0) {};
\draw (3,0.25) node[anchor=center] {{\tiny $5$}};
\node [b] (v6) at (4,0) {};
\draw (4,0.25) node[anchor=center] {{\tiny $6$}};
\draw [<-] (v5) edge (v6);
\node [b] (v7) at (4.5,0) {};
\draw (4.5,0.25) node[anchor=center] {{\tiny $7$}};
\node [b] (v8) at (5.5,0) {};
\draw (5.5,0.25) node[anchor=center] {{\tiny $8$}};
\draw [->] (v7) edge (v8);
\node [b] (v9) at (6,0) {};
\draw (6,0.25) node[anchor=center] {{\tiny $9$}};
\node [b] (v10) at (7,0) {};
\draw (7,0.25) node[anchor=center] {{\tiny $10$}};
\draw [<-] (v9) edge (v10);
\node [b] (v11) at (7.5,0) {};
\draw (7.5,0.25) node[anchor=center] {{\tiny $11$}};
\node [b] (v12) at (8.5,0) {};
\draw (8.5,0.25) node[anchor=center] {{\tiny $12$}};
\draw [<-] (v11) edge (v12);
\node [b] (v13) at (9,0) {};
\draw (9,0.25) node[anchor=center] {{\tiny $13$}};
\node [b] (v14) at (10,0) {};
\draw (10,0.25) node[anchor=center] {{\tiny $14$}};
\draw [->] (v13) edge (v14);
\node [b] (v15) at (10.5,0) {};
\draw (10.5,0.25) node[anchor=center] {{\tiny $15$}};
\node [b] (v16) at (11.5,0) {};
\draw (11.5,0.25) node[anchor=center] {{\tiny $16$}};
\draw [->] (v15) edge (v16);
\node [b] (v17) at (12,0) {};
\draw (12,0.25) node[anchor=center] {{\tiny $17$}};
\node [b] (v18) at (13,0) {};
\draw (13,0.25) node[anchor=center] {{\tiny $18$}};
\draw [->] (v17) edge (v18);
\node [b] (v19) at (13.5,0) {};
\draw (13.5,0.25) node[anchor=center] {{\tiny $19$}};
\node [b] (v20) at (14.5,0) {};
\draw (14.5,0.25) node[anchor=center] {{\tiny $20$}};
\draw [<-] (v19) edge (v20);
\node [b] (v21) at (15,0) {};
\draw (15,0.25) node[anchor=center] {{\tiny $21$}};
\node [b] (v22) at (16,0) {};
\draw (16,0.25) node[anchor=center] {{\tiny $22$}};
\draw [->] (v21) edge (v22);
\node [b] (v23) at (16.5,0) {};
\draw (16.5,0.25) node[anchor=center] {{\tiny $23$}};
\node [b] (v24) at (17.5,0) {};
\draw (17.5,0.25) node[anchor=center] {{\tiny $24$}};
\draw [<-] (v23) edge (v24);
\end{tikzpicture}
$$

To form the graph $\G$ corresponding to the monomial \eqref{P-here} and the 
frame $\Gim$, we follow these steps: 

\begin{itemize}

\item For every $ 1 \le i \le e$, we introduce vertices $s_i$ and $t_i$ and 
convert the symbols of the  monomial $p_i$ into $k_i$ directed edges which 
connect $s_i$ with $t_i$ by these rule: if a symbol $w$ in $p_i$ stands to the left of the symbol $w'$
then the edge corresponding to $w$ is closer to $s_i$ than the edge corresponding to $w'$, 
if the $j$-th symbol in $p_i$
is $a$ then we direct corresponding edge toward $t_i$; otherwise, we direct 
the corresponding edge towards $s_i$. For example, if $p_i = aaba$ then we 
assign to $p_i$ this string of edges 
$$
\begin{tikzpicture}[scale=1, >=stealth']
\tikzstyle{w}=[circle, draw, minimum size=3, inner sep=1]
\tikzstyle{b}=[circle, draw, fill, minimum size=3, inner sep=1]
\node [b] (b1) at (0,0) {};
\draw (0,0.3) node[anchor=center] {{\small $s_i$}};
\node [b] (b2) at (1,0) {};
\node [b] (b3) at (2,0) {};
\node [b] (b4) at (3,0) {};
\node [b] (b5) at (4,0) {};
\draw (4,0.3) node[anchor=center] {{\small $t_i$}};
\draw [->] (b1) edge (b2) (b2) edge (b3) (b4) edge (b3) edge (b5);
\end{tikzpicture}
$$ 

\item If the $i$-th edge of $\Gim$ is not a loop, then we replace it
with the above string of directed edges corresponding to $p_i$ via identifying
$s_i$ (resp. $t_i$) with what was the source (resp. the target) of the $i$-th edge of $\Gim$. 

\item If the $i$-th edge of $\Gim$ is a loop based, say, at vertex $v$ 
then we remove it and identify both $s_i$ and $t_i$ with $v$. 

\item Since the resulting graph $\G$ is obtained by gluing edges corresponding  
to symbols in the tensor product $P$, we get the obvious map 
$$
\psi: \{1,2, \dots, 2 e \} \to V(\G),
$$
where $V(\G)$ is the set of vertices of $\G$. We order these vertices according 
to this rule: $v_1 < v_2$ $~\Leftrightarrow~$ $\min(\psi^{-1}(v_1)) < \min(\psi^{-1}(v_2))$.  

\item  Since the set of edges of $\G$ is in bijection with symbols in  
\eqref{P-here}, it is already equipped with a total order. 

\end{itemize}
In what follows, we will say that $\G$ is {\it the graph reconstructed from}
the tensor product of monomials in \eqref{P-here} using the frame $\Gim$. 

It is easy to see that the assignment  $F_{\Gim} (P) : = \G$
defines a degree zero map 
\begin{equation}
\label{F-Gim}
F_{\Gim} : \bs^{2n - 2 - 2 e} \,  U^{\otimes\, e_{\bul}}  
\otimes   R^{\otimes\, (e_{-} + e_{\c})} 
~\to~  \Gr (\dfGCo_{\conn, \ge 3})\,.
\end{equation}

%
%
\begin{example}
\label{exam:F-Gim}
Let $\Gim$ be the frame shown in figure \ref{fig:frame-Gim} and 
$P$ be the tensor product of monomials  
\begin{equation}
\label{P-exam}
P = b \otimes ( a b \otimes  a \otimes b \otimes b a \otimes a \otimes aba \otimes b) 
\in U \otimes R^{\otimes \, 7}\,. 
\end{equation}
Then $F_{\Gim} (P) = \G$, where $\G$ is the labeled graph 
shown in figure \ref{fig:G-ogo-go}.
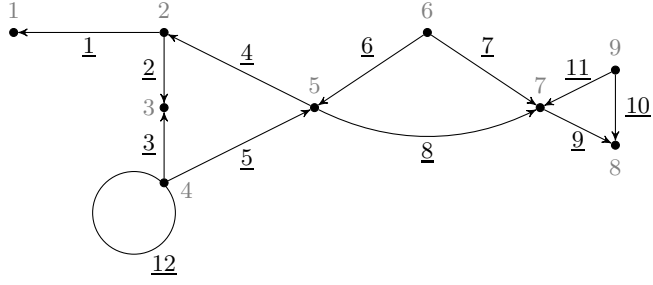
\begin{figure}[htp] 
\centering 
\begin{tikzpicture}[scale=1, >=stealth']
\tikzstyle{w}=[circle, draw, minimum size=3, inner sep=1]
\tikzstyle{b}=[circle, draw, fill, minimum size=3, inner sep=1]
\node [b] (b1) at (0,0) {};
\draw (0,0.3) node[anchor=center, color = gray] {{\small $5$}};
\node [b] (b7) at (1.5,1) {};
\draw (1.5,1.3) node[anchor=center, color = gray] {{\small $6$}};
\node [b] (b2) at (3,0) {};
\draw (3,0.3) node[anchor=center, color = gray] {{\small $7$}};
\node [b] (b3) at (-2,1) {};
\draw (-2,1.3) node[anchor=center, color = gray] {{\small $2$}};
\node [b] (b6) at (-2,0) {};
\draw (-2.2,0) node[anchor=center, color = gray] {{\small $3$}};
\node [b] (b5) at (-4,1) {};
\draw (-4,1.3) node[anchor=center, color = gray] {{\small $1$}};
\node [b] (b4) at (-2,-1) {};
\draw (-1.7,-1.1) node[anchor=center, color = gray] {{\small $4$}};
\node [b] (b8) at (4,-0.5) {};
\draw (4,-0.8) node[anchor=center, color = gray] {{\small $8$}};
\node [b] (b9) at (4,0.5) {};
\draw (4,0.8) node[anchor=center, color = gray] {{\small $9$}};
\draw [->] (b2) edge (b8); 
\draw [->] (b9) edge (b8); 
\draw [->] (b9) edge (b2); 
\draw (3.5,-0.45) node[anchor=center] {{\small $\und{9}$}};
\draw (3.5,0.5) node[anchor=center] {{\small $\und{11}$}};
\draw (4.3,0) node[anchor=center] {{\small $\und{10}$}};
\draw [<-] (b1) edge (b7); \draw [->] (b7) edge (b2); 
\draw (0.7,0.8) node[anchor=center] {{\small $\und{6}$}};
\draw (2.3,0.8) node[anchor=center] {{\small $\und{7}$}};
\draw [->] (b1) ..controls (1,-0.5) and (2,-0.5) .. (b2);
\draw (1.5,-0.6) node[anchor=center] {{\small $\und{8}$}};
\draw [->] (b1) edge (b3);
\draw (-0.9,0.7) node[anchor=center] {{\small $\und{4}$}};
\draw [->] (b4) edge (b1);
\draw (-0.9,-0.7) node[anchor=center] {{\small $\und{5}$}};
\draw [->] (b3) edge (b6);
\draw (-2.2, 0.5) node[anchor=center] {{\small $\und{2}$}};
\draw [<-] (b6) edge (b4);
\draw (-2.2, -0.5) node[anchor=center] {{\small $\und{3}$}};
\draw [<-] (b5) edge (b3);
\draw (-3,0.8) node[anchor=center] {{\small $\und{1}$}};
\draw (-2.4,-1.4) circle (0.55);
\draw (-2,-2.1) node[anchor=center] {{\small $\und{12}$}};
\end{tikzpicture}
\caption{The graph reconstructed from the tensor product of monomials \eqref{P-exam}} \label{fig:G-ogo-go}
\end{figure}
\end{example}

\begin{remark}
\label{rem:zero}
It may happen that the graph $\G$ reconstructed from a tensor product of monomials 
$P$ in $ U^{\otimes\, e_{\bul}}  \otimes   R^{\otimes\, (e_{-} + e_{\c}) } $   
is odd and hence $F_{\Gim}(P) = 0$\,. For example if $\Gim$ is the frame shown 
in figure  \ref{fig:frame-Gim} then the graph reconstructed from the monomial 
$$
P' = b \otimes ( a b \otimes  a \otimes b \otimes b \otimes b \otimes aba \otimes b )
\in U \otimes R^{\otimes \, 7} 
$$
has a pair of edges with the same direction connecting the same vertices.  
Hence $F_{\Gim}(P') = 0$\,.

Here is another example. If 
$$
Q = aba \otimes aba
$$
then the graph reconstructed from $Q$ using the frame 
$\Gim_{\lp \lp}$ in figure \ref{fig:kissing-loops} is shown in figure 
\ref{fig:kissing-triangles}.
\begin{figure}[htp] 
\centering 
\begin{tikzpicture}[scale=1, >=stealth']
\tikzstyle{w}=[circle, draw, minimum size=3, inner sep=1]
\tikzstyle{b}=[circle, draw, fill, minimum size=3, inner sep=1]
\node [b] (b1) at (0,0) {};
\node [b] (b2) at (-1,-0.5) {};
\node [b] (b3) at (-1,0.5) {};
\node [b] (b4) at (1,-0.5) {};
\node [b] (b5) at (1,0.5) {};
\draw [->] (b1) edge (b2) (b3) edge (b2) edge (b1)
(b1) edge (b4) (b5) edge (b4) edge (b1); 
\draw (-0.4,-0.4) node[anchor=center] {{\small $\und{1}$}};
\draw (-1.2,0) node[anchor=center] {{\small $\und{2}$}};
\draw (-0.4,0.45) node[anchor=center] {{\small $\und{3}$}};
\draw (0.4,-0.4) node[anchor=center] {{\small $\und{4}$}};
\draw (1.2,0) node[anchor=center] {{\small $\und{5}$}};
\draw (0.4,0.45) node[anchor=center] {{\small $\und{6}$}};
\end{tikzpicture}
\caption{The labeling of vertices is not specified} \label{fig:kissing-triangles}
\end{figure}
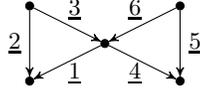
This graph has the obvious automorphism which ``switched the triangles'' and 
this automorphism gives us the odd permutation $(1,4)(2,5)(3,6) $ in $\bbS_6$. 
Hence $F_{\Gim_{\lp \lp}} (Q) = 0$\,. 
\end{remark}

Let us denote by $\pa^{\Gr}$ the differential on 
the associated graded complex $\Gr( \dfGC^{\oplus}_{\conn, \ge 3})$\,.
It is clear from the definition of the filtration \eqref{cF-dfGC-ge-3}
on $\dfGC^{\oplus}_{\conn, \ge 3}$ that  $\pa^{\Gr}$
is obtained from $\pa$ by keeping only the terms 
which raise the number of 
the bivalent vertices. Hence the image of the map 
$F_{\Gim}$ is closed with respect to the action the differential 
$\pa^{\Gr}$\,. 

Using equation \eqref{pa-dfGC-simpler} it is easy to show that
for every frame $\Gim$,  
\begin{equation}
\label{F-Gim-diff}
\pa^{\Gr} \circ F_{\Gim} = F_{\Gim} \circ \de \,,
\end{equation}
where $\de$ is the differential on the vector space
\begin{equation}
\label{source-FGim}
\bs^{2n-2 - 2e} \,  U^{\otimes\, e_{\bul}}  \otimes  R^{\otimes\, (e_{-} + e_{\c})}
\end{equation}
given by the formula $\delta = \mb_{1} \tensor 1 + 1 \tensor \mb_{2}\,.$

\subsubsection{The kernel of $F_{\Gim}$}
\label{sec:ker-F-Gim}

To describe the kernel of the map $F_{\Gim}$, we introduce the semi-direct product 
$\bbS_e \ltimes \big(\bbS_2\big)^{e}$
of the groups $\bbS_e$ and $\big(\bbS_2\big)^{e}$
with the multiplication rule: 
\begin{equation}
\label{group-law}
(\tau; \si_1, \dots, \si_{e}) \cdot (\la; \si'_1, \dots, \si'_{e}) =  
(\tau \la; \si_{\la(1)} \si'_1, \dots, \si_{\la(e)} \si'_e)\,,
\end{equation}
where, as above, $e$ is the number of edges of the frame $\Gim$.

The subgroup
\begin{equation}
\label{really-rearrange}
\cG_{\Gim} : = \Big( \bbS_{e_{\bul}} \times \bbS_{e_{-}} \times \bbS_{e_{\c}} \Big) 
 \ltimes \Big( \{\id\}^{e_{\bul}}  \times  \bbS_2^{\,(e_{-}  + e_{\c})}  \Big)
\end{equation}
of $\bbS_e \ltimes \big(\bbS_2\big)^{e}$ acts on the graded vector space \eqref{source-FGim} in the following way:
\begin{itemize}

\item{If $\si = (1,2) \in \bbS_2$ and $e_{\bul} < i  \le  e $  
then
$$
(1, \dots, 1,
\underbrace{\si}_{i\textrm{-th spot}},1, \dots 1) (p_{1}  \tensor \cdots  \tensor p_{e_{\bul}} \tensor p_{e_{\bul} +1}  \tensor \cdots  \tensor p_{e}  ) : = 
$$
$$
p_{1} \tensor  \cdots  \tensor p_{e_{\bul}} \tensor p_{e_{\bul} +1}  \tensor \cdots  \tensor \si(p_i)  \tensor \cdots  \tensor p_{e} ,
$$
where $\si(p_i)$ is defined in \eqref{S2-action}. }

\item{ For every $\tau \in  \bbS_{e_{\bul}} \times \bbS_{e_{-}} \times \bbS_{e_{\c}}$ we set
$$
\tau (p_{1} \tensor p_2 \tensor \cdots \tensor p_{e}) ~ : =~ 
(-1)^{\ve(\tau)} \, p_{\tau^{-1}(1)} \tensor p_{\tau^{-1}(2)} \tensor \cdots \tensor  p_{\tau^{-1} (e)}\,, 
$$
where the sign factor $(-1)^{\ve(\tau)}$ is determined by  the usual Koszul rule. 
}

\end{itemize}

We will now prove the following claim: 
\begin{claim} 
\label{cl:grpaction}
Let $\Gim$ be a frame with $e = e_{\bul} + e_{-} + e_{\c}$ edges and $\G, \G'$ be
the graphs reconstructed from the tensor products of monomials 
$$ 
X= p_{1} \tensor p_2 \tensor \cdots \tensor p_{e}\,,
\qquad
X' = p'_{1} \tensor p'_2 \tensor \cdots \tensor p'_{e}
$$
in \eqref{source-FGim}, respectively, using $\Gim$. 
Let $k$ be the number of edges of $\G$ and $\vf  \maps \G \to \G'$ be
an isomorphism of graphs which induces a permutation $\si_{\vf} \in \bbS_k$. 
If $\si_{\vf}$ is even, then there exists an element $g \in \cG_{\Gim}$ such that
$g (X) = X'.$  If $\si_{\vf}$ is odd, then there exists an
element $g \in \cG_{\Gim}$ such that $g (X) = - X'.$  
\end{claim}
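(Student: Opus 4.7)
The plan is to build from $\vf$ a canonical element $g \in \cG_{\Gim}$ whose action on $X$ yields $\pm X'$, with the sign determined by the parity of $\si_\vf$. First, I would observe that $\G$ and $\G'$, being reconstructed from the same frame $\Gim$, admit a canonical ``frame quotient'': their vertices of valency $\ge 3$ are exactly the non-univalent vertices of $\Gim$, their bivalent vertices are the interior vertices of the edge expansions, and their univalent vertices are those of $\Gim$. Since $\vf$ preserves valencies, it descends to an isomorphism $\tilde\vf \maps \Gim \to \Gim$ in the groupoid $\Frame_{n}$, i.e.\ a permutation of edges that respects neither the edge labels nor the directions on non-univalent-adjacent edges. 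This $\tilde\vf$ produces a permutation $\tau = \tau_\bullet \oplus \tau_{-} \oplus \tau_{\c} \in \bbS_{e_\bullet}\times \bbS_{e_{-}} \times \bbS_{e_{\c}}$ of the three blocks of edges of $\Gim$, together with, for each non-univalent-adjacent edge $i$, a reversal flag recording whether $\tilde\vf$ reverses its direction.

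I would then set $g = (\tau; \si_1, \dots, \si_e) \in \cG_{\Gim}$, where $\si_i = \id$ for $i \le e_\bullet$ (the direction of a univalent-adjacent edge is forced by its univalent endpoint, which $\tilde\vf$ sends to another univalent endpoint) and $\si_i = (1,2) \in \bbS_2$ precisely when $\tilde\vf$ reverses the direction of the $i$-th edge of $\Gim$. To compare $g \cdot X$ with $X'$, I would examine the tensor factors. The monomial $p'_j$ encodes the sequence of directed edges in the expansion of the $j$-th edge of $\Gim$ inside $\G'$, read from source to target. Transporting via $\vf^{-1}$, this expansion coincides with the expansion of the $\tau^{-1}(j)$-th edge of $\Gim$ inside $\G$, traversed in the opposite direction precisely when the associated reversal flag is on. The twist $(-1)^{n(n-1)/2}\,\bar v_n \bar v_{n-1} \cdots \bar v_1$ built into the $\bbS_2$-action on $R$ is exactly what produces the same string of directed edges when read backwards, yielding $p'_j = \si_{\tau^{-1}(j)}\bigl(p_{\tau^{-1}(j)}\bigr)$; that is, the factors of $g \cdot X$ and $X'$ match as (unsigned) monomials.

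The heart of the argument is the sign computation. Acting by $g$ produces the total sign $(-1)^{\ve(\tau)} \prod_{i \,:\, \si_i \ne \id} (-1)^{|p_i|(|p_i|-1)/2}$, where $(-1)^{\ve(\tau)} = \prod_{i<j,\,\tau(i)>\tau(j)} (-1)^{|p_i||p_j|}$ is the Koszul sign from rearranging tensor factors of degrees $|p_i|$. On the other hand, the edge-label permutation $\si_\vf \in \bbS_k$ decomposes as a block permutation (the blocks of sizes $|p_1|, \dots, |p_e|$ are shuffled by $\tau$) composed with the reversal of each block singled out by a non-trivial $\si_i$. The sign of swapping two adjacent blocks of sizes $m$ and $n$ is $(-1)^{mn}$, so the block-permutation sign is exactly $(-1)^{\ve(\tau)}$; the sign of reversing a block of $n$ elements is $(-1)^{n(n-1)/2}$, matching the $\bbS_2$ factors. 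Hence $\sgn(\si_\vf) = (-1)^{\ve(\tau)} \prod_{i \,:\, \si_i \ne \id} (-1)^{|p_i|(|p_i|-1)/2}$, which gives $g \cdot X = \sgn(\si_\vf)\, X'$ as desired. The main obstacle is this bookkeeping — in particular, confirming that the identification $p'_j = \si_{\tau^{-1}(j)}(p_{\tau^{-1}(j)})$ is consistent for all three blocks of edges (loops are the most delicate, since their expansions wrap around and the direction of a loop in $\Gim$ is only meaningful up to reversal), and checking that the block-permutation and block-reversal signs combine correctly in every case.
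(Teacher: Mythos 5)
Your proof is correct and follows essentially the same route as the paper's: descend $\vf$ to an automorphism $\ti{\vf}$ of the frame, read off the block permutation $\tau \in \bbS_{e_{\bul}}\times\bbS_{e_{-}}\times\bbS_{e_{\c}}$ and the reversal flags $\si_i \in \bbS_2$, and take $g = (\tau;\si_1,\dots,\si_e)$. The only difference is that you spell out the sign bookkeeping (Koszul block-permutation sign versus block-reversal sign $(-1)^{n(n-1)/2}$) that the paper's case analysis leaves implicit, which is a welcome addition rather than a divergence.
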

\begin{proof}
Let $\ti{\vf}$ be the automorphism of $\Gim$ corresponding to the isomorphism $\vf : \G \to \G'$
and $\tau$ be the permutation in $\bbS_{e_{\bul}} \times \bbS_{e_{-}} \times \bbS_{e_{\c}}$ coming from $\ti{\vf}$.   

By construction, the edges of $\G$ (resp. $\G'$) are in bijection with 
symbols of the monomial in $X$ (resp. $X'$). Hence the isomorphism $\vf$ gives 
us a bijection from the set of symbols of $X$ to the set of symbols of $X'$. 
Furthermore, for every $1 \le i \le e$ the isomorphism $\vf$ gives us a bijection 
from the set of symbols of $p_i$ to the set of symbols of $p'_{\tau(i)}$.  
 
Thus we set 
$$
g = (\tau; \si_{1}, \dots, \si_{e}) \in \cG_{\Gim}\,,
$$ 
where the elements $\si_i \in \bbS_2$ are specified by considering 
these three cases:  
\begin{itemize}

\item[{\bf Case 1:}]  $1 \le i \le e_{\bul}$, i.e. the $i$-th edge of $\Gim$ originates at 
a univalent vertex. Since the isomorphism $\vf$ is compatible with the directions of edges (in $\G$ and $\G'$), 
in this case we have  $p'_{\tau(i)} = p_{i}$.

\item[{\bf Case 2:}]   $e_{\bul} + 1 \le i \le e_{\bul} + e_{-}$, i.e. the $i$-th edge of $\Gim$ 
connects two distinct vertices of valencies $\ge 3$. In this case we have two possibilities:
if the automorphism $\ti{\vf}$ of $\Gim$ respects the directions of edges $i$ and $\tau(i)$, 
then $p'_{\tau(i)} = p_{i}$; 
if the automorphism $\ti{\vf}$ of $\Gim$ does not respect the directions of edges $i$ and $\tau(i)$, 
then $p'_{\tau(i)}$ coincides with $\si(p_i)$ up to the appropriate sign factor, where $\si= (12) \in \bbS_2$
and  $\si(p_i)$ is defined in \eqref{S2-action}. If the first possibility realizes, we set $\si_i = \id \in \bbS_2$. 
Otherwise, we set $\si_i = (12) \in \bbS_2$. 
 
\item[{\bf Case 3:}]   $e_{\bul} + e_{-} +1 \le i \le e$, i.e. the $i$-th edge of $\Gim$ is a loop.  
Here we have four possibilities. First, if $p'_{\tau(i)}  = p_i = a $ or $p'_{\tau(i)}  = p_i = b $ then we set  
we set $\si_i = \id \in \bbS_2$. Second, if $p'_{\tau(i)}  =a, ~p_i = b $ or  $p'_{\tau(i)}  =b, ~p_i = a $, then we 
set $\si_i = (12) \in \bbS_2$. Third, if  $p_i$ has more than one symbol and $\vf$ respects the 
orders on the edges coming from symbols of $p_i$ and $p'_{\tau(i)}$, then $p'_{\tau(i)} = p_i$ and 
we set  $\si_i = \id \in \bbS_2$. Fourth, if $p_i$ has more than one symbol and $\vf$ does not respect the 
orders on the edges coming from symbols of $p_i$ and $p'_{\tau(i)}$, then $p'_{\tau(i)}$ coincides with $\si(p_i)$ up 
to the appropriate sign factor and we set  $\si_i = (12) \in \bbS_2$.

\end{itemize}

This analysis shows that, for the element $g \in \cG_{\Gim}$ constructed in this way, 
$g(X) = X'$ or $g (X) = -X'$ depending on whether the permutation $\si_{\vf} \in \bbS_k$
is even or odd, respectively. 
\end{proof}

Setting $X = X'$ in Claim \ref{cl:grpaction}, we get the following obvious corollary:   
\begin{cor} 
\label{cor:when-Gamma-odd}
Let $\Gim$ be a frame with $e = e_{\bul} + e_{-} + e_{\c}$ edges and $\G$ 
be the graph reconstructed from the tensor product of monomials 
\[ 
X= p_{1} \tensor p_{2} \tensor \cdots \tensor p_{e}
\]
in \eqref{source-FGim} using $\Gim$. If $\G$ has an automorphism 
which induces an odd permutation on the set of edges (i.e. $\G$ is odd) 
then there exists an element $g \in \cG_{\Gim}$ such that $g (X) = - X.$  
$~~~\qed$
\end{cor}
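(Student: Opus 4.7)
The plan is to obtain Corollary \ref{cor:when-Gamma-odd} as a direct specialization of Claim \ref{cl:grpaction}, which has already been established. Given an even graph $\G$ reconstructed from $X = p_1 \tensor \cdots \tensor p_e$ via the frame $\Gim$, together with a (graph) automorphism $\vf \maps \G \to \G$ inducing an odd permutation $\si_\vf \in \bbS_k$ on the edge set, I would simply apply Claim \ref{cl:grpaction} with the choice $X' := X$ (so that the reconstructed graph $\G'$ equals $\G$) and with $\vf$ as the specified isomorphism.

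With this choice, both hypotheses of the claim are tautologically satisfied: $X$ and $X'$ reconstruct the \emph{same} graph $\G$ through $\Gim$, and $\vf$ is an isomorphism $\G \to \G' = \G$. The claim then distinguishes two cases according to the parity of $\si_\vf$; since we are in the odd case, it guarantees the existence of an element $g \in \cG_\Gim$ satisfying $g(X) = -X' = -X$, which is exactly the conclusion of the corollary.

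There is essentially no obstacle here beyond invoking the previous claim correctly: the work of constructing $g$ from $\vf$ through the automorphism $\ti{\vf}$ of $\Gim$, the associated permutation $\tau \in \bbS_{e_\bul} \times \bbS_{e_-} \times \bbS_{e_\c}$, and the edge-by-edge analysis of the three cases (edges incident to univalent vertices, edges between $\ge 3$-valent vertices, and loops) has already been carried out inside the proof of Claim \ref{cl:grpaction}. The only thing I would verify explicitly, for the reader's benefit, is that the sign produced by Claim \ref{cl:grpaction} indeed equals the sign of $\si_\vf$, so that an odd $\si_\vf$ yields $-X$ rather than $+X$; this is immediate from Remark \ref{rem:sign-si-vf}, since $\G$ is even and hence the sign of $\si_\vf$ is a well-defined invariant of the automorphism class. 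Thus the corollary reduces to a one-line application of the preceding claim.
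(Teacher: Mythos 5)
Your proposal is correct and is precisely the paper's proof: the corollary is obtained by setting $X' = X$ in Claim \ref{cl:grpaction} and taking $\vf$ to be the given automorphism, whose induced permutation $\si_{\vf}$ is odd, so the claim directly yields $g \in \cG_{\Gim}$ with $g(X) = -X$. One small terminological slip: a graph admitting an automorphism that induces an odd permutation on its edges is \emph{odd} in the sense of Definition \ref{dfn:even-odd} (not even), so your appeal to Remark \ref{rem:sign-si-vf} is unnecessary here --- Claim \ref{cl:grpaction} only uses the parity of the particular $\si_{\vf}$ being considered, not the well-definedness of that parity across all choices of isomorphism.
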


\begin{example}
\label{exam:P-Ppr-ker}
Let $\Gim$ be the frame shown in figure \ref{fig:frame-Gim} and 
$P, P'$ be the following tensor products of monomials in $U \otimes R^{\otimes \, 7}$
$$
P = b \otimes ( a b \otimes  a \otimes b \otimes b a \otimes a \otimes aba \otimes b )\,, 
\qquad 
P' = b \otimes ( a b \otimes  a \otimes b \otimes a \otimes b a \otimes bab \otimes a )\,, 
$$
The graph $\G$ (resp. $\G'$) reconstructed from $P$ (resp. $P'$) using the frame $\Gim$
is shown in figure \ref{fig:G-ogo-go} (resp. in figure \ref{fig:from-Ppr}).
\begin{figure}[htp] 
\centering 
\begin{tikzpicture}[scale=1, >=stealth']
\tikzstyle{w}=[circle, draw, minimum size=3, inner sep=1]
\tikzstyle{b}=[circle, draw, fill, minimum size=3, inner sep=1]
\node [b] (b1) at (0,0) {};
\draw (0,0.3) node[anchor=center, color = gray] {{\small $5$}};
\node [b] (b7) at (1.5,-1) {};
\draw (1.5, -1.3) node[anchor=center, color = gray] {{\small $7$}};
\node [b] (b2) at (3,0) {};
\draw (3,0.3) node[anchor=center, color = gray] {{\small $6$}};
\node [b] (b3) at (-2,1) {};
\draw (-2,1.3) node[anchor=center, color = gray] {{\small $2$}};
\node [b] (b6) at (-2,0) {};
\draw (-2.2,0) node[anchor=center, color = gray] {{\small $3$}};
\node [b] (b5) at (-4,1) {};
\draw (-4,1.3) node[anchor=center, color = gray] {{\small $1$}};
\node [b] (b4) at (-2,-1) {};
\draw (-1.7,-1.1) node[anchor=center, color = gray] {{\small $4$}};
\node [b] (b8) at (4,-0.5) {};
\draw (4,-0.8) node[anchor=center, color = gray] {{\small $9$}};
\node [b] (b9) at (4,0.5) {};
\draw (4,0.8) node[anchor=center, color = gray] {{\small $8$}};
\draw [->] (b2) edge (b8); 
\draw [->] (b9) edge (b8); 
\draw [->] (b9) edge (b2); 
\draw (3.5,-0.5) node[anchor=center] {{\small $\und{11}$}};
\draw (3.5,0.5) node[anchor=center] {{\small $\und{9}$}};
\draw (4.3,0) node[anchor=center] {{\small $\und{10}$}};
\draw [<-] (b1) edge (b7); \draw [->] (b7) edge (b2); 
\draw (0.6,-0.7) node[anchor=center] {{\small $\und{7}$}};
\draw (2.4,-0.7) node[anchor=center] {{\small $\und{8}$}};
\draw [->] (b1) ..controls (1,0.5) and (2,0.5) .. (b2);
\draw (1.5,0.56) node[anchor=center] {{\small $\und{6}$}};
\draw [->] (b1) edge (b3);
\draw (-0.9,0.7) node[anchor=center] {{\small $\und{4}$}};
\draw [->] (b4) edge (b1);
\draw (-0.9,-0.7) node[anchor=center] {{\small $\und{5}$}};
\draw [->] (b3) edge (b6);
\draw (-2.2, 0.5) node[anchor=center] {{\small $\und{2}$}};
\draw [<-] (b6) edge (b4);
\draw (-2.2, -0.5) node[anchor=center] {{\small $\und{3}$}};
\draw [<-] (b5) edge (b3);
\draw (-3,0.8) node[anchor=center] {{\small $\und{1}$}};
\draw (-2.4,-1.4) circle (0.55);
\draw (-2,-2.1) node[anchor=center] {{\small $\und{12}$}};
\end{tikzpicture}
\caption{The graph $\G'$ reconstructed from $P'$ using the frame $\Gim$} \label{fig:from-Ppr}
\end{figure}
We have the obvious isomorphism $\vf : \G \to \G'$. This isomorphism acts by identity
on the set of vertices $\{1,2,\dots, 9\}$ and the action on the ordinals of edges is 
defined by this permutation in $\bbS_{12}$
$$
\si_{\vf} = (6,7,8) (9,11)\,.
$$

The automorphism $\ti{\vf} \in \Aut(\Gim)$ corresponding to the isomorphism $\vf$
is the only non-trivial automorphism, i.e. the one which flips the double edges
$\und{5}$ and $\und{6}$ of $\Gim$. So the permutation $\tau \in \bbS_8$ is the transposition $(5,6)$. 

Finally, we see that $P' = - (\tau; 1,1,1,1,1,1, \si, \si)\, (P)\,, $
where $\tau = (5,6) \in \bbS_8$ and $\si = (1,2) \in \bbS_2$\,.
It agrees with the fact that the permutation $\si_{\vf} = (6,7,8)(9,11)$ is odd. 
Thus, $F_{\Gim}(P) = - F_{\Gim}(P')$\,.
\end{example}

\begin{example}
\label{exam:kissing-loops}
Let $\Gim_{\lp \lp}$ is the frame shown  in figure \ref{fig:kissing-loops}
and $Q = aba \otimes aba$. The graph $\G_{Q}$ reconstructed from $Q$ using $\Gim_{\lp \lp}$
is shown in figure \ref{fig:kissing-triangles}. As we saw above, $\G_Q$ has the obvious automorphism
$\vf$ which ``switched the triangles'' and induces the permutation $\si_{\vf} = (1,4)(2,5)(3,6) \in \bbS_6$. 
It is clear that $Q = - (\tau; 1,1)\, (Q),$ where $\tau = (1,2)$\,.
\end{example}

Let now use Claim \ref{cl:grpaction} and Corollary \ref{cor:when-Gamma-odd} to 
describe the kernel of the map $F_{\Gim}$ \eqref{F-Gim}: 

\begin{claim} 
\label{cl:ker-F-Gim}
Let $\Gim \in \Frame_{n}$ be a frame with $e$ edges 
$$
e = e_{\bul} + e_{-} + e_{\c}\,,
$$
where $e_{\bul}$ is the number of edges of $\Gim$ 
adjacent to univalent  vertices, $e_{\c}$ is the 
number of loops and $e_{-} = e - e_{\bul} - e_{\c}$\,.
Then the kernel of $F_{\Gim}$ is spanned 
by vectors of the form 
\begin{equation}
\label{ker-F-Gim}
X -  g (X)\,,
\end{equation}
where $X$ is a vector in \eqref{source-FGim} and $g \in \cG_{\Gim}$.
\end{claim}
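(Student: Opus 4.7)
The plan is to prove both inclusions. For the inclusion that every vector $X - g(X)$ lies in $\ker F_{\Gim}$, I will verify directly that $F_{\Gim}(g(X)) = F_{\Gim}(X)$ for $g$ running over a set of generators of $\cG_{\Gim}$. For a tensor-factor permutation $\tau \in \bbS_{e_{\bul}} \times \bbS_{e_{-}} \times \bbS_{e_{\c}}$, the action rearranges the monomials $p_i$ and thereby permutes the corresponding blocks of edge labels in the reconstructed graph; the Koszul sign $(-1)^{\ve(\tau)}$ in the definition of the $\tau$-action coincides with the sign produced by this block permutation in $\dfGCo$, while the accompanying change of vertex labels is absorbed by the $\bbS_n$-coinvariants. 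For a generator $\si = (1,2) \in \bbS_2$ acting on an $R$-factor $p_i$, the simultaneous order reversal together with the swap $a \leftrightarrow b$ corresponds to traversing the associated chain of edges of $\G$ in the opposite direction and relabeling its internal vertices; the prefactor $(-1)^{n(n-1)/2}$ in \eqref{S2-action} is precisely the sign produced by reversing the order of the $n$ edge labels of the chain. This verification is essentially Claim \ref{cl:grpaction} read in reverse, applied with $X = X'$ and with $\vf$ the automorphism of the reconstructed graph induced by $g$.

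For the reverse inclusion, let $Y = \sum_{k} c_k X_k$ be an arbitrary element of $\ker F_{\Gim}$, where the $X_k$ range over the standard monomial basis of the source of \eqref{F-Gim}. I partition the index set $\{k\}$ into $\cG_{\Gim}$-orbits: by Claim \ref{cl:grpaction}, two monomials $X$ and $X'$ lie in the same orbit up to sign if and only if the reconstructed graphs $F_{\Gim}(X)$ and $F_{\Gim}(X')$ are isomorphic as directed graphs. Since monomials from distinct orbits produce non-isomorphic graphs and hence linearly independent classes in $\dfGCo$, the portion of $Y$ supported on each orbit lies in $\ker F_{\Gim}$ separately, and it suffices to treat one orbit at a time.

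Fix an orbit with representative $X_1$ and, for each index $k$ in the orbit, write $X_k = \eps_k g_k(X_1)$ with $\eps_k \in \{\pm 1\}$ and $g_k \in \cG_{\Gim}$, as supplied by Claim \ref{cl:grpaction}. The tautological identity
\begin{equation*}
Y ~=~ \sum_k c_k \eps_k \, g_k(X_1) ~=~ \Bigl(\sum_k c_k \eps_k\Bigr)\, X_1 ~-~ \sum_k c_k \eps_k \, \bigl(X_1 - g_k(X_1)\bigr)
\end{equation*}
handles the case where the graph reconstructed from $X_1$ is even: in that case $F_{\Gim}(X_1) \ne 0$, so the hypothesis $F_{\Gim}(Y) = 0$ forces $\sum_k c_k \eps_k = 0$, and $Y$ is thereby presented as a linear combination of vectors of the desired form. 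If instead the reconstructed graph is odd, Corollary \ref{cor:when-Gamma-odd} furnishes for each $X_k$ some $h_k \in \cG_{\Gim}$ with $h_k(X_k) = -X_k$, so that $X_k = \tfrac{1}{2}\bigl(X_k - h_k(X_k)\bigr)$ already lies in the desired span (this is the only place where characteristic zero is needed).

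The main subtlety will be the sign bookkeeping in the first step, where the Koszul conventions for the tensor-factor permutations, the prefactor $(-1)^{n(n-1)/2}$ in \eqref{S2-action}, and the sign relations inherent to the $\bbS_r$-coinvariants in the definition of $\dfGCo$ must all be shown compatible; the remaining organization of the proof is structured directly around Claim \ref{cl:grpaction} and Corollary \ref{cor:when-Gamma-odd}.
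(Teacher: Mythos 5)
Your proposal is correct and follows essentially the same route as the paper: both inclusions rest on Claim \ref{cl:grpaction} and Corollary \ref{cor:when-Gamma-odd}, with the forward inclusion checked via the compatibility of the $\cG_{\Gim}$-action with isomorphisms of reconstructed graphs and the reverse inclusion obtained by grouping monomials according to the isomorphism type of the reconstructed graph. The one (minor, beneficial) difference is in the last step: where the paper pairs up the graphs $\G_i$ into concordant/opposite couples with matched coefficients, you decompose into $\cG_{\Gim}$-orbits and use the telescoping identity $\sum_k c_k\eps_k g_k(X_1) = (\sum_k c_k\eps_k)X_1 - \sum_k c_k\eps_k(X_1-g_k(X_1))$, which handles relations involving more than two concordant monomials without any pairing and is a slightly more robust way to finish.
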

\begin{proof} Let us show that, for every tensor product of monomials $X$ in \eqref{source-FGim}
and for every $g \in \cG_{\Gim}$, 
\begin{equation}
\label{req-inclusion}
X - g (X)  \in  \ker F_{\Gim}\,.
\end{equation}

If the graph $\G$ reconstructed from $X$ is odd then  $F_{\Gim}(X) = 0$
and, similarly, $F_{\Gim}(g(X)) = 0$. So let us assume that $\G$ is even.

If the element $g$ induces an even permutation on the ordinal of edges of $\G$ then 
\begin{itemize}

\item[{\it i)}] $g(X)$ is also a tensor product of monomials, 

\item[{\it ii)}] $g$ gives us an isomorphism from $\G$ to 
the graph $\G'$ reconstructed from $g(X)$, and

\item[{\it iii)}] this isomorphism induces an even permutation in $\bbS_k$, where $k$ is the number of edges of $\G$
(i.e. the graphs $\G$ and $\G'$ are\footnote{See Definition \ref{dfn:concordant} and Remark \ref{rem:sign-si-vf}.}
concordant).  

\end{itemize}

Thus $\G = \G'$ in $\dfGCo$ and, in this case, inclusion \eqref{req-inclusion} holds.

If the element $g$ induces an odd permutation on the ordinal of edges of $\G$ then 
\begin{itemize}

\item[{\it i)}] $-g(X)$ is a tensor product of monomials, 

\item[{\it ii)}] $g$ gives us an isomorphism from $\G$ to 
the graph $\G'$ reconstructed from $-g(X)$, and

\item[{\it iii)}] this isomorphism induces an odd permutation in $\bbS_k$, where $k$ is the number of edges of $\G$.  
(i.e. the graphs $\G$ and $\G'$ are opposite).

\end{itemize}

Thus $\G + \G' = 0$ in $\dfGCo$ and, in this case, inclusion \eqref{req-inclusion} also holds. 

Let us now consider a tensor product of monomials $Y$ in  \eqref{source-FGim} 
satisfying the property
\begin{equation}
\label{F-Gim-Y-0}
F_{\Gim}(Y) = 0\,.
\end{equation}
The latter means that the graph $\G$ reconstructed from $Y$ is odd, i.e.\ there 
exists an automorphism $\vf$ of $\G$ which induces an odd permutation on the set of edges of $\G$\,.
Corollary \ref{cor:when-Gamma-odd} implies that there exists $g \in \cG_{\Gim}$ such that $Y = - g (Y)$. 
Hence, $Y = \frac{1}{2} (Y - g(Y))$.
Thus every tensor product of monomials $Y$  in  \eqref{source-FGim}
satisfying equation \eqref{F-Gim-Y-0} belongs to the span
of vectors of the form \eqref{ker-F-Gim}. 

Let us now consider a linear combination 
\begin{equation}
\label{combin-Ys}
c_1 Y_1 + c_2 Y_2 + \dots + c_m Y_m\,, \qquad c_i \in \bbK 
\end{equation}
of tensor products of monomials $Y_1, \dots, Y_m$ in   \eqref{source-FGim} such that 
\begin{equation}
\label{F-Gim-Y-s}
\sum_i c_i F_{\Gim}(Y_i) = 0\,.
\end{equation}

Due to the above observation about monomials 
satisfying \eqref{F-Gim-Y-0} we may assume, without loss 
of generality, that 
$$
F_{\Gim}(Y_i)  \neq 0 \qquad \forall~~ 1 \le i \le m\,. 
$$ 
In other words, the graph $\G_i$ reconstructed from $Y_i$ is even 
for every $1 \le i \le m$. 

We may also assume, without loss of generality, that 
the graphs $\{\G_i \}_{1 \le i \le m}$ have the same number 
of vertices $n+n'$.

Thus, for every $1 \le i \le m$, the graph $\G_i \in \dgra_{n+n'}$
is even.

By definition of $\dfGCo$, the number $m$ is even and the set of graphs 
$\{\G_i \}_{1 \le i \le m}$ splits into pairs
$$
(\G_{i_{t}}, \G_{i'_{t}})\,, \qquad t \in  \{1, \dots, m/2 \}
$$
such that for every $t$ the graphs $\G_{i_{t}}$  and $ \G_{i'_{t}}$
are either opposite or concordant. For every 
pair $(\G_{i_{t}}, \G_{i'_{t}})$ of opposite graphs we have 
\begin{equation}
\label{for-opposite}
c_{i_t} = c_{i'_t}\,.  
\end{equation}
For every pair $(\G_{i_{t}}, \G_{i'_{t}})$ of concordant graphs we have 
\begin{equation}
\label{for-concordant}
c_{i_t} = -c_{i'_t}\,.  
\end{equation}

Let $e_t$ denote the number of edges of $\G_{i_t}$ (or  $\G_{i'_t}$) and 
let $\vf_t$ be the isomorphism from $\G_{i_t}$ to $\G_{i'_t}$ which 
induces an odd or even permutation in $\bbS_{e_t}$
depending on whether  $\G_{i_t}$ and $\G_{i'_t}$ are  opposite or 
concordant. Let $g_t \in \cG_{\Gim}$ be the element induced by the isomorphism $\vf_t$
as in Claim \ref{cl:grpaction}.
Equations \eqref{for-opposite} and \eqref{for-concordant} and Claim \ref{cl:grpaction} imply that
$$
\sum_{i=1}^m c_i Y_i = 
\sum_{t=1}^{m/2} c_{i_t} (Y_{i_t} - g_t (Y_{i_t}))\,.
$$

In other words, the linear combination \eqref{combin-Ys} belongs 
to the span of vectors of the form \eqref{ker-F-Gim} and the claim
follows. 
\end{proof}

\subsubsection{Description of the associated graded complex $\Gr( \dfGC^{\oplus}_{\conn, \ge 3} )$} 

Now we are ready to give a convenient description of $\Gr( \dfGC^{\oplus}_{\conn, \ge 3} )$\,. 
\begin{claim}
\label{cl:Gr-sfgraphs}
Let us choose a representative $\Gim_z$ for every isomorphism class 
$z \in \pi_0(\Frame_{n})$\,. Let 
$$
e^z = e^z_{\bul} + e^{z}_{-} + e^{z}_{\c}
$$
be the number of edges of $\Gim_z$, where $e^z_{\bul}$ is the number of edges 
adjacent to univalent  vertices, $e^z_{\c}$ is the 
number of loops and $e^z_{-}$ is the number of the remaining edges. 
Then the cochain complex  $\Gr( \dfGC^{\oplus}_{\conn, \ge 3} )$ splits into the 
direct sum 
$$
\Gr( \dfGC^{\oplus}_{\conn, \ge 3}) \cong 
$$
\begin{equation}
\label{Gr-sfgraphs-desc}
\bigoplus_{n \ge 1}~
\bigoplus_{z \in \pi_0(\Frame_{n})}~
\bs^{2 n - 2 -2 e^z} 
\bigl( U^{\otimes\, e^z_{\bul}} \otimes R^{\otimes \, (e^z_{-} + e^z_{\c})} \bigr)_{\cG_{\Gim_{z}}} \,,
\end{equation}
where $U$ and $R$ are the cochain complexes introduced in \eqref{UR}. 
\end{claim}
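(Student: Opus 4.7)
The plan is to exhibit a natural isomorphism by showing that every graph in $\Gr(\dfGCo_{\conn, \ge 3})$ has a canonical underlying frame, and that $F_{\Gim_z}$ realizes an isomorphism from the coinvariants onto the summand indexed by the isomorphism class $z$.

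First, I would associate to every even connected graph $\G \in \dgra_m^r$ in $\dfGCo_{\conn, \ge 3}$ a canonical frame $\Gim_\G$ by ``contracting'' maximal subpaths of bivalent vertices. Concretely, the vertices of $\Gim_\G$ are the univalent and $\ge 3$-valent vertices of $\G$, and the edges of $\Gim_\G$ correspond to the maximal bivalent chains in $\G$ (each of which either joins two non-bivalent vertices or forms a loop at one such vertex). The hypothesis that $\G$ has at least one vertex of valency $\ge 3$ guarantees that $\Gim_\G$ does too, so $\Gim_\G$ is a frame in the sense of Section \ref{sec:dfGCo-conn-ge3} after choosing an admissible labeling and orientation. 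The assignment $\G \mapsto [\Gim_\G] \in \pi_0(\Frame_n)$ (with $n$ the number of non-bivalent vertices of $\G$) splits the underlying graded vector space of $\dfGCo_{\conn, \ge 3}$ as a direct sum over $\bigsqcup_{n \ge 1} \pi_0(\Frame_n)$.

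Second, I would check that each $F_{\Gim_z}$ lands in the $z$-indexed summand and is surjective onto it. Surjectivity holds because every $\G$ with $[\Gim_\G] = z$ can be reconstructed from the frame $\Gim_z$ by decorating each frame edge with a monomial in $\cV_2^{\otimes k}$ that records the lengths and directions of the corresponding bivalent chain in $\G$. Claim \ref{cl:ker-F-Gim} then identifies $\ker F_{\Gim_z}$ as the span of vectors $X - g(X)$, so $F_{\Gim_z}$ descends to a graded vector space isomorphism
\[
\bs^{2n - 2 - 2 e^z}\bigl(U^{\otimes\, e^z_{\bul}} \otimes R^{\otimes\, (e^z_{-} + e^z_{\c})}\bigr)_{\cG_{\Gim_{z}}} \;\stackrel{\sim}{\longrightarrow}\; \text{($z$-summand)}.
\]
The degree shift $\bs^{2n-2-2e^z}$ is forced by the numerology: a tensor product of monomials with $\sum k_i$ total symbols has degree $\sum k_i$ in the source, and is sent to a graph with $n - e^z + \sum k_i$ vertices and $\sum k_i$ edges, hence degree $2n - 2e^z - 2 + \sum k_i$.

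Finally, I would verify compatibility with the differentials. Equation \eqref{F-Gim-diff} shows that $F_{\Gim_z}$ intertwines $\delta = \mb_1 \otimes 1 + 1 \otimes \mb_2$ with $\pa^{\Gr}$; a direct inspection of \eqref{S2-action} and the definitions of $\mb_1, \mb_2$ shows that $\delta$ commutes with the action of $\cG_{\Gim_z}$, so it descends to the coinvariants. The key point to confirm is that $\pa^{\Gr}$ preserves the frame decomposition: the associated graded differential retains only those terms in $\pa$ that strictly increase the number of bivalent vertices, and these correspond to inserting $\G_{\ed}$ so as to create a bivalent vertex in the interior of a bivalent chain (or at the end of a hair, etc.), which does not alter the underlying frame. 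Once this is established, assembling the chain isomorphisms over all $z \in \pi_0(\Frame_n)$ and all $n \ge 1$ yields the decomposition \eqref{Gr-sfgraphs-desc} as cochain complexes. The principal obstacle is the careful bookkeeping of edge labelings, orientations, and sign conventions, but this is largely absorbed into the action of $\cG_{\Gim_z}$ and so is handled uniformly by passing to coinvariants.
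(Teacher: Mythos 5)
Your proposal is correct and follows essentially the same route as the paper: the paper likewise observes that $F_{\Gim_z}$ is a chain map by \eqref{F-Gim-diff}, invokes Claim \ref{cl:ker-F-Gim} to identify its kernel with the span of the vectors $X - g(X)$ and hence obtain an isomorphism onto $\Im(F_{\Gim_z})$ from the coinvariants, and then notes that $\Gr(\dfGC^{\oplus}_{\conn,\ge 3})$ is the direct sum of these images over all $n$ and all $z\in\pi_0(\Frame_n)$. The only difference is that you spell out what the paper calls ``obvious'' --- the canonical frame obtained by contracting maximal bivalent chains, the degree bookkeeping, and the compatibility of $\de$ with the $\cG_{\Gim_z}$-action --- all of which is sound.
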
 

\begin{proof}
Let us recall that the map $F_{\Gim_z}$ \eqref{F-Gim} is a morphism 
from the cochain complex 
$$
\bs^{2 n - 2 -2 e^z}  U^{\otimes\, e^z_{\bul}} \otimes R^{\otimes \, (e^z_{-} + e^z_{\c})} 
$$
to $\Gr( \dfGC^{\oplus}_{\conn, \ge 3})$\,. 

Thus, Claim \ref{cl:ker-F-Gim} implies that $F_{\Gim_z}$ induces 
an isomorphism from the cochain complex of 
coinvariants
$$
\bs^{2 n - 2 -2 e^z} 
\bigl( U^{\otimes\, e^z_{\bul}} \otimes R^{\otimes \, (e^z_{-} + e^z_{\c})} \bigr)_{\cG_{\Gim_{z}}}
$$
to the subcomplex  
$$
\Im(F_{\Gim_z}) \subset  \Gr( \dfGC^{\oplus}_{\conn, \ge 3})\,.
$$

On the other hand, the cochain complex $\Gr( \dfGC^{\oplus}_{\conn, \ge 3})$ 
is obviously the direct sum  
\begin{equation}
\label{sum-over-Gim}
\Gr( \dfGC^{\oplus}_{\conn, \ge 3}) =
\bigoplus_{n \ge 1}~
\bigoplus_{z \in \pi_0(\Frame_{n})}~ \Im(F_{\Gim_z})\,.
\end{equation}

Thus, the desired statement follows. 
\end{proof}

\subsubsection{The end of the proof of Lemma \ref{lem:Gr}}
We now have all we need to prove Lemma \ref{lem:Gr}. 
Let $\Gim_z$ be a representative of an isomorphism class $z \in \pi_0(\Frame_{n})$
and let 
$$
e^z = e^z_{\bul} + e^{z}_{-} + e^{z}_{\c}
$$
be the number of edges of $\Gim_z$, where $e^z_{\bul}$ is the number of edges 
adjacent to univalent vertices, $e^z_{\c}$ is the 
number of loops and $e^z_{-}$ is the number of the remaining edges.

Let us consider the cochain complex 
\begin{equation} 
\label{cohom_cmplx}
\bs^{2 n - 2 -2 e^z}  U^{\otimes\, e^z_{\bul}} \otimes R^{\otimes \, (e^z_{-} + e^z_{\c})}\,. 
\end{equation}

Due to Claim \ref{cl:U-and-R} from Appendix \ref{app:cohomol-T-V2}, the cochain complex 
$U$ is acyclic, 
$$
H^k (R) = \begin{cases}
 \bbK \qquad  {\rm if} ~~ k=1\,, \\
 \bfzero \qquad \textrm{otherwise}\,,
\end{cases}
$$ 
and the $H^1(R)$ is spanned by the class represented by the cocycle $(a+b)$. 

Thus K\"{u}nneth's theorem implies that 
$$
H^k \Big( \bs^{2 n - 2 -2 e^z}  U^{\otimes\, e^z_{\bul}} \otimes R^{\otimes \, (e^z_{-} + e^z_{\c})} \Big) =
\begin{cases}
\bbK  \qquad {\rm if} ~~  e^z_{\bul} =0~ \textrm{ and }~ k = 2n - 2 - e^z\,, \\
\bfzero  \qquad {\rm otherwise}\,.
\end{cases}
$$

Combining this observation with Claim \ref{cl:Gr-sfgraphs}, we conclude that 
\eqref{Gr-sfgraphs-k-m} and \eqref{sfgraphs-OK} indeed hold.  

Thus Lemma \ref{lem:Gr} is proved. \qed

\section{On the subcomplex of 1-vertex irreducible graphs $\GC_{1ve}$}
\label{sec:GC1ve-to-GC}

The goal of this section is to prove the second part of Proposition \ref{prop:GCo-dfGCo3}, 
i.e. that the chain map 
\begin{equation}
\label{from-GCo1ve}
\GCo_{1ve}  \hookrightarrow \GCo
\end{equation}
is a quasi-isomorphism of cochain complexes. 
 
The corresponding statement for the (commutative) graph complex 
introduced in \cite{K-noncom-symp} was proved in 
\cite{C-Vogtmann}. However, since the (commutative) graph complex 
from \cite{K-noncom-symp} is quite different 
from $\GCo$, we decided to write a detailed proof of the fact 
that \eqref{from-GCo1ve} is a quasi-isomorphism. 
Needless to say, this proof is similar to the one given 
in \cite{C-Vogtmann} for the (commutative) graph complex 
from \cite{K-noncom-symp}.

\subsection{The filtration by the number of separating edges}
\label{sec:dual-GC}
Let $\G$ be a connected graph whose all vertices have valencies $\ge 3$. 
Recall that $i \in V(\G)$ is a {\it cut vertex} if $\G$ becomes disconnected upon 
deleting $i$. Furthermore, we call $e \in E(\G)$ a {\it separating edge} if $\G$ becomes 
disconnected upon deleting $e$. For a graph $\G$ with at least one cut vertex, 
we denote by $\G^{\red}$ the graph 
which is obtained from $\G$ by contracting all separating edges. If $\G$ does not have 
separating edges, we simply set $\G^{\red} : =  \G$.

To prove that \eqref{from-GCo1ve} is a quasi-isomorphism, we denote by $ \QCo$
the quotient complex 
\begin{equation}
\label{QCo}
\QCo\, : = \,  \GCo \, \big/\, \GCo_{1ve}
\end{equation}
and observe that $\QCo$ is the span of isomorphism classes of even connected
graphs with at least one cut vertex and with all vertices having valency $\ge 3$.  

Next, we equip  the complex $\QCo$ with the following ascending filtration 
\begin{equation}
\label{eq:filt_sepedge}
\dots \subset \cF_{m-1} \QCo
\subset  \cF_m \QCo  \subset  \cF_{m+1} \QCo  \subset \dots
\end{equation}
where $\cF_m \QCo$ consists of vectors  $\ga \in \QCo$
which only involve graphs $\G$ satisfying the inequality
$$
\text{$\#$ of separating edges of $\G$}  -  |\ga|  \le m.
$$

To describe the associated graded cochain complex, we consider 
graphs $\beth$ which fulfill the condition:
\begin{cond} 
\label{cond:beth}
$\beth$ is a connected graph with all vertices having valency $\ge 3$. 
$\beth$ has at least 1 cut vertex but it does not have separating edges. 
\end{cond}
An example of a graph satisfying the above condition is shown in figure \ref{fig:beth}.
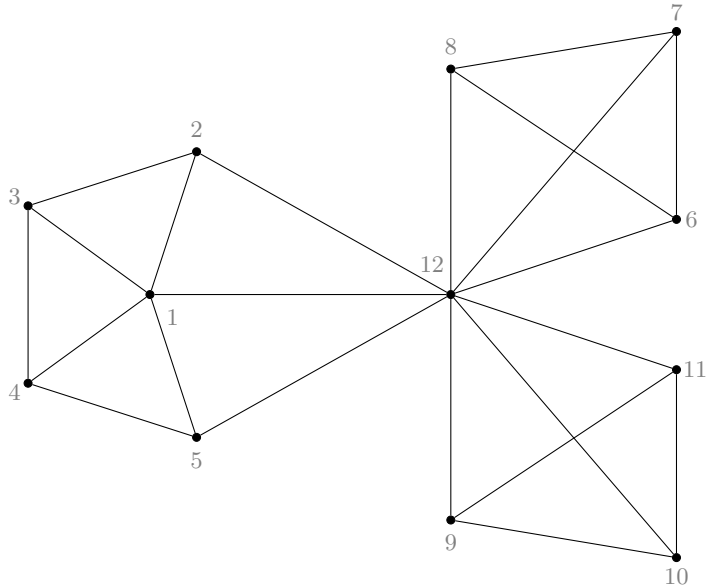
\begin{figure}[htp]
\centering 
\begin{tikzpicture}[scale=1, >=stealth']
\tikzstyle{w}=[circle, draw, minimum size=3, inner sep=1]
\tikzstyle{b}=[circle, draw, fill, minimum size=3, inner sep=1]
\node [b] (b0) at (0,0) {};
\draw (0.3,-0.3) node[anchor=center, color =gray] {{\small $1$}};
\node [b] (b1) at (4,0) {}; 
\draw (3.75,0.4) node[anchor=center, color =gray] {{\small $12$}};
\node [b] (b2) at (0.62, 1.90) {};
\draw (0.62, 2.2) node[anchor=center, color =gray] {{\small $2$}};
\node [b] (b3) at (-1.62, 1.18) {};
\draw (-1.8, 1.3) node[anchor=center, color =gray] {{\small $3$}};
\node [b] (b4) at (-1.62, -1.18) {};
\draw (-1.8, -1.3) node[anchor=center, color =gray] {{\small $4$}};
\node [b] (b5) at (0.62, -1.90) {};
\draw (0.62, -2.2) node[anchor=center, color =gray] {{\small $5$}};
\draw (b1) edge (b2) (b2) edge (b3) 
(b3) edge (b4) (b4) edge (b5) (b5) edge (b1);
\draw (b0) edge (b1)  edge (b2) edge (b3) edge (b4)  edge (b5);
\node [b] (t1) at (7,1) {};
\draw (7.2, 1) node[anchor=center, color =gray] {{\small $6$}};
\node [b] (t2) at (4,3) {};
\draw (4, 3.3) node[anchor=center, color =gray] {{\small $8$}};
\node [b] (t3) at (7,3.5) {};
\draw (7, 3.75) node[anchor=center, color =gray] {{\small $7$}};
\draw (b1) edge (t1) edge (t2) edge (t3) (t1) edge (t2) edge (t3)  (t2) edge (t3);
\node [b] (tt1) at (7,-1) {};
\draw (7.25, -1) node[anchor=center, color =gray] {{\small $11$}};
\node [b] (tt2) at (4,-3) {};
\draw (4, -3.3) node[anchor=center, color =gray] {{\small $9$}};
\node [b] (tt3) at (7,-3.5) {};
\draw (7, -3.75) node[anchor=center, color =gray] {{\small $10$}};
\draw (b1) edge (tt1) edge (tt2) edge (tt3) (tt1) edge (tt2) edge (tt3)  (tt2) edge (tt3);
\end{tikzpicture}
\caption{An example of a graph $\beth$ satisfying Condition \ref{cond:beth}. The edges 
are ordered lexicographically} \label{fig:beth}
\end{figure}

We denote by $\Home$ the set of isomorphism classes of such graphs $\beth$.
For every $z\in \Home$, we denote by $\beth_z$ once and for all chosen 
representative of $z$. Finally, we denote by 
\begin{equation}
\label{GCo-beth}
\QCo_{\beth}
\end{equation}
the subspace of $\QCo$ spanned by graphs $\G$ for which 
$\G^{\red}$ is isomorphic to $\beth$. 

Since the differential $\pa_{\Gr}$ of the associated graded complex can 
only raise the number of separating edges, the subspace 
$\QCo_{\beth}$ is closed with respect to $\pa_{\Gr}$.
Thus the associated graded complex  $\Gr_{\cF} \QCo$ of $\QCo$  
splits into the direct sum 
\begin{equation}
\label{Gr-QCo}
\Gr_{\cF} \QCo  \cong  \bigoplus_{z\in \Home} \QCo_{\beth_z}\,.
\end{equation}

We claim that 
\begin{prop}
\label{prop:QC-beth-acyclic}
For every $z \in \Home$ the cochain complex 
$$
(\QCo_{\beth_z}, \pa_{\Gr})
$$
is acyclic. 
\end{prop}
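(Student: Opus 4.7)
The plan is to reduce the acyclicity of $\QCo_{\beth_z}$ to the acyclicity of local combinatorial complexes attached to the cut vertices of $\beth_z$. First I would analyze the structure of the graphs $\G \in \QCo_{\beth_z}$: the separating edges of such a $\G$ form a forest $F$, and contracting $F$ yields $\beth_z$, so each vertex $v$ of $\beth_z$ corresponds to a connected component $T_v$ of $F$ (a tree), and the edges of $\beth_z$ at $v$ (the ``external legs'') attach to vertices of $T_v$. Requiring that every edge of $T_v$ be separating in $\G$ translates into the following local constraint: if $\beth_z \setminus \{v\}$ has connected components $C_1, \dots, C_{k_v}$ partitioning the external legs at $v$ into blocks $E_1, \dots, E_{k_v}$, then all legs of a given block $E_i$ must attach to one and the same vertex of $T_v$. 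For non-cut vertices $v$ (where $k_v = 1$) the valency-$\ge 3$ condition then forces $T_v$ to be a single vertex, so the non-trivial expansion is concentrated at the cut vertices of $\beth_z$.

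For each cut vertex $v^* \in V(\beth_z)$ with $k = k_{v^*} \ge 2$ blocks $E_1, \dots, E_k$, I would introduce the local cochain complex $\cC_{v^*}$ spanned by pairs $(T, \beta)$, where $T$ is a tree (with labeled edges taken up to the sign conventions of the graph complex) and $\beta\colon \{1, \dots, k\} \to V(T)$ assigns the block $E_i$ to a vertex of $T$, subject to the condition that every vertex of $T$ has total valency $\ge 3$ once its external legs are attached. The differential, inherited from the restriction of $\pa_{\Gr}$, splits a vertex of $T$ in a way that preserves the block structure. The structural analysis above then yields an isomorphism of cochain complexes
\begin{equation*}
\QCo_{\beth_z} ~\cong~ \Bigl( \, \bs^{2 n_{\beth_z} - 2 - r_{\beth_z}} \, \bigotimes_{v^* \in V(\beth_z)^{\mathrm{cut}}} \cC_{v^*} \, \Bigr)_{\Aut(\beth_z)},
\end{equation*}
where the shift records the fixed contribution of $\beth_z$ and the tensor product runs over the (finite) set of cut vertices.

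It then suffices to prove that each $\cC_{v^*}$ is acyclic, which I propose to establish via an explicit contracting homotopy. Singling out the first block $E_1$, define a degree $-1$ operator $h$ on $\cC_{v^*}$ by setting $h(T,\beta) = 0$ unless $\beta(1)$ is a leaf of $T$ that carries no block other than $E_1$; in that case $h(T,\beta)$ is the local datum obtained by contracting the unique edge of $T$ at $\beta(1)$ and reassigning $E_1$ to the resulting merged vertex. A direct case analysis---easily verified on the two-block case (where $\cC_{v^*}$ reduces to $\bbK \to \bbK$) and the three-block case (with graded dimensions $1,3,3,1$)---shows that, with the signs inherited from the graph complex, one has $h\,\pa_{\Gr} + \pa_{\Gr}\,h = \id_{\cC_{v^*}}$, proving acyclicity.

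To conclude, by the K\"{u}nneth formula the tensor product $\bigotimes_{v^*} \cC_{v^*}$ is acyclic, and $\Aut(\beth_z)$-coinvariants of an acyclic complex in characteristic zero remain acyclic (by Maschke's theorem, coinvariants coincide with invariants and both are exact). The main technical obstacle I anticipate is twofold: the bookkeeping in the tensor-product decomposition (ensuring the isomorphism is $\Aut(\beth_z)$-equivariant and compatible with the sign conventions attached to edge labels in the graph complex), and the sign verification for the homotopy formula, where the case analysis regarding the position of $\beta(1)$ in $T$---whether $\beta(1)$ is a leaf, whether other blocks share that vertex, and what the result of contraction looks like relative to splittings of adjacent vertices---must be carried out carefully.
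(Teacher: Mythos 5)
Your proposal is correct, and it reorganizes the argument in a way that genuinely differs from the paper's proof, even though the two rest on the same underlying contracting homotopy. The paper does not decompose $\QCo_{\beth_z}$ at all: it introduces a partially labeled model $C_{\beth}$ (retaining labels on the non-cut vertices and non-separating edges of $\beth$), builds a single \emph{global} homotopy $h$ on all of $C_{\beth}$, and only at the very end passes to $\bbS_{n_0}\times\bbS_{r_0}$-coinvariants using characteristic zero. Because the homotopy is global, the paper must make one canonical choice valid for the whole complex; this is the somewhat elaborate construction of the ``special marked vertex'' $c_0$ of the smallest island of $\beth$ (chosen via the edge order, comparing the islands attached at each marked vertex of $\Ups_0$). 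The homotopy then contracts the unique separating edge at $\mmp(c_0)$ exactly when $\mmp^{-1}(\mmp(c_0))=\{c_0\}$ and $\mmp(c_0)$ is univalent in the separating forest --- which is precisely your local condition ``$\beta(1)$ is a leaf carrying no block other than $E_1$,'' applied at one distinguished cut vertex instead of at each one. What your route buys is that the distinguished block can be chosen arbitrarily and independently in each local factor (any choice works, since coinvariants are taken only at the end), the local complexes are small and explicitly computable, and the verification of $h\,\pa_{\Gr}+\pa_{\Gr}\,h=\id$ becomes a purely local case analysis. What it costs is the isomorphism $\QCo_{\beth_z}\cong\bigl(\bs^{\bullet}\bigotimes_{v^*}\cC_{v^*}\bigr)_{\Aut(\beth_z)}$: the sign bookkeeping for the edge labels distributed among the tree factors, the $\Aut(\beth_z)$-equivariance, and the Koszul signs making $\pa_{\Gr}$ a tensor-product differential all need to be checked, whereas the paper's labeled model $C_{\beth}$ absorbs this bookkeeping into a single coinvariant quotient. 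Two small points to make explicit when writing it up: each block has at least two legs (a single-leg block would make that leg a separating edge of $\beth$, contradicting Condition \ref{cond:beth}), which is what guarantees that a leaf of $T$ carrying one block has valency $\ge 3$; and $\pa_{\Gr}$ never splits a non-cut vertex into a configuration with a new separating edge, which is what confines the differential to the tensor factors.
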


This proposition is proved in Section \ref{sec:QC-beth-acyclic} below. 
Let us now use it to prove that the quotient complex  $\QCo$ is acyclic. 

Indeed, it is easy to see that 
$$
\QCo ~ = ~ \bigcup_m  \cF_m \QCo.
$$
Moreover, $\cF_m (\QCo)^d = \bfzero$ for every $m < -d$. 

Thus, applying \cite[Claim A.2]{notes} and Proposition \ref{prop:QC-beth-acyclic}, we deduce that
$\QCo$ is acyclic. 

This concludes a proof of the second part of Proposition \ref{prop:GCo-dfGCo3} modulo 
Proposition \ref{prop:QC-beth-acyclic}.

\subsection{Assembling a connected graph from a ``separating'' forest and islands}
\label{sec:disassem-G}

Let us consider a connected graph $\G$ with at least one cut vertex (and 
with all vertices having valency $\ge 3$). It is clear that all separating edges and 
cut vertices of $\G$ form a forest $F$ and we call $F$ {\it the separating forest} of $\G$.   
We will now explain how any connected 
graph $\G$ with at least one cut vertex can be reconstructed from its 
separating forest and 1-vertex irreducible graphs which we call {\it islands.} 
For this purpose, we need a more precise terminology. 

An {\it island} is a $1$-vertex irreducible graph $\Ups$ with a non-zero number of 
marked vertices.  Each marked vertex of $\Ups$ must have valency $\ge 2$
and all the remaining vertices of $\Ups$ must have valency $\ge 3$.  

Let $S= \{ c_1, c_2, \dots, c_m\}$ be an auxiliary set with $\ge 2$ elements. 
An {\it $S$-decorated forest} $F$  is a forest with a possibly empty set of {\it internal} 
vertices of valencies $\ge 3$, 
a non-empty set of {\it external} vertices $V_{ext}(F)$ and
a \und{surjective} map 
\begin{equation}
\label{mmp}
\mmp : S  \to V_{ext}(F)
\end{equation}    
satisfying these properties: 
\begin{itemize}

\item {\it each connected component of $F$ has at least one external vertex;}

\item {\it if a connected component of $F$ has exactly one external vertex $v$
then $\mmp^{-1}(v)$ has at least $2$ elements.}

\end{itemize}
Note that, since each internal vertex has valency $\ge 3$, all univalent and bivalent vertices 
of any $S$-decorated forest $F$ are external. So if a connected component of $F$ has 
exactly one external vertex $v$ then this connected component consists of only one vertex 
and it does not have any edges.

Figure \ref{fig:dec-forest} shows an example of a $\{c_1, \dots, c_{7}\}$-decorated forest $F$ 
and figure \ref{fig:islands} shows a collection of islands with  $\{c_1, \dots, c_{7}\}$ being 
the set of its marked vertices. In pictures, internal vertices of forests are depicted by gray bullets, 
edges of forests are depicted by dashed lines, and external vertices are depicted by white vertices with 
inscribed pre-images of $\mmp$.
Using this forest $F$ and the collection of islands from  figure \ref{fig:islands}, we assemble 
the graph shown in figure \ref{fig:assem} via merging every external vertex  $v \in V_{ext}(F)$ with 
the vertices of the islands marked by elements in $\mmp^{-1}(v)$. For example, the bivalent vertex of 
$F$ is merged with the marked vertex $c_1$ of the tetrahedron and the marked vertex $c_4$ of the 
``square''. 
\begin{figure}[htp] 
\centering 
\begin{minipage}[t]{0.4\linewidth}
\centering 
\begin{tikzpicture}[scale=0.5, >=stealth']
\tikzstyle{ext}=[draw, circle,  minimum size=3, inner sep=2]
\tikzstyle{b}=[circle, draw, fill, minimum size=4, inner sep=1]
\tikzstyle{gr}=[circle, draw, fill, color =gray, minimum size=4, inner sep=1]
\node [ext] (c7) at (0,0) {$c_7$};
\node [ext] (c1c4) at (3,0) {$c_1, c_4$};
\node [gr] (b1) at (5,0) {};
\node [ext] (c5) at (6.5,1) {$c_5$};
\node [ext] (c2) at (6.5,-1) {$c_2$};
\node [ext] (c3c6) at (3,-3) {$c_3, c_6$};
\draw [dashed] (c7) edge (c1c4) (c1c4) edge (b1)
(b1) edge (c5) edge (c2);
\end{tikzpicture}
\caption{A $\{c_1, \dots, c_7\}$-decorated forest $F$. 
The only internal vertex is shown as the gray bullet} \label{fig:dec-forest}
\end{minipage}
\hspace{0.2cm}
\begin{minipage}[t]{0.4\linewidth}
\centering 
\begin{tikzpicture}[scale=0.5, >=stealth']
\tikzstyle{ext}=[draw, circle,  minimum size=3, inner sep=2]
\tikzstyle{b}=[circle, draw, fill, minimum size=4, inner sep=1]
\begin{scope}[shift={(-1,-4)}]
\node [ext] (c7) at (-3.5,2) {\small $c_7$};
\draw (c7) ..controls (-5,1) and (-5,3) .. (c7);
\node [ext] (c3) at (-3.5,-1) {\small $c_3$};
\draw (c3) ..controls (-5,-2) and (-5,0) .. (c3);
\end{scope}
\node [b] (b1) at (0,0) {};
\node [b] (b2) at (-1,1) {};
\node [b] (b3) at (-2,0) {};
\node [ext] (c1) at (-1,-1) {\small $c_1$};
\draw  (b1) edge (b2) edge (b3) edge (c1) (b2) edge (b3) edge (c1) (b3) edge (c1);
\begin{scope}[shift={(4,0)}]
\node [b] (b1) at (0,0) {};
\node [b] (b2) at (-1,1) {};
\node [b] (b3) at (-2,0) {};
\node [ext] (c5) at (-1,-1) {\small $c_5$};
\draw  (b1) edge (b2) edge (b3) edge (c5) (b2) edge (b3) edge (c5) (b3) edge (c5);
\end{scope}
\begin{scope}[shift={(4,-5)}]
\node [b] (p1) at (0,0) {};
\node [b] (p2) at (1.5,0) {};
\node [b] (p3) at (0.46,1.43) {};
\node [ext] (p4) at (-1.21,0.88) {\small $c_2$};
\node [b] (p5) at (-1.21,-0.88) {};
\node [b] (p6) at (0.46,-1.43) {};
\draw (p1) edge (p2) edge (p3) edge (p4) edge (p5) edge (p6)
(p2) edge (p3) (p3) edge (p4) (p4) edge (p5) (p5) edge (p6) (p6) edge (p2); 
\end{scope}
\begin{scope}[shift={(-1,-5)}]
\node [ext] (c4) at (0,1.5) {\small $c_4$};
\node [b] (s1) at (1.5,0) {};
\node [b] (s2) at (0,-1.5) {};
\node [b] (s0) at (0,0) {};
\node [ext] (c6) at (-1.5,0) {\small $c_6$};
\draw (s0) edge (s1) edge (s2) edge (c4) edge (c6) (c4) edge (s1) edge (c6) (s2) edge (s1) edge (c6);
\end{scope}
\end{tikzpicture}
\caption{A collection of islands} \label{fig:islands}
\end{minipage}
\end{figure}

%
%
\begin{figure}[htp] 
\centering 
\begin{tikzpicture}[scale=0.5, >=stealth']
\tikzstyle{ext}=[draw, circle,  minimum size=3, inner sep=2]
\tikzstyle{b}=[circle, draw, fill, minimum size=4, inner sep=1]
\tikzstyle{gr}=[circle, draw, fill, color =gray, minimum size=4, inner sep=1]
\node [gr] (c7) at (0,0) {};
\node [gr] (c1c4) at (2,0) {};
\node [gr] (gr1) at (5,0) {};
\node [gr] (c5) at (6.5,1) {};
\node [gr] (c2) at (6.5,-1) {};
\draw [dashed] (c7) edge (c1c4) (c1c4) edge (gr1)
(gr1) edge (c5) edge (c2);
\draw (c7) ..controls (-1,-1) and (-1,1) .. (c7);
\node [b] (b1) at (2,2) {};
\node [b] (b2) at (3,1) {};
\node [b] (b3) at (1,1) {};
\draw (b1) edge (b2) edge (b3) edge (c1c4) (b2) edge (b3) edge (c1c4)  (b3) edge (c1c4);
\node [b] (b4) at (2,-3) {};
\node [b] (b5) at (3.5,-1.5) {};
\node [gr] (c3c6) at (0.5,-1.5) {};
\node [b] (b0) at (2,-1.5) {};
\draw (b0) edge (b4) edge (b5) edge (c3c6) edge (c1c4) (c1c4) edge (b5) edge (c3c6) (b4) edge (b5) edge (c3c6);
\draw (c3c6) ..controls (-0.5,-2.5) and (-0.5,-0.5) .. (c3c6);
\node [b] (b7) at (8,1) {};
\node [b] (b8) at (8,2.5) {};
\node [b] (b9) at (6.5,2.5) {};
\draw (c5) edge (b7) edge (b8) edge (b9) (b7) edge (b8) edge (b9) (b8) edge (b9);
\begin{scope}[shift={(7.71,-1.88)}]
\node [b] (p1) at (0,0) {};
\node [b] (p2) at (1.5,0) {};
\node [b] (p3) at (0.46,1.43) {};
\node [b] (p5) at (-1.21,-0.88) {};
\node [b] (p6) at (0.46,-1.43) {};
\draw (p1) edge (p2) edge (p3) edge (c2) edge (p5) edge (p6)
(p2) edge (p3) (p3) edge (c2) (c2) edge (p5) (p5) edge (p6) (p6) edge (p2); 
\end{scope}
\end{tikzpicture}
\caption{The graph assembled from the forest in figure \ref{fig:dec-forest}
and the islands in figure \ref{fig:islands}. The cut vertices are shown as 
gray bullets and separating edges are shown as dashed lines} \label{fig:assem}
\end{figure}
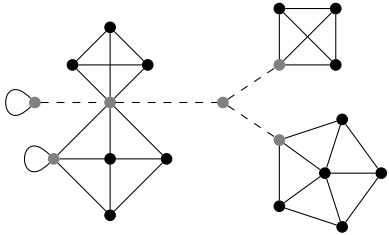

It is clear that every connected graph $\G$ with at least one cut vertex (and all vertices having valency $\ge 3$)
can be disassembled into a collection of islands (with at least 2 elements) and 
its separating forest decorated by the set of marked vertices of the islands. 

An example of this process is shown in figures \ref{fig:G}, \ref{fig:forest} and 
\ref{fig:G-disassem}. More precisely, we start with a graph $\G$ shown in 
figure \ref{fig:G}. The cut vertices of $\G$ are shown as 
gray bullets and separating edges of $\G$ are shown as dashed lines.

By detaching the separating forest from $\G$ we get a 
disconnected graph with some edges having ``free'' ends shown in 
figure \ref{fig:forest}.  To get the islands, we attach the ``free'' ends to appropriate 
marked vertices $c_1, c_2, \dots, c_7$ and decorate the separating forest by 
the set $\{c_1, c_2, \dots, c_7\}$. The resulting collection of islands 
and the  $\{c_1, c_2, \dots, c_7\}$-decorated forest are shown in figure \ref{fig:G-disassem}. 
\begin{figure}[htp]
\begin{minipage}[t]{0.45\linewidth}
\centering 
\begin{tikzpicture}[scale=0.5, >=stealth']
\tikzstyle{w}=[circle, draw, minimum size=4, inner sep=1]
\tikzstyle{b}=[circle, draw, fill, minimum size=4, inner sep=1]
\tikzstyle{gr}=[circle, draw, fill, color =gray, minimum size=4, inner sep=1]
\node [gr] (b1) at (0,0) {};
\node [b] (b2) at (-1,1) {};
\node [b] (b3) at (-2,0) {};
\node [b] (b4) at (-1,-1) {};
\draw  (b1) edge (b2) edge (b3) edge (b4) (b2) edge (b3) edge (b4) (b3) edge (b4);
\node [b] (b5) at (1,1) {};
\node [gr] (b6) at (2,0) {};
\node [b] (b7) at (1,-1) {};
\draw  (b1) edge (b5) edge (b7) (b6) edge (b5) edge (b7) (b5) edge (b7); 
\node [gr] (b8) at (3,0) {};
\node [gr] (b9) at (3,1) {};
\draw [dashed] (b8) edge (b6) edge (b9); 
\node [gr] (b11) at (3,2.5) {};
\node [b] (b12) at (4.5,2.5) {};
\node [b] (b13) at (4.5,1) {};
\node [gr] (b14) at (1.5,2.5) {};
\draw  (b9) edge (b11) edge (b12) edge (b13) (b11) edge (b12) edge (b13)  
(b12) edge (b13);
\draw [dashed] (b11) edge (b14);
\draw (b14) ..controls (0.5,1.5) and (0.5,3.5) .. (b14);
\begin{scope}[shift={(5,-2)}]
\node [b] (p1) at (0,0) {};
\node [b] (p2) at (1.5,0) {};
\node [b] (p3) at (0.46,1.43) {};
\node [gr] (p4) at (-1.21,0.88) {};
\node [b] (p5) at (-1.21,-0.88) {};
\node [b] (p6) at (0.46,-1.43) {};
\draw (p1) edge (p2) edge (p3) edge (p4) edge (p5) edge (p6)
(p2) edge (p3) (p3) edge (p4) (p4) edge (p5) (p5) edge (p6) (p6) edge (p2); 
\end{scope}
\draw [dashed] (b8) edge (p4);
\end{tikzpicture}
~\\[0.3cm]
\caption{A graph $\G$.  Cut vertices are shown as gray bullets and separating edges 
are shown as dashed lines} \label{fig:G}
\end{minipage}
\begin{minipage}[t]{0.45\linewidth}
\centering 
\begin{tikzpicture}[scale=0.5, >=stealth']
\tikzstyle{w}=[circle, draw, minimum size=4, inner sep=1]
\tikzstyle{b}=[circle, draw, fill, minimum size=4, inner sep=1]
\tikzstyle{empty}=[circle, minimum size=4, inner sep=3]
\tikzstyle{gr}=[circle, draw, fill, color =gray, minimum size=4, inner sep=1]
\begin{scope}[shift={(7,0)}]
\node [gr] (bb1) at (1,0) {};
\node [gr] (bb6) at (2,0) {};
\node [gr] (bb8) at (3,0) {};
\node [gr] (bb9) at (3,1) {};
\node [gr] (bb10) at (4,-1) {};
\draw [dashed] (bb8) edge (bb6) edge (bb9) edge (bb10); 
\node [gr] (bb11) at (3,2) {};
\node [gr] (bb14) at (1.5,2) {};
\draw [dashed] (bb11) edge (bb14); 
\end{scope}
\node [empty] (b1) at (0,0) {};
\node [b] (b2) at (-1,1) {};
\node [b] (b3) at (-2,0) {};
\node [b] (b4) at (-1,-1) {};
\draw  (b1) edge (b2) edge (b3) edge (b4) (b2) edge (b3) edge (b4) (b3) edge (b4);
\node [b] (b5) at (1,1) {};
\node [empty] (b6) at (2,0) {};
\node [b] (b7) at (1,-1) {};
\draw  (b1) edge (b5) edge (b7) (b6) edge (b5) edge (b7) (b5) edge (b7); 
\node [empty] (b9) at (3,1) {};
\node [empty] (b11) at (3,2.5) {};
\node [b] (b12) at (4.5,2.5) {};
\node [b] (b13) at (4.5,1) {};
\node [empty] (b14) at (1.5,2.5) {};
\draw  (b9) edge (b11) edge (b12) edge (b13) (b11) edge (b12) edge (b13)  
(b12) edge (b13);
\draw (b14) ..controls (0.5,1.5) and (0.5,3.5) .. (b14);
\begin{scope}[shift={(5,-2)}]
\node [b] (p1) at (0,0) {};
\node [b] (p2) at (1.5,0) {};
\node [b] (p3) at (0.46,1.43) {};
\node [empty] (p4) at (-1.21,0.88) {}; 
\node [b] (p5) at (-1.21,-0.88) {};
\node [b] (p6) at (0.46,-1.43) {};
\draw (p1) edge (p2) edge (p3) edge (p4) edge (p5) edge (p6)
(p2) edge (p3) (p3) edge (p4) (p4) edge (p5) (p5) edge (p6) (p6) edge (p2); 
\end{scope}
\end{tikzpicture}
~\\[0.3cm]
\caption{The forest is detached from $\G$} \label{fig:forest}
\end{minipage}
\begin{minipage}[t]{\linewidth}
\centering 
\begin{tikzpicture}[scale=0.5, >=stealth']
\tikzstyle{b}=[circle, draw, fill, minimum size=4, inner sep=1]
\tikzstyle{ext}=[draw, circle,  minimum size=3, inner sep=2]
\tikzstyle{empty}=[circle, minimum size=4, inner sep=3]
\tikzstyle{gr}=[circle, draw, fill, color =gray, minimum size=4, inner sep=1]
\begin{scope}[shift={(10,0)}]
\node [ext] (c1c2) at (0,0) {\small $c_1, c_2$};
\node [ext] (c3) at (2.5,0) {\small $c_3$};
\node [gr] (gr) at (4,0) {};
\node [ext] (c4) at (4,1.5) {\small $c_4$};
\node [ext] (c7) at (5,-1.5) {\small $c_7$};
\draw [dashed] (gr) edge (c3) edge (c4) edge (c7); 
\node [ext] (c6) at (0,2.5) {\small $c_6$};
\node [ext] (c5) at (2,2.5) {\small $c_5$};
\draw [dashed] (c6) edge (c5); 
\end{scope}
\node [ext] (b1) at (-2,0) {\small $c_1$};
\node [b] (b2) at (-3,1) {};
\node [b] (b3) at (-4,0) {};
\node [b] (b4) at (-3,-1) {};
\draw  (b1) edge (b2) edge (b3) edge (b4) (b2) edge (b3) edge (b4) (b3) edge (b4);
\node [ext] (bb1) at (0,0) {\small $c_2$};
\node [b] (b5) at (1,1) {};
\node [ext] (b6) at (2,0) {\small $c_3$};
\node [b] (b7) at (1,-1) {};
\draw  (bb1) edge (b5) edge (b7) (b6) edge (b5) edge (b7) (b5) edge (b7); 
\node [ext] (b9) at (4,1) {\small $c_4$};

\node [ext] (b11) at (4,2.5) {\small $c_5$};
\node [b] (b12) at (5.5,2.5) {};
\node [b] (b13) at (5.5,1) {};
\node [ext] (b14) at (1.5,2.5) {\small $c_6$};
\draw  (b9) edge (b11) edge (b12) edge (b13) (b11) edge (b12) edge (b13)  
(b12) edge (b13);
\draw (b14) ..controls (-0.5,1.5) and (-0.5,3.5) .. (b14);
\begin{scope}[shift={(5,-2)}]
\node [b] (p1) at (0,0) {};
\node [b] (p2) at (1.5,0) {};
\node [b] (p3) at (0.46,1.43) {};
\node [ext] (p4) at (-1.21,0.88) {\small $c_7$};
\node [b] (p5) at (-1.21,-0.88) {};
\node [b] (p6) at (0.46,-1.43) {};
\draw (p1) edge (p2) edge (p3) edge (p4) edge (p5) edge (p6)
(p2) edge (p3) (p3) edge (p4) (p4) edge (p5) (p5) edge (p6) (p6) edge (p2); 
\end{scope}
\end{tikzpicture}
~\\[0.3cm]
\caption{$\G$ is disassembled into its separating forest and the islands} \label{fig:G-disassem}
\end{minipage}
\end{figure} 

\subsection{The proof of Proposition \ref{prop:QC-beth-acyclic}}
\label{sec:QC-beth-acyclic}

Let $\beth$ be a graph satisfying Condition \ref{cond:beth} with $n_0$ non-cut vertices, $n_1$ cut vertices
and $r_0$ edges.  

Let us fix integers $r \ge 0$, $n \ge n_1$ and denote by 
$\gra(\beth)_{n_0 + n}^{r_0 + r}$ the set of connected graphs $\G \in \gra_{n_0 + n}^{r_0 + r}$ which 
satisfy the following conditions:
 
\begin{itemize}

\item every vertex of $\G$ has valency $\ge 3$, 

\item $\G$ has at least one cut vertex and $\G^{\red}$ is isomorphic to $\beth$, 

\item $\G$ has exactly $r$ separating edges and they are labeled by 
$\{\und{r_0+ 1}, \und{r_0 + 2}, \dots, \und{r_0 + r}\}$, 

\item $\G$ has exactly $n$ cut vertices and they are labeled by $\{\gray{n_0 +1, n_0 + 2, \dots, n_0 +n}\}$.

\end{itemize}
The last two conditions imply that $n_0$ non-cut vertices (resp. $r_0$ non-separating edges)
of $\G$ are labeled by  $\{\gray{1, 2, \dots, n_0}\}$ (resp. by $\{\und{1}, \und{2}, \dots, \und{r_0}\}$).

For example, the graph $\G$ shown in figure \ref{fig:G-beth} belongs to $\gra(\beth)^{22+1}_{11+2}$, 
where $\beth$ is depicted in figure \ref{fig:beth}.
\begin{figure}[htp]
\centering 
\begin{tikzpicture}[scale=1, >=stealth']
\tikzstyle{w}=[circle, draw, minimum size=3, inner sep=1]
\tikzstyle{b}=[circle, draw, fill, minimum size=3, inner sep=1]
\tikzstyle{gr}=[circle, draw, fill, color =gray, minimum size=4, inner sep=1]

\node [b] (b0) at (0,0) {};
\draw (0.3,-0.3) node[anchor=center, color =gray] {{\small $1$}};
\node [gr] (b1) at (4,0) {};
\draw (4,-0.4) node[anchor=center, color =gray] {{\small $12$}};
\node [b] (b2) at (0.62, 1.90) {};
\draw (0.62,2.15) node[anchor=center, color =gray] {{\small $2$}};
\node [b] (b3) at (-1.62, 1.18) {};
\draw (-1.82,1.18) node[anchor=center, color =gray] {{\small $3$}};
\node [b] (b4) at (-1.62, -1.18) {};
\draw (-1.82,-1.28) node[anchor=center, color =gray] {{\small $4$}};
\node [b] (b5) at (0.62, -1.90) {};
\draw (0.72,-2.1) node[anchor=center, color =gray] {{\small $5$}};
\draw (b1) edge (b2) (b2) edge (b3) 
(b3) edge (b4) (b4) edge (b5) (b5) edge (b1);
\draw (b0) edge (b1)  edge (b2) edge (b3) edge (b4)  edge (b5);
\node [b] (t1) at (7,1) {};
\draw (7,0.75) node[anchor=center, color =gray] {{\small $6$}};
\node [b] (t2) at (4,3) {};
\draw (3.8,3) node[anchor=center, color =gray] {{\small $8$}};
\node [b] (t3) at (7,3.5) {};
\draw (7,3.75) node[anchor=center, color =gray] {{\small $7$}};
\draw (b1) edge (t1) edge (t2) edge (t3) (t1) edge (t2) edge (t3)  (t2) edge (t3);
\node [gr] (tt0) at (5,-1) {};
\draw (5.1,-0.7) node[anchor=center, color =gray] {{\small $13$}};
\draw [dashed] (b1) edge (tt0);
\begin{scope}[shift={(1,-1)}]
\node [b] (tt1) at (7,-1) {};
\draw (7.3,-1) node[anchor=center, color =gray] {{\small $11$}};

\node [b] (tt2) at (4,-3) {};
\draw (4,-3.25) node[anchor=center, color =gray] {{\small $9$}};

\node [b] (tt3) at (7,-3.5) {};
\draw (7.1,-3.7) node[anchor=center, color =gray] {{\small $10$}};

\draw (tt0) edge (tt1) edge (tt2) edge (tt3) (tt1) edge (tt2) edge (tt3)  (tt2) edge (tt3);
\end{scope}
\end{tikzpicture}
\caption{An example of a graph $\G \in \gra(\beth)^{22+1}_{11+2}$. 
The (only separating) edge connecting $\gray{12}$ to  $\gray{13}$ is labeled by $\und{23}$. 
The labels $\und{1}, \und{2}, \dots, \und{22}$ for non-separating edges are not shown} 
\label{fig:G-beth}
\end{figure}

The group $\bbS_n \times \bbS_r$ acts on $\gra(\beth)_{n_0 + n}^{r_0 + r}$ in the obvious way by rearranging the labels 
$$
\gray{n_0 +1}, \gray{n_0 +2},\dots, \gray{n_0 + n} \quad 
\textrm{ and } \quad \und{r_0 +1},  \und{r_0 +2}, \dots, \und{r_0 + r}\,.
$$
Using this action, we define the graded vector space 
\begin{equation}
\label{C-beth}
C_{\beth} ~ : =~ \bigoplus_{n \ge n_1, ~r \ge 0} ~  \big(\, \bs^{2 (n_0  + n) - 2 - (r_0 +r) } \,  
\span_{\bbK}\big(  \gra(\beth)_{n_0 + n}^{r_0 + r}  \big) \otimes  \sgn_{r}  \,\big)_{\bbS_n \times \bbS_r } \,.
\end{equation}

Let us denote by $\de$ the linear map 
$$
\de : \span_{\bbK}\big(  \gra(\beth)_{n_0 + n}^{r_0 + r} \big) \to  \span_{\bbK}\big(  \gra(\beth)_{n_0 + n+1}^{r_0 + r+1} \big)
$$
defined by the formula
\begin{equation}
\label{delta-dfn}
\de(\G) =  -(-1)^{|\G|} \, \sum_{i = n_0+1}^{n_0+n} \de_i(\G),
\end{equation}
where $\G \in  \gra(\beth)_{n_0 + n}^{r_0 + r}$ and $\de_i(\G)$ is obtained from $\G \circ_i \G_{\ed}$ by 
retaining only graphs with vertices of valencies $\ge 3$ for which the additional edge is separating.  

It is easy to see that $\de$ descends to a linear map of degree $1$ on $C_{\beth}$. 
For example, if $\G$ is the graph shown in figure \ref{fig:G-beth} then 
$$
\de(\G) = \de_{12}(\G)
$$
because $\G \circ_{13} \G_{\ed}$ involves only ``unwanted'' graphs. 
Moreover, $\de_{12}(\G)$ is the sum of the graphs depicted in 
figures \ref{fig:deG-beth} and \ref{fig:deG-beth1}.
\begin{figure}[htp]
\centering 
\begin{tikzpicture}[scale=1, >=stealth']
\tikzstyle{w}=[circle, draw, minimum size=3, inner sep=1]
\tikzstyle{b}=[circle, draw, fill, minimum size=3, inner sep=1]
\tikzstyle{gr}=[circle, draw, fill, color =gray, minimum size=4, inner sep=1]
\node [b] (b0) at (0,0) {};
\draw (0.3,-0.3) node[anchor=center, color =gray] {{\small $1$}};
\node [gr] (b1) at (4,0) {};
\draw (4,-0.4) node[anchor=center, color =gray] {{\small $12$}};
\node [b] (b2) at (0.62, 1.90) {};
\draw (0.62,2.15) node[anchor=center, color =gray] {{\small $2$}};
\node [b] (b3) at (-1.62, 1.18) {};
\draw (-1.82,1.18) node[anchor=center, color =gray] {{\small $3$}};
\node [b] (b4) at (-1.62, -1.18) {};
\draw (-1.82,-1.28) node[anchor=center, color =gray] {{\small $4$}};
\node [b] (b5) at (0.62, -1.90) {};
\draw (0.72,-2.1) node[anchor=center, color =gray] {{\small $5$}};
\draw (b1) edge (b2) (b2) edge (b3) 
(b3) edge (b4) (b4) edge (b5) (b5) edge (b1);
\draw (b0) edge (b1)  edge (b2) edge (b3) edge (b4)  edge (b5);
\begin{scope}[shift={(1,1)}]
\node [gr] (t0) at (4,0) {};
\draw (4.2,-0.2) node[anchor=center, color =gray] {{\small $13$}};
\node [b] (t1) at (7,1) {};
\draw (7,0.75) node[anchor=center, color =gray] {{\small $6$}};
\node [b] (t2) at (4,3) {};
\draw (3.8,3) node[anchor=center, color =gray] {{\small $8$}};
\node [b] (t3) at (7,3.5) {};
\draw (7,3.75) node[anchor=center, color =gray] {{\small $7$}};
\draw (t0) edge (t1) edge (t2) edge (t3) (t1) edge (t2) edge (t3)  (t2) edge (t3);
\end{scope}
\draw [dashed] (b1) edge (t0);
\node [gr] (tt0) at (5,-1) {};
\draw (5.1,-0.7) node[anchor=center, color =gray] {{\small $14$}};
\draw [dashed] (b1) edge (tt0);
\begin{scope}[shift={(1,-1)}]
\node [b] (tt1) at (7,-1) {};
\draw (7.3,-1) node[anchor=center, color =gray] {{\small $11$}};
\node [b] (tt2) at (4,-3) {};
\draw (4,-3.25) node[anchor=center, color =gray] {{\small $9$}};
\node [b] (tt3) at (7,-3.5) {};
\draw (7.1,-3.7) node[anchor=center, color =gray] {{\small $10$}};
\draw (tt0) edge (tt1) edge (tt2) edge (tt3) (tt1) edge (tt2) edge (tt3)  (tt2) edge (tt3);
\end{scope}
\end{tikzpicture}
\caption{The edge $(\gray{12}, \gray{14})$ is labeled by $\und{23}$
and the edge  $(\gray{12}, \gray{13})$ is labeled by $\und{24}$ 
The labels $\und{1}, \und{2}, \dots, \und{22}$ for non-separating edges are not shown} 
\label{fig:deG-beth}
\end{figure}
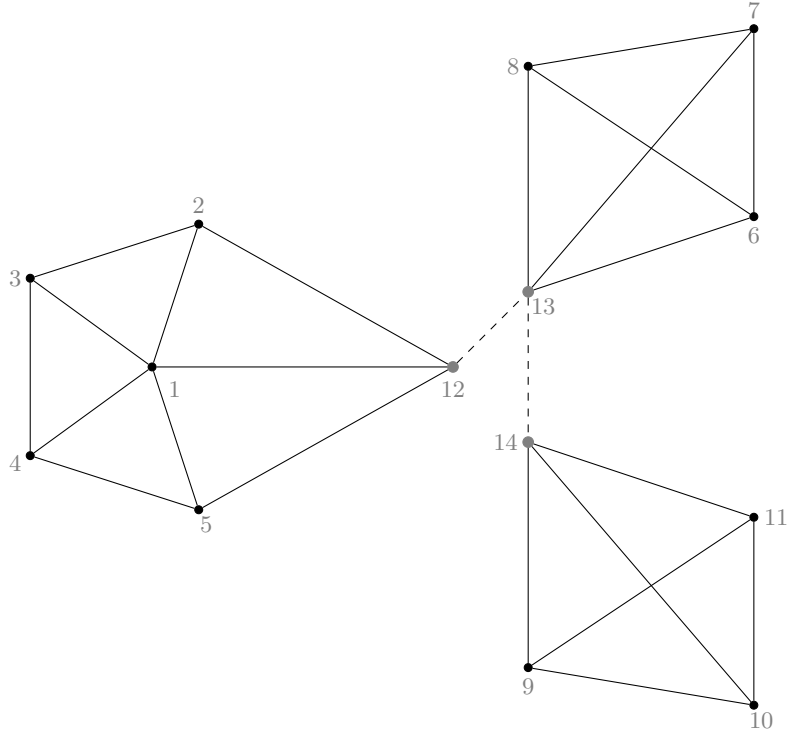
%
%
\begin{figure}[htp]
\centering 
\begin{tikzpicture}[scale=1, >=stealth']
\tikzstyle{w}=[circle, draw, minimum size=3, inner sep=1]
\tikzstyle{b}=[circle, draw, fill, minimum size=3, inner sep=1]
\tikzstyle{gr}=[circle, draw, fill, color =gray, minimum size=4, inner sep=1]
\node [b] (b0) at (0,0) {};
\draw (0.3,-0.3) node[anchor=center, color =gray] {{\small $1$}};
\node [gr] (b1) at (4,0) {};
\draw (4,-0.3) node[anchor=center, color =gray] {{\small $12$}};
\node [b] (b2) at (0.62, 1.90) {};
\draw (0.62,2.15) node[anchor=center, color =gray] {{\small $2$}};
\node [b] (b3) at (-1.62, 1.18) {};
\draw (-1.82,1.18) node[anchor=center, color =gray] {{\small $3$}};
\node [b] (b4) at (-1.62, -1.18) {};
\draw (-1.82,-1.28) node[anchor=center, color =gray] {{\small $4$}};
\node [b] (b5) at (0.62, -1.90) {};
\draw (0.72,-2.1) node[anchor=center, color =gray] {{\small $5$}};
\draw (b1) edge (b2) (b2) edge (b3) 
(b3) edge (b4) (b4) edge (b5) (b5) edge (b1);
\draw (b0) edge (b1)  edge (b2) edge (b3) edge (b4)  edge (b5);
\begin{scope}[shift={(1,1)}]
\node [gr] (t0) at (4,0) {};
\draw (4.2,-0.2) node[anchor=center, color =gray] {{\small $13$}};
\node [b] (t1) at (7,1) {};
\draw (7,0.75) node[anchor=center, color =gray] {{\small $6$}};
\node [b] (t2) at (4,3) {};
\draw (3.8,3) node[anchor=center, color =gray] {{\small $8$}};
\node [b] (t3) at (7,3.5) {};
\draw (7,3.75) node[anchor=center, color =gray] {{\small $7$}};
\draw (t0) edge (t1) edge (t2) edge (t3) (t1) edge (t2) edge (t3)  (t2) edge (t3);
\end{scope}
\draw [dashed] (b1) edge (t0);
\node [gr] (tt0) at (5,-1) {};
\draw (4.7,-1) node[anchor=center, color =gray] {{\small $14$}};
\draw [dashed] (t0) edge (tt0);
\begin{scope}[shift={(1,-1)}]
\node [b] (tt1) at (7,-1) {};
\draw (7.3,-1) node[anchor=center, color =gray] {{\small $11$}};
\node [b] (tt2) at (4,-3) {};
\draw (4,-3.25) node[anchor=center, color =gray] {{\small $9$}};
\node [b] (tt3) at (7,-3.5) {};
\draw (7.1,-3.7) node[anchor=center, color =gray] {{\small $10$}};
\draw (tt0) edge (tt1) edge (tt2) edge (tt3) (tt1) edge (tt2) edge (tt3)  (tt2) edge (tt3);
\end{scope}
\end{tikzpicture}
\caption{The edge $(\gray{13}, \gray{14})$ is labeled by $\und{23}$
and the edge  $(\gray{12}, \gray{13})$ is labeled by $\und{24}$. 
The labels $\und{1}, \und{2}, \dots, \und{22}$ for non-separating edges are not shown} 
\label{fig:deG-beth1}
\end{figure}
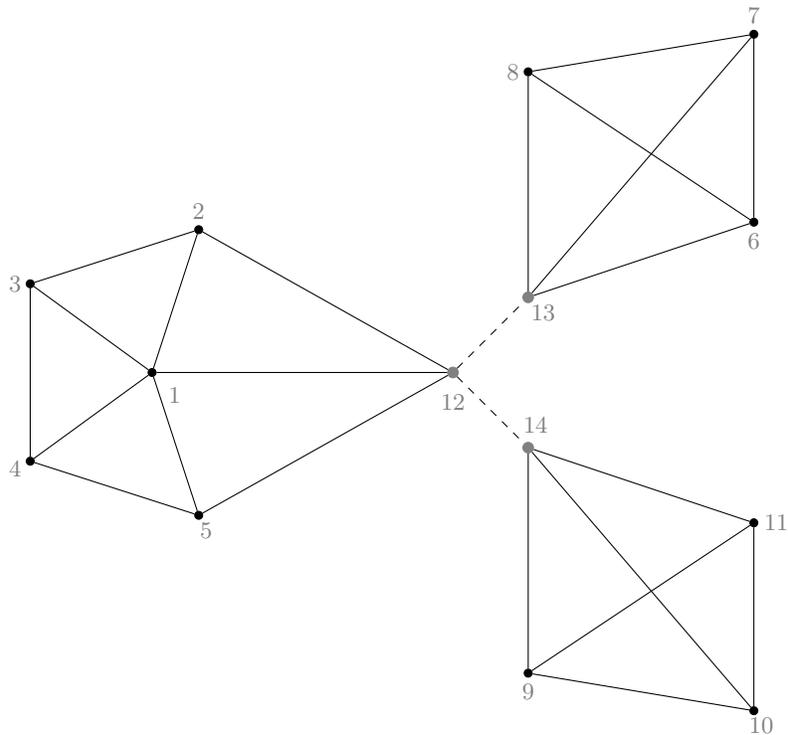

Let us prove that
\begin{claim}
\label{cl:delta2is0}
The map $\de : C_{\beth} \to C_{\beth}$ satisfies the identity
$$
\de^2 = 0. 
$$
\end{claim}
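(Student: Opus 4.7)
My strategy is to identify $\delta$ with the associated graded of the induced differential $\pa$ on $\QCo$ with respect to the filtration \eqref{eq:filt_sepedge} by the number of separating edges, restricted to the summand $\QCo_{\beth} \cong C_{\beth}$. Once this identification is in place, the identity $\delta^2 = 0$ becomes an automatic consequence of $\pa^2 = 0$, since the associated graded differential of any filtered cochain complex squares to zero.

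The first step is to verify that $\pa$ preserves the filtration $\cF_\bullet \QCo$. Given $\G \in \QCo$ and a summand $\G'$ of $\G \circ_i \G_{\ed}$, the vertex $i$ is split into two vertices $i, i'$ joined by a new edge $e$, with the edges formerly at $i$ distributed between the two endpoints. A routine graph-theoretic check establishes that (a) every edge of $\G$ that was separating remains separating in $\G'$; (b) every edge of $\G$ that was non-separating remains non-separating in $\G'$; and (c) the new edge $e$ is separating in $\G'$ precisely when $i$ is a cut vertex of $\G$ and the chosen distribution splits the branches of $\G$ at $i$ non-trivially into two non-empty groups. These three observations imply that the number of separating edges increases by at most one under any single insertion, so $\pa(\cF_m \QCo) \subset \cF_m \QCo$.

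The second step identifies $\pa_{\Gr}|_{\QCo_\beth}$ with $\delta$. By (c), only insertions at cut vertices $i \in \{n_0+1, \dots, n_0+n\}$ can contribute to $\pa_{\Gr}$, so the full sum in \eqref{pa-GC} collapses to a sum over cut vertices of $\G$. Moreover, when the new edge $e$ is separating, contracting $e$ undoes the insertion and returns $\G$, hence $(\G')^{\red} = \G^{\red} = \beth$; so $\pa_{\Gr}$ preserves the decomposition of $\Gr_\cF \QCo$ into summands $\QCo_\beth$. Since both \eqref{pa-GC} and \eqref{delta-dfn} share the same prefactor $-(-1)^{|\G|}$, and the surviving terms in $\pa_{\Gr}$ are by definition exactly those in which the new edge is separating while all valencies remain $\ge 3$, we obtain $\pa_{\Gr}|_{C_\beth} = \delta$. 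Therefore $\delta^2 = \pa_{\Gr}^2 = 0$.

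The main obstacle will be the graph-theoretic verification in the first step, especially assertion (c) that the only way to produce a new separating edge via an insertion is a non-trivial branch-partition at a cut vertex. Some care is needed in boundary cases — when $i$ carries a loop, when $i$ is incident to multiple edges toward the same neighbor, or when some branches at the cut vertex consist of a single edge — but in each scenario the standard criterion that an edge is separating iff its removal disconnects the graph applies unambiguously and yields the stated behavior after short bookkeeping.
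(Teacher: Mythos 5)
There is a genuine gap in your second step: the identification ``$\QCo_{\beth}\cong C_{\beth}$'' is false, and with it the reduction of $\de^{2}=0$ to $\pa_{\Gr}^{2}=0$. The summand $\QCo_{\beth}$ of $\Gr\,\QCo$ is isomorphic not to $C_{\beth}$ but to the space of coinvariants $\big(C_{\beth}\big)_{\bbS_{n_0}\times\bbS_{r_0}}$, where $\bbS_{n_0}\times\bbS_{r_0}$ permutes the labels on the non-cut vertices and on the non-separating edges. In $C_{\beth}$ those labels are frozen: the coinvariants in \eqref{C-beth} are taken only with respect to $\bbS_{n}\times\bbS_{r}$ acting on cut vertices and separating edges, so $C_{\beth}$ is strictly larger than $\QCo_{\beth}$ whenever $\beth$ admits non-trivial relabelings (e.g.\ the graph of figure \ref{fig:beth}). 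Your argument therefore shows, at best, that the map induced by $\de$ on the quotient $\big(C_{\beth}\big)_{\bbS_{n_0}\times\bbS_{r_0}}$ squares to zero. That does not imply $\de^{2}=0$ on $C_{\beth}$ itself: $\de$ is $\bbS_{n_0}\times\bbS_{r_0}$-equivariant, $C_{\beth}$ splits into isotypic components all preserved by $\de$, and the coinvariants detect only the trivial one. (Compare the swap action of $\bbS_{2}$ on $\bbK^{2}$ and the equivariant map $(x,y)\mapsto(x-y,\,y-x)$, which is non-zero yet induces zero on coinvariants.) The statement is genuinely needed on $C_{\beth}$, not on its further quotient, because the homotopy $h$ of Proposition \ref{prop:C-beth} is only well defined on $C_{\beth}$ --- it uses the frozen edge order of $\beth$ to single out the special marked vertex --- so acyclicity must be established before passing to $\bbS_{n_0}\times\bbS_{r_0}$-coinvariants.

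The paper instead verifies $\de^{2}=0$ on $C_{\beth}$ by a direct cancellation. Expanding $\de^{2}(\G)$ as a double sum of insertions at cut vertices, the cross terms $(\G\circ_{i}\G_{\ed})\circ_{j}\G_{\ed}$ with $j\neq i$ and $j\neq i+1$ occur in pairs differing by the transposition of the labels of the two new separating edges, hence with opposite signs, and cancel. The remaining terms come from inserting twice at the same cut vertex; they are indexed by partitions of the attachment set $X$ at that vertex into three non-empty blocks, and for each such partition the six resulting configurations cancel in pairs in the $\bbS_{n}\times\bbS_{r}$-coinvariants (this is figure \ref{fig:six}). If you wish to retain a filtration-style argument, you would need to realize $C_{\beth}$ itself --- not its $\bbS_{n_0}\times\bbS_{r_0}$-coinvariants --- as the associated graded of a filtered complex whose differential is already known to square to zero; no such complex is supplied by the paper, and producing one is not obviously easier than the term-by-term computation. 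Your step (1), on the other hand, is sound and is exactly the observation the paper uses to define the filtration \eqref{eq:filt_sepedge}.
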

\begin{proof}
Let $\G \in \gra(\beth)^{r_0 + r}_{n_0 + n}$.

It is easy to see that the terms coming from linear combinations $(\G \circ_i \G_{\ed}) \circ_j \G_{\ed}$  for $j \neq i$ and
$j \neq i+1$ appear twice with opposite signs. So they cancel each other and it remains to 
consider contributions coming from $(\G \circ_i \G_{\ed}) \circ_{i} \G_{\ed}$ and $(\G \circ_i \G_{\ed}) \circ_{i+1} \G_{\ed}$.

Let us denote by $X = \{x_1, \dots, x_m\}$ the union of $\mmp^{-1}(i)$ and $\{\und{k_1}, \und{k_2}, \dots\}$, 
where $\und{k_1}, \und{k_2}, \dots$  are all the separating edges (if any) incident to $i$ in $\G$.
It is clear that, if $m \le 2$, then we do not have contributions coming from  
$(\G \circ_i \G_{\ed}) \circ_{i} \G_{\ed}$ and $(\G \circ_i \G_{\ed}) \circ_{i+1} \G_{\ed}$. 

If $m \ge 3$, then for every partition of $X$ into three (non-empty) subsets  $X_1$, $X_2$, $X_3$,  
we get $6$ terms coming 
from $(\G \circ_i \G_{\ed}) \circ_{i} \G_{\ed}$ and $(\G \circ_i \G_{\ed}) \circ_{i+1} \G_{\ed}$. 
These terms are all shown schematically in figure \ref{fig:six}. It is easy to see that the terms 
in the two columns cancel each other when we pass to coinvariants. 
\begin{figure}[htp]
\centering 
\begin{tikzpicture}[scale=0.8, >=stealth']
\tikzstyle{w}=[circle, draw, minimum size=3, inner sep=1.8]
\tikzstyle{b}=[circle, draw, fill, minimum size=3, inner sep=1]
\tikzstyle{gr}=[circle, draw, fill, color =gray, minimum size=4, inner sep=1]
%
%
\node [w] (v1) at (0,0) {{\small $X_1$}};
\node [w] (v2) at (-2,2.3) {{\small $X_2$}};
\node [w] (v3) at (2,2.3) {{\small $X_3$}};
\draw [dashed] (v2) edge (v1) edge (v3);
\draw (-2,1) node[anchor=center] {{\small $\und{r_0+r+1}$}};
\draw (0,2.8) node[anchor=center] {{\small $\und{r_0+r+2}$}};
\begin{scope}[shift={(0,-4.5)}]
\node [w] (v1) at (0,0) {{\small $X_1$}};
\node [w] (v2) at (-2,2.3) {{\small $X_2$}};
\node [w] (v3) at (2,2.3) {{\small $X_3$}};
\draw [dashed] (v3) edge (v1) edge (v2);
\draw (2,1) node[anchor=center] {{\small $\und{r_0+r+1}$}};
\draw (0,2.8) node[anchor=center] {{\small $\und{r_0+r+2}$}};
\end{scope}
\begin{scope}[shift={(0,-9)}]
\node [w] (v1) at (0,0) {{\small $X_1$}};
\node [w] (v2) at (-2,2.3) {{\small $X_2$}};
\node [w] (v3) at (2,2.3) {{\small $X_3$}};
\draw [dashed] (v1) edge (v2) edge (v3);
\draw (-2,1) node[anchor=center] {{\small $\und{r_0+r+1}$}};
\draw (2,1) node[anchor=center] {{\small $\und{r_0+r+2}$}};
\end{scope}
%
%
\begin{scope}[shift={(8,0)}]
\node [w] (v1) at (0,0) {{\small $X_1$}};
\node [w] (v2) at (-2,2.3) {{\small $X_2$}};
\node [w] (v3) at (2,2.3) {{\small $X_3$}};
\draw [dashed] (v2) edge (v1) edge (v3);
\draw (-2,1) node[anchor=center] {{\small $\und{r_0+r+2}$}};
\draw (0,2.8) node[anchor=center] {{\small $\und{r_0+r+1}$}};
\end{scope}
\begin{scope}[shift={(8,-4.5)}]
\node [w] (v1) at (0,0) {{\small $X_1$}};
\node [w] (v2) at (-2,2.3) {{\small $X_2$}};
\node [w] (v3) at (2,2.3) {{\small $X_3$}};
\draw [dashed] (v3) edge (v1) edge (v2);
\draw (2,1) node[anchor=center] {{\small $\und{r_0+r+2}$}};
\draw (0,2.8) node[anchor=center] {{\small $\und{r_0+r+1}$}};
\end{scope}
\begin{scope}[shift={(8,-9)}]
\node [w] (v1) at (0,0) {{\small $X_1$}};
\node [w] (v2) at (-2,2.3) {{\small $X_2$}};
\node [w] (v3) at (2,2.3) {{\small $X_3$}};
\draw [dashed] (v1) edge (v2) edge (v3);
\draw (-2,1) node[anchor=center] {{\small $\und{r_0+r+2}$}};
\draw (2,1) node[anchor=center] {{\small $\und{r_0+r+1}$}};
\end{scope}
\end{tikzpicture}
\caption{The decoration of a cut vertex $v$ by $X_t$ means that 
$X_t$ is the union of $\mmp^{-1}(v)$ and the set of {\bf non-separating}
edges incident to $v$} 
\label{fig:six}
\end{figure}
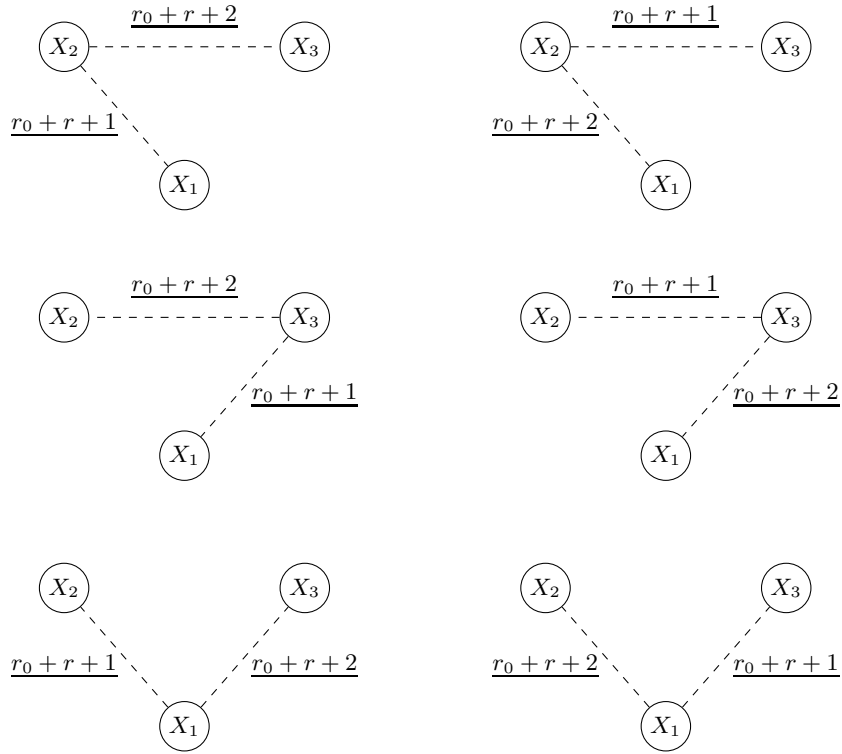

Thus the claim is proved. 
\end{proof}

Let $\beth$ be the graph satisfying Condition \ref{cond:beth} and let $n_0$ 
(resp. $r_0$) be the number of non-cut vertices (resp. edges) of $\beth$. 
The group $\bbS_{n_0} \times \bbS_{r_0}$ acts on 
the graded vector space $C_{\beth}$ by rearranging labels on 
non-cut vertices and non-separating edges. Moreover, the 
differential $\de$ commutes with this action. 

Let us now observe that both the space of coinvariants 
\begin{equation}
\label{C-beth-coinv}
\big( C_{\beth} \big)_{\bbS_{n_0} \times \bbS_{r_0}}
\end{equation}
and the space $\QCo_{\beth}$ from \eqref{GCo-beth} is the span of 
isomorphism classes of even connected graphs $\G$ satisfying 
these conditions 
\begin{itemize}

\item every vertex of $\G$ has valency $\ge 3$, 

\item $\G$ has at least one cut vertex, and 

\item $\G^{\red}$ is isomorphic to $\beth$. 

\end{itemize}
Moreover, such $\G$ has the same degree in \eqref{C-beth-coinv} and in $\QCo_{\beth}$.

Therefore the space of coinvariants \eqref{C-beth-coinv} is isomorphic to $\QCo_{\beth}$ as the 
graded vector space. 

It is not hard to see that the differential induced on \eqref{C-beth-coinv} by $\de$ corresponds 
to the differential $\pa_{\Gr}$ on $\QCo_{\beth}$. 
Thus, since the base field has characteristic zero and the group $\bbS_{n_0} \times \bbS_{r_0}$
is finite, Proposition \ref{prop:QC-beth-acyclic} follows directly from 
\begin{prop}
\label{prop:C-beth}
For every graph $\beth$ satisfying Condition \ref{cond:beth},  
the cochain complex $(C_{\beth}, \de)$ is acyclic. 
\end{prop}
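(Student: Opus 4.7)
The plan is to prove $(C_\beth,\de)$ is acyclic by factoring it as a tensor product of local complexes indexed by the cut vertices of $\beth$ and then constructing an explicit contracting homotopy on each local factor. First I would describe the combinatorial structure of $\G \in \gra(\beth)^{r_0+r}_{n_0+n}$: since $\G^{\red}\cong\beth$, the graph $\G$ is obtained from $\beth$ by replacing each cut vertex $v$ with a tree $T_v$ whose vertices inherit the half-edges formerly attached to $v$. Writing $m_v$ for the number of connected components of $\beth\setminus v$ and $k_1,\dots,k_{m_v}$ for the number of half-edges at $v$ going into each component, the assignment of half-edges to vertices of $T_v$ must factor through a function $\bar\phi_v\maps \{1,\dots,m_v\}\to V(T_v)$ so that the new edges are actually separating in $\G$. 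The hypothesis that $\beth$ has no separating edges (Condition \ref{cond:beth}) forces $k_j\ge 2$ for every $j$, a fact that is essential later.

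Because labels on cut vertices and separating edges of $\G$ are coinvariantly averaged by $\bbS_n\times\bbS_r$ and distinct cut vertices of $\beth$ contribute combinatorially independent trees, this description yields an isomorphism of cochain complexes
$$
C_\beth \;\cong\; \bigotimes_{v \in \mathrm{Cut}(\beth)} L_v ,
$$
where $L_v$ is the local complex generated by pairs $(T_v,\bar\phi_v)$ subject to the valency condition $\deg_{T_v}(w)+\sum_{j\,:\,\bar\phi_v(j)=w}k_j\ge 3$ for every $w\in V(T_v)$, and whose differential splits one vertex of $T_v$ into two vertices connected by a new edge while preserving the valency condition. A tree $T_v$ with $n_v$ vertices and $r_v = n_v - 1$ edges contributes $r_v + 2$ to the total degree, which matches the differential raising the edge count by one.

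The heart of the argument is to show each $L_v$ is acyclic. I would do this by constructing a contracting homotopy $s_v\maps L_v\to L_v[-1]$ using a distinguished component $C_1$ at $v$. Concretely, $s_v$ has two branches depending on the position of $C_1$ in $(T_v,\bar\phi_v)$: if $w_1=\bar\phi_v(1)$ is a ``leaf isolated on $C_1$,'' meaning $\bar\phi_v^{-1}(w_1)=\{1\}$ and $\deg_{T_v}(w_1)=1$, then $s_v$ contracts the unique edge at $w_1$; otherwise $s_v$ attaches a fresh vertex $w_1^{\mathrm{new}}$ to $w_1$ by a new edge and moves $C_1$ onto $w_1^{\mathrm{new}}$. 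The valency at the new leaf reads $1+k_1\ge 3$ thanks to $k_1\ge 2$, and the valency at $w_1$ after the transfer is safeguarded by $k_j\ge 2$ for the remaining components attached to $w_1$. With the correct sign conventions (inherited from the ordering of edges in the definition of $C_\beth$), the identity $\de_v s_v + s_v \de_v = \id_{L_v}$ is then verified by a direct case analysis on the location of $C_1$ in $T_v$. Given acyclicity of each $L_v$, K\"unneth's theorem (valid since $\bbK$ has characteristic zero) yields that $C_\beth = \bigotimes_v L_v$ is acyclic.

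The main obstacle will be the detailed case analysis and sign bookkeeping required to establish $\de_v s_v + s_v \de_v = \id$. A naive single-move homotopy typically introduces parasitic terms coming from contracting the ``wrong'' edge or expanding at the ``wrong'' vertex, and these need to be cancelled either by an additional correction to $s_v$ or by showing they vanish after passing to the coinvariants that define $L_v$. The hypothesis $k_j\ge 2$ coming from Condition \ref{cond:beth} is used repeatedly to guarantee that the moves produced by $s_v$ stay inside $L_v$; without this, extra degenerate configurations (precisely those that would force $\beth$ to have separating edges) would appear and the homotopy argument would break.
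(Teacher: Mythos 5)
Your overall architecture --- disassemble $\G$ into its separating forest and islands, isolate the tree of separating edges sitting over each cut vertex of $\beth$, and contract the resulting complex by a homotopy built from a distinguished marked vertex --- is the same as the paper's. (The paper does not write out the factorization $C_{\beth}\cong\bigotimes_v L_v$; it instead makes one canonical choice, the ``special'' marked vertex $c_0$ of the smallest island, and defines a single global homotopy $h$ that only touches the tree containing $\mmp(c_0)$. Since by K\"unneth it suffices for one tensor factor to be acyclic, your ``contract every factor'' version is a mild repackaging. The factorization itself is sound: the islands carry the fixed labels $\gray{1},\dots,\gray{n_0}$ and $\und{1},\dots,\und{r_0}$, so the cut vertices of $\beth$ and the elements of each decorating set are genuinely distinguishable, and the differential acts tree by tree.)

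The genuine problem is that your $s_v$ is not a chain homotopy, because it is not homogeneous of degree $-1$. Its first branch (contract the unique edge at $w_1$ when $C_1$ sits alone on a univalent external vertex) removes an edge; but its second branch (attach a fresh vertex to $w_1$ and move $C_1$ onto it) \emph{adds} an edge, so it has degree $+1$. Then $\de_v s_v + s_v\de_v$ acquires a component of degree $+2$ and cannot equal $\id_{L_v}$; no sign bookkeeping can repair this. The expansion you put into the second branch is in fact one of the summands of the differential $\de_v$ itself, not of the homotopy. The correct definition --- the one the paper uses in \eqref{h-dfn}--\eqref{h-dfn-0} --- is to set $s_v=0$ on that branch. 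The identity is then produced as follows: if $C_1$ is not isolated on a leaf, exactly one term of $\de_v(\G)$ lands in the isolated-leaf configuration (precisely the expansion you wrote, which survives the valency test because $1+k_1\ge 3$), and $s_v$ contracts it back to $\G$ while killing every other term; if $C_1$ is already isolated on a univalent vertex $i=\mmp(c_1)$, then $\de_i(\G)=0$ because no splitting of that vertex keeps all valencies $\ge 3$, so $\de_v s_v(\G)$ contributes $\G$ once and its remaining terms cancel against $s_v\de_v(\G)$. With that correction your argument closes and coincides with the paper's proof; without it, the central identity $\de_v s_v+s_v\de_v=\id$ is false as stated.
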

\begin{proof}
Let, as above, $n_0$ (resp. $r_0$) be the number of non-cut vertices (resp. non-separating edges) of $\beth$
and $n_1$ be the number of cut vertices of $\beth$.  
Clearly, the separating forest of every 
graph $\G \in \gra(\beth)^{r_0 + r}_{n_0 + n}$ (for some $r \ge 0$ and $n \ge n_1$) has 
exactly $n_1$ connected components. Every connected component of the separating forest 
of $\G$ is incident to $\ge 2$ islands. These islands may have common (necessarily cut) vertices 
but they do not share edges.

Let us order the islands of $\beth$ according to this rule: 
$$
\Ups_1 < \Ups_2 ~ \Leftrightarrow ~
\textrm{the smallest edge of } \Ups_1 \textrm{ precedes the smallest edge of } \Ups_2\,.
$$

Let $\Ups_0$ be the smallest island of $\beth$. If the island $\Ups_0$ has exactly one marked vertex then 
we call this marked vertex {\it special} and denote it by $c_0$. 

Let us now consider the case when $\Ups_0$ has several marked vertices.
For a marked vertex $c$ of $\Ups_0$, we denote  
by $X(c)$ the set of islands of $\beth$ which are different from $\Ups_0$ and 
attached to $c$ in $\beth$. Let us also denote by $\Ups(c)$ the smallest island 
from the set $X(c)$.
Clearly, for two distinct marked vertices $c_1$ and $c_2$ of $\Ups_0$ the sets 
$X(c_1)$ and $X(c_2)$ are disjoint (otherwise, the vertex of $\beth$ corresponding to $c_1$ 
is not a cut vertex). So $\Ups(c_1) \neq \Ups(c_2)$ if $c_1 \neq c_2$.

We call the marked vertex $c_0$ of $\Ups_0$ {\it special} if $\Ups(c_0)$ is the smallest island of the set 
$$
\{\Ups(c) ~|~ c ~ \textrm{is a marked vertex of } \Ups_0 \}. 
$$

Thus we conclude that, for every graph $\beth$ satisfying Condition \ref{cond:beth},  
the (unique) smallest island $\Ups_0$ of $\beth$ has exactly one special marked vertex. 
For example, if $\beth$ is the graph shown in figure \ref{fig:beth} (with the lexicographic order 
on the set of edges), then the pentagon is the smallest island and the unique marked vertex 
of this island is special. 

Let $c_0$ be the special marked vertex of the smallest island of $\beth$, 
$\G \in \gra(\beth)^{r_0 + r}_{n_0 + n}$, and $F$ be the separating forest of 
$\G$ decorated by marked vertices of the islands of $\beth$. If 
\begin{equation}
\label{c0-alone}
\mmp^{-1} (\mmp(c_0)) = \{ c_0 \}
\end{equation}
and the vertex $\mmp(c_0)$ has valency $1$ in $F$ then 
we denote by $\ti{\G}$ the graph in 
$\gra(\beth)^{r_0 + r-1}_{n_0 + n-1}$ which is obtained by contracting the 
(only) separating edge incident to $\mmp(c_0)$. If this separating edge is labeled by $\und{j}$
and it connects the (cut) vertices with labels $i_1 < i_2$ then we 
label vertices and edges of $\ti{\G}$ in the following way: 
\begin{itemize}

\item we shift the labels  $\und{j+1}, \dots, \und{r_0 + r}$ down by $1$,

\item the vertex obtained by contracting edge $\und{j}$ is labeled by $\gray{i_1}$, 

\item we shift the labels $\gray{i_2 +1}, \dots, \gray{n_0 + n}$ down by $1$. 

\end{itemize}

We set
\begin{equation}
\label{h-dfn}
h(\G)  : = (-1)^{j} \ti{\G},
\end{equation}
if condition \eqref{c0-alone} is satisfied and $\mmp(c_0)$ has valency $1$ in $F$. 
Otherwise, we set 
\begin{equation}
\label{h-dfn-0}
h(\G) : = 0. 
\end{equation}
It is easy to see that equations \eqref{h-dfn} and \eqref{h-dfn-0} define a degree $-1$ linear map 
\begin{equation}
\label{homotopy}
h : C_{\beth} \to C_{\beth}\,.
\end{equation}

We will conclude the proof of the proposition by showing that 
\begin{equation}
\label{de-h-h-de}
\de \circ h + h \circ \de = \id_{C_{\beth}}\,.
\end{equation}

The set $\gra(\beth)^{r_0 + r}_{n_0 + n}$ splits into the disjoint union of two subsets: 
\begin{itemize}

\item The first subset consists of graphs for which condition \eqref{c0-alone} is satisfied 
and the vertex $\mmp(c_0)$ is incident to exactly one separating edge. 

\item The second subset consists of graphs for which condition \eqref{c0-alone} is not satisfied or 
it is satisfied but the vertex  $\mmp(c_0)$ in the separating forest of $\G$ is not univalent. 

\end{itemize}
For example the graphs shown in figures \ref{fig:beth}, \ref{fig:G-beth} and \ref{fig:deG-beth}
belongs to the second subset while the graph shown in figure \ref{fig:deG-beth1} belongs to the 
first subset.

Let us assume that $\G$ belongs to the first subset of $\gra(\beth)^{r_0 + r}_{n_0 + n}$.
It is easy to see that $\de (h (\G))$ has exactly one term whose the underlying graph 
belongs to the first subset of $\gra(\beth)^{r_0 + r}_{n_0 + n}$. Moreover, this term 
coincides with $\G$. 

Let $i$ be the label of the cut vertex $\mmp(c_0)$ in $\G$. Since the linear 
combination $\G \circ_i \G_{\ed}$ does not involve graphs in which the additional 
edge is separating, $\de_i(\G)$ in \eqref{delta-dfn} is zero.  
Hence, 
$$
-h (\de(\G)) 
$$
coincides with the sum of terms in $\de (h (\G))$ whose underlying graphs 
belong to the second subset of $\gra(\beth)^{r_0 + r}_{n_0 + n}$. 
Thus $\de \circ h (\G) + h \circ \de(\G) = \G$.

Let us now assume that $\G$ belongs to the second subset of $\gra(\beth)^{r_0 + r}_{n_0 + n}$.
For such $\G$, we have $h (\G) =0$. 
It is easy to see that $\de(\G)$ has exactly one term $v$ whose underlying graph belongs to 
the first subset of $\gra(\beth)^{r_0 + r+1}_{n_0 + n+1}$. Moreover, $h(v) = \G$. 
Since all the remaining terms of $\de(\G)$ are annihilated by $h$,
$$
\de \circ h (\G) + h \circ \de(\G) = \G. 
$$

Proposition \ref{prop:C-beth} is proved. 
\end{proof}

As we explained above, Proposition \ref{prop:C-beth} implies Proposition \ref{prop:QC-beth-acyclic}. 
Thus the second part of Proposition \ref{prop:GCo-dfGCo3} is proved. 

\appendix

\section{Cohomology of auxiliary complexes}
\label{app:cohomol-T-V2}

Let us denote by $\cV_2$ the two dimensional vector space 
spanned by the degree 1 symbols $a$ and $b$ and denote by
$$
U : = (\und{T}(\cV_2),\mb_{1}),
\qquad
R : = (\und{T}(\cV_2),\mb_{2})
\quad
\textrm{and} 
\quad
P : = (T(\cV_2),\mb_{3})
$$
the cochain complexes with the following differentials:
\begin{equation}
\label{mb1-app}
\mb_{1}(v_1 v_2 \dots v_n) ~:=~ \sum_{i =1}^{n} (-1)^{i+1} \, v_1 \dots v_{i} (a+b) v_{i+1} \dots v_n\,,
\end{equation}
\begin{equation}
\label{mb2-app}
\mb_{2} (v_1 v_2 \dots v_n) ~: =~ 
- (a+b) v_1 \dots v_n + \mb_{1}(v_1 v_2 \dots v_n),
\end{equation}
\vspace{0.07cm}
$$
\mb_{3}(1) : = (a+b)/2, \qquad
\mb_3(a) = \mb_3(b) : = 0, 
$$
\begin{equation}
\label{mb3-app}
\mb_3(v_1 v_2 \dots v_n) ~:=~ \sum_{i =1}^{n-1} (-1)^{i+1} \, v_1 \dots v_{i} (a+b) v_{i+1} \dots v_n\,.
\end{equation}

To analyze these complexes, we consider the truncated tensor algebra $\und{T}(\cW_2)$, 
where $\cW_2$ is the vector space spanned by two symbols $a$ and $x$ of degree $1$. 
We identify $\und{T}(\cW_2)$ (resp. $T(\cW_2)$) with $\und{T}(\cV_2)$  (resp. $T(\cV_2)$)
via the obvious isomorphism which sends $a$ to $a$ and $x$ to $(a+b)$. 
The differentials on (the underlying vector spaces of)
$\und{T}(\cW_2)$ and $T(\cW_2)$ corresponding to 
\eqref{mb1-app},  \eqref{mb2-app}, \eqref{mb3-app} take the 
form\footnote{By abuse of notation, we use the same symbols for the corresponding 
differentials on $\und{T}(\cW_2)$ and  $T(\cW_2)$, respectively.}
$$
\mb_{1}(v_1 v_2 \dots v_n) := \sum_{i =1}^{n} (-1)^{i+1} \, v_1 \dots v_{i} \,x\, v_{i+1} \dots v_n\,,
$$
$$
\mb_{2} (v_1 v_2 \dots v_n) : =
- x \, v_1 \dots v_n + \mb_{1}(v_1 v_2 \dots v_n),
$$
$$
\mb_{3}(1) : = x/2, \qquad
\mb_3(a) = \mb_3(x) : = 0, 
\qquad 
\mb_3(v_1 v_2 \dots v_n) ~:=~ \sum_{i =1}^{n-1} (-1)^{i+1} \, v_1 \dots v_{i} \, x\, v_{i+1} \dots v_n\,.
$$

Let us prove the following statements about the 
cochain complexes $U$ and $R$:
\begin{claim} 
\label{cl:U-and-R}
The cochain complex $U$ is acyclic. As for $R$, we have
$$
H^{n} ( R ) ~ \cong ~ 
\begin{cases}
 \bbK \qquad {\rm if} ~~ n=1,  \\
  \bfzero \qquad {\rm otherwise}\,.
\end{cases}
$$
Moreover, $H^1(R)$ is spanned by the cohomology class of the cocycle $a+b$. 
\end{claim}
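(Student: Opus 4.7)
My plan is to construct a single explicit degree-$(-1)$ map $h$ on $\und{T}(\cW_2)$ (using the change of variables $x = a+b$ already set up above) that simultaneously serves as a contracting homotopy for $U$ and as a deformation retraction from $R$ onto the one-dimensional subcomplex $\bbK x$. The motivating picture is that both $\mb_1$ and $\mb_2$ act by inserting $x$ at various positions, so the natural candidate for $h$ is the partial inverse operation ``remove the rightmost $x$.''

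Concretely, I would set $h(v_1 v_2 \cdots v_n) := (-1)^n\, v_1 v_2 \cdots v_{n-1}$ when $v_n = x$ and $n \ge 2$, and $h(w) := 0$ in every other case. The key design choice is that the length-$1$ word $x$ is killed: the naive formula would send it to $-1 \notin \und{T}(\cW_2)$, and this deliberate defect is precisely what will realize the class in $H^1(R)$. For $\mb_1$ I would verify the identity $\mb_1 h + h \mb_1 = \id$ by a case split on $v_n$. The main case, $v_n = x$ with $n \ge 2$, works because the two insertion terms at positions $i = n-1$ and $i = n$ in $\mb_1(w)$ both produce $\pm v_1 \cdots v_{n-1} x x$ and cancel; then $h$ sees only the remaining $n-2$ terms in $\mb_1(w)$ and pairs them against the off-diagonal part of $\mb_1 h(w) = (-1)^n \mb_1(v_1 \cdots v_{n-1})$, while the $i = n-1$ contribution of the latter reconstitutes $w$. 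The case $v_n = a$ is easier: $h$ annihilates every term of $\mb_1(w)$ except the rightmost insertion $(-1)^{n+1} v_1 \cdots v_n x$, which $h$ returns to $w$. The length-$1$ words $a$ and $x$ are dispatched by a one-line check, and acyclicity of $U$ follows.

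Rerunning the same case analysis for $\mb_2(w) = \mb_1(w) - xw$, the additional term $-xw$ is either killed outright by $h$ (when $v_n = a$) or contributes a term that is cancelled symmetrically by a matching contribution in $\mb_2 h(w)$ (when $v_n = x$ and $n \ge 2$). Consequently $\mb_2 h + h \mb_2 = \id$ everywhere except on $w = x$, where $\mb_2(x) = -xx + xx = 0$ and $h(x) = 0$ jointly give $0$ rather than $x$. Thus $\mb_2 h + h \mb_2 = \id - \pi$ with $\pi$ the projection onto $\bbK x$; since $\bbK x \subset \und{T}(\cW_2)$ is a subcomplex with zero differential and $\pi$ retracts its inclusion, this gives $H^{\bullet}(R) \cong \bbK$ concentrated in degree $1$ with generator $x = a + b$. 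The only nontrivial obstacle I anticipate is the sign bookkeeping in the $v_n = x$ case for $\mb_1$; once the cancellation of the $i = n-1$ and $i = n$ insertion terms is observed, the rest of the computation is mechanical.
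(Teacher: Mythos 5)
Your proof is correct, but it proceeds by a genuinely different route than the paper's. The paper never writes down a homotopy: it splits $U$ and $R$ into the direct sum of subcomplexes $K_m$ graded by the number of occurrences of the symbol $a$, identifies $K_0$ as an explicitly acyclic complex and each $K_m$ ($m\ge 1$) as a tensor product of (shifts of) the elementary two-periodic complex $K^{\hs}$ of \eqref{K-heart}, and then invokes the K\"unneth theorem; the class of $a+b$ in $H^1(R)$ appears as the class of $x$ in $K_0$. You instead build a single global contracting homotopy $h$ (``delete the trailing $x$, with sign $(-1)^n$'') and verify $\mb_1 h + h\mb_1 = \id$ and $\mb_2 h + h\mb_2 = \id - \pi$ directly; I checked the sign bookkeeping in all cases (including the length-one words $a$ and $x$, and the cancellation of the $i=n-1$ and $i=n$ insertions against the extra term $-xw$) and it works. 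One point worth making explicit: the conclusion that $[a+b]\neq 0$ in $H^1(R)$ is guaranteed by your retraction ($H(p)\circ H(\iota)=\id$ on $H^{\bul}(\bbK x)$, or simply because $\und{T}(\cW_2)$ has no degree-zero part), so the argument is complete. What each approach buys: yours is more self-contained (no K\"unneth, no decomposition) and produces an explicit homotopy. The paper's decomposition, on the other hand, is designed for reuse --- the same building block $K^{\hs}$ and the same $a$-degree splitting are recycled in Appendix B to compute $H^{\bul}(P)$ and the cohomology of the cyclic-coinvariant complex $K_{\dia}$, where your homotopy would not descend to coinvariants because ``rightmost $x$'' is not invariant under the cyclic or $\bbS_2$ actions.
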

\begin{proof} The cochain complex $(\und{T}(\cW_2), \mb_1)$ splits into the direct sum 
of subcomplexes
\begin{equation}
\label{sum-deg-a}
\bigoplus_{m \ge 0} K_m\,,
\end{equation}
where $K_m$ is the subspace of  $\und{T}(\cW_2)$ spanned by monomials of degree $m$ in the symbol $a$.

Since $\mb_1(x^{2 t-1}) = x^{2t}$ and $\mb_1(x^{2t}) = 0$, the cochain 
complex $(K_0, \mb_1 )$ is acyclic. 

To analyze $(K_m, \mb_1)$ for $m \ge 1$, we denote by $K^{\hs}$ the following cochain complex 
\begin{equation}
\label{K-heart}
\dots \, \to \bfzero \to \bfzero  \to \bbK \stackrel{- x \cdot}{ \longrightarrow} \bbK x \stackrel{0}{ \longrightarrow}
\bbK x^2  \stackrel{-x \cdot}{ \longrightarrow} \bbK x^3  \stackrel{0}{ \longrightarrow} 
\bbK x^4  \stackrel{- x \cdot}{ \longrightarrow}  \dots
\end{equation}

It is easy to see that (for every $m \ge 1$), $K_m$ splits into the direct sum 
$$
K_{m, x} \oplus K_{m, a}\,, 
$$
where $K_{m, x}$ (resp. $K_{m, a}$) is the subspace of $K_m$ spanned by monomials 
of the form $x \dots $ (resp. $a \dots $). Moreover, the cochain complex $K_{m,x}$ is isomorphic to 
$K_0 \otimes (\bs K^{\hs})^{\otimes \, m}$ and the cochain complex $K_{m,a}$ is isomorphic to 
$(\bs K^{\hs})^{\otimes \, m}$. 

Since the cochain complexes $K_0$ and $K^{\hs}$ are acyclic, $U$ is also acyclic. 

Let us now consider the cochain complex $R \cong \big(\und{T}(\cW_2), \mb_2 \big)$. 

Just as $U$, the cochain complex  $\big(\und{T}(\cW_2), \mb_2 \big)$ splits into the direct sum
\eqref{sum-deg-a} of subcomplexes $(K_m, \mb_2)$ for $m \ge 0$. 

Since $\mb_2(x^{2n-1}) = 0$ and $\mb_2(x^{2n}) = - x^{2n+1}$ for every positive integer $n$, 
$H^{m}(K_0, \mb_2) = \bfzero$ for every $m \neq 1$, 
$H^1(K_0, \mb_2) \cong \bbK$ and it is spanned by the cohomology class of $x$. 

It is easy to see that, for every $m \ge 1$, the cochain complex $K_m$ is isomorphic to the tensor product  
$$
K^{\hs} \otimes (\bs\, K^{\hs})^{\otimes \, m}\,,
$$
where $K^{\hs}$ is the cochain complex defined in \eqref{K-heart}. 

Since the cochain complex $K^{\hs}$ is acyclic, so is $K_m$ for every $m \ge 1$. 
 
The claim is proved. 
\end{proof}

The following claim takes care of the cochain complex $(T(\cV_2),\mb_{3})$:
\begin{claim}  
\label{cl:P}
The cochain complex $P \cong  \big(T(\cW_2), \mb_3 \big)$ has the following 
cohomology
$$
H^{n} ( P) ~ \cong ~ 
\begin{cases}
\bbK \qquad {\rm if} ~~ n=1,  \\
\bfzero \qquad {\rm otherwise}.
\end{cases}
$$
The cocycles $a$ and $-b$ are cohomologous and $H^1(P)$ is spanned by the cohomology class of $a$. 
\end{claim}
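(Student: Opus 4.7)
The strategy is to split $P$ by the number of occurrences of $a$. Since $\mb_3$ preserves the ``$a$-degree'' of a monomial, one has a direct-sum decomposition $P = \bigoplus_{m \geq 0} K_m$, where $K_m \subset P$ is spanned by the monomials containing exactly $m$ copies of $a$. The claim reduces to three subclaims: $(i)$ $K_0$ is acyclic, $(ii)$ $H^\bullet(K_1) = \bbK \cdot [a]$ is concentrated in degree $1$, and $(iii)$ $K_m$ is acyclic for every $m \geq 2$. Once these are in hand, $H^\bullet(P) = \bbK \cdot [a]$ follows at once, and the relation $[a] = -[b]$ is immediate because $x = a+b = 2\,\mb_3(1)$ is a coboundary in $P$.

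Parts $(i)$ and $(ii)$ are direct computations. For $(i)$, one has $\mb_3(1) = x/2$, $\mb_3(x^{2k}) = x^{2k+1}$ for $k \geq 1$, and $\mb_3(x^n) = 0$ for odd $n$, so the basis pairs off as $(1,x)$ and $(x^{2k}, x^{2k+1})$ and every cocycle is a coboundary. For $(ii)$, parametrizing basis elements by $(i,j) \in \bbZ_{\geq 0}^{\,2}$ via $x^i a x^j$, a direct calculation gives
\[
\mb_3(x^i a x^j) = [i\text{ odd}] \cdot x^{i+1} a x^j + (-1)^i\,[j\text{ odd}] \cdot x^i a x^{j+1},
\]
which gives $K_1$ the structure of a bicomplex. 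Running the spectral sequence first in the $j$-direction pairs $(x^i a x^{2k+1}, x^i a x^{2k+2})$ and leaves $\bbK \cdot x^i a$ as the $E_1$-entry for each $i$; then the induced differential in the $i$-direction pairs $(x^{2k+1} a, x^{2k+2} a)$ and leaves only $\bbK \cdot a$ in total degree $1$, after which the spectral sequence collapses.

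The main technical obstacle is $(iii)$. Parametrize $K_m$ by tuples $(i_0, \dots, i_m) \in \bbZ_{\geq 0}^{\,m+1}$ and compute $\mb_3 = \sum_{j=0}^{m} c_j\, \mathsf{e}_j$, where $\mathsf{e}_j$ raises $i_j$ by $1$ and $c_j$ vanishes unless $j=0$ and $i_0$ is odd, or $1 \leq j \leq m-1$ and $i_j$ is even, or $j=m$ and $i_m$ is odd; in each nonzero case $c_j = (-1)^{L_{j-1}}$ with $L_{j-1} = \sum_{s<j} i_s + (j-1)$. I would prove acyclicity by an iterated spectral sequence, filtering in turn by $i_m$, then $i_{m-1}$, and so on. The first filtration's $d_0$ pairs $(i_m = 2k+1, i_m = 2k+2)$ and leaves the slice $\{i_m = 0\}$. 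The crucial place where $m \geq 2$ enters is the next step: because for the \emph{interior} index $m-1$ the nonzero-coefficient condition reads ``$i_{m-1}$ even'', the induced differential pairs $(i_{m-1} = 2k, i_{m-1} = 2k+1)$ \emph{completely}, killing every surviving class. Iterating down to the innermost bicomplex on $(i_0, i_1)$ where $\mathsf{e}_0$ requires $i_0$ odd but $\mathsf{e}_1$ requires $i_1$ even, one verifies directly that such a bicomplex is acyclic; consequently the whole $E_2$ page of the outer spectral sequence vanishes and $H^\bullet(K_m) = 0$. The principal technical nuisance will be the bookkeeping of the signs $(-1)^{L_{j-1}}$ through the nested spectral sequence pages, but since these signs are all nonzero they do not affect the combinatorial cancellations that drive the argument.
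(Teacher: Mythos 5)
Your proposal is correct and follows essentially the same route as the paper: split $P\cong (T(\cW_2),\mb_3)$ by the number of occurrences of $a$, observe that each block of $x$'s contributes an elementary two-term-periodic complex (the interior blocks giving acyclic $K^{\hs}$-type factors, the outer blocks each contributing a single class at length $0$), and conclude by Künneth/tensor-product reasoning, with $[a]=-[b]$ coming from $a+b=2\,\mb_3(1)$. Your iterated spectral sequence is just a more explicit packaging of the paper's (very terse) identification of $(K^{-}_m,\mb_3)$ with tensor products involving $\bs K^{\hs}$, and your careful treatment of the $m=1$ bicomplex fills in a step the paper leaves implicit.
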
  
\begin{proof} Just as for $U$ and $R$ the cochain complex $P$ splits into the 
direct sum of subcomplexes
\begin{equation}
\label{P-sum}
\bigoplus_{m \ge 0} K^{-}_{m},
\end{equation}
where $K^{-}_m$ is the subspace of $T(\cW_2)$ spanned by  monomials of degree $m$ in the symbol $a$.

Since $\mb_3 (1) = x/2$, $\mb_3(x^{2 t-1}) =0$ and $\mb_3(x^{2 t}) = x^{2t+1}$ for every $t \ge 1$, the 
complex $(K^{-}_{0}, \mb_3)$ is acyclic.  

As for $m \ge 2$, it is easy to see that, up to the shift of degree, $(K^{-}_m, \mb_3)$ is isomorphic to the cochain complex 
$$
(\bs\, K^{\hs})^{\otimes \, m-1}
$$
where $K^{\hs}$ is the cochain complex defined in \eqref{K-heart}. 

Finally, $K^{-}_1$ is the following cochain complex with the zero differential 
$$
 \dots \to \bfzero \to \bbK a \to \bfzero \to \bfzero \to \dots
$$

The desired statements follow.
\end{proof}

\section{The subcomplexes of polygons and path graphs}
\label{app:polygons-paths}
Let us recall that $\dfGCo_{\conn, \dia}$ is the subcomplex of $\dfGCo_{\conn}$ spanned by (even) connected graphs with 
all vertices having valency $2$ (i.e. $\G_{\lp}$ and various polygons) and $\dfGCo_{\conn, -}$ is the subcomplex of $\dfGCo_{\conn}$ 
spanned by path graphs and the graph $\G_{\bul} \in \dgra_1^0$. 
 
In this appendix, we prove Propositions \ref{prop:dfGCo-diamond} and \ref{prop:dfGCo-path} which 
take care of the complexes  $\dfGCo_{\conn, \dia}$ and $\dfGCo_{\conn, -}$.

First, we observe that the degree $n$ term $U^n$ of the cochain complex $U \cong \big( \und{T}(\cW_2), \mb_1 \big)$ is equipped 
with the obvious action of the cyclic group $G_n$ of order $n$. The generator $g_n$ of this group acts as
\begin{equation}
\label{cyclic-acts}
g_n (v_1, v_2,\dots, v_n) = (-1)^{n-1} (v_2, v_3,\dots, v_n, v_1). 
\end{equation}

Moreover, for every $X \in U^n$, the vector $\mb_1 (X - g_n (X))$ belongs 
to the subspace spanned by vectors of the form 
$$
Y - g_{n+1} (Y), \qquad Y \in U^{n+1}\,. 
$$
Hence $\mb_1$ induces a differential on the cochain complex 
\begin{equation}
\label{coinvar-cyclic}
\bfzero \to U^1_{G_1} \to U^2_{G_2} \to \dots \to U^n_{G_n} \to \dots\,. 
\end{equation}
By abuse of notation, we will use the same symbol $\mb_1$ for the differential on the cochain 
complex \eqref{coinvar-cyclic}. We denote by the resulting cochain complex by $K_{\dia}$. 

Just as $U$, the cochain complex $K_{\dia}$ splits into the direct 
sum of subcomplexes 
$$
K_{\dia}  \cong \bigoplus_{r \ge 0} K_{\dia, r}\,,
$$ 
where $K_{\dia, r}$ is spanned by (images of) monomials in which $a$ appears exactly $r$ times. 

It is easy to see that, for every $r \ge 1$, $K_{\dia, r}$ is obtained from the cochain complex 
$\big( \bs\, K^{\hs} \big)^{\otimes \, r}$
by taking coinvariants with respect to the action of the cyclic group $G_r$. 
Here, $K^{\hs}$ is the cochain complex defined in \eqref{K-heart}. 

Thus, since our base field has characteristic zero and $K^{\hs}$ is acyclic, 
the K\"unneth theorem implies that  $K_{\dia, r}$ is acyclic 
for every $r \ge 1$. 

It is clear that $x^n = 0$ in $\big( T^{n}(\cW_2) \big)_{G_n}$ for every even (positive) integer $n$ and 
$x^n \neq 0$ in  $\big( T^{n}(\cW_2) \big)_{G_n}$ for every odd (positive) integer $n$. Therefore 
$K_{\dia, 0}$ is the following cochain complex with the zero differential 
$$
\cdots  \to \bfzero \to \bfzero \to \bfzero \to \bbK x \to \bfzero \to \bbK x^3 \to \bfzero \to  \bbK x^5 \to \bfzero  \to \cdots\,.
$$

Thus the cochain complex \eqref{coinvar-cyclic} has the following cohomology: 
\begin{equation}
\label{H-K-dia}
H^{m} ( K_{\dia} )  \cong 
\begin{cases}
\bbK  \qquad \textrm{if} ~~ m \textrm{~~is positive and odd}, \\
\bfzero \qquad \textrm{otherwise}.
\end{cases}
\end{equation}
Moreover, $H^{2n+1} ( K_{\dia} )$ (for $n \ge 0$) is spanned by the cohomology class represented by $(a+b)^{2n+1}$. 

\subsection{The end of the proof of Proposition \ref{prop:dfGCo-diamond}}
Let us recall (see \eqref{S2-action}) that the cochain complex  $(U, \mb_1)$ 
is equipped with an action of $\bbS_2$. It is easy to see that this action descends to 
the cochain complex $K_{\dia}$ (see \eqref{coinvar-cyclic}). 

To every monomial $X = v_1 v_2 \dots v_n$ in $U$ we assign the cycle graph $\G_X \in \dgra_n^n$ which 
is obtained as follows: if $v_i = a$ and $i \le n-1$ then the $i$-th edge originates from vertex $i$ and terminates at 
vertex $i+1$; if $v_i = b$ and $i \le n-1$ then the $i$-th edge originates from vertex $i+1$ and terminates at vertex $i$; 
finally, the $n$-th edges connects vertex $n$ to vertex $1$; it originates from vertex $n$ (resp. $1$) if $v_n = a$ (resp. $v_n = b$). 

It is clear that the assignment $X \mapsto \G_X$ extends to the surjective map of cochain complexes 
\begin{equation}
\label{from-K-dia}
K_{\dia}  \to  \dfGCo_{\conn, \dia}\,.
\end{equation}
Moreover a vector $Y \in K_{\dia}$ belongs to the kernel of this map if and only if 
$Y$ belongs to the span of vectors of the form $X - \si(X), $
where $X \in K_{\dia}$ and $\si$ is the only non-trivial element of $\bbS_2$. 

Thus \eqref{from-K-dia} induces an isomorphism of cochain complexes 
$\big( K_{\dia} \big)_{\bbS_2} \cong  \dfGCo_{\conn, \dia}$ and 
Proposition \ref{prop:dfGCo-diamond} follows from \eqref{H-K-dia} and 
the fact that the image of $(a+b)^{2n+1}$ in $\big( K_{\dia} \big)_{\bbS_2}$ is
non-zero if and only if $n$ is divisible by $2$.  \qed

\subsection{The subcomplex $\dfGCo_{\conn, -}$ is indeed acyclic}
\label{app:paths}

To every monomial $X  = v_1v_2 \dots v_n \in \cV_2^{\otimes\, n}$\,, we assign the path graph 
$\G^{-}_X \in \dgra_{n+1}^n$ by declaring that, if $v_i = a$ (resp. $v_i = b$), then edge $\und{i}$ 
originates at vertex $\gray{i}$ (resp. $\gray{i+1}$) and terminates at vertex $\gray{i+1}$ 
(resp. $\gray{i}$). Setting, 
$$
\psi(X) : = \G^{-}_X\,, \qquad \psi(1) : = \G_{\bul}
$$  
we get a surjective map of cochain complexes $\psi : P \to \dfGCo_{\conn, -}$.  

It is easy to see that a vector $Y \in P$ belongs to the kernel of $\psi$ if and only if 
$Y$ belongs to the span of vectors of the form $X - \si (X),$
where $X$ is a monomial in $P$, $\si = (1,2) \in \bbS_2$ and the action 
of $\bbS_2$ on $P$ is defined by  \eqref{S2-action}. 

Due to Claim \ref{cl:P}, $H^{\bul}(P)$ is spanned by the cohomology class of $a$. 
Since, in the space of coinvariants $P_{\bbS_2}$, we have $a = (a+b)/2$, 
the cochain complex $P_{\bbS_2}$ is acyclic. Thus the cochain complex 
$\dfGCo_{\conn, -}$ is also acyclic. \qed

~\\

\noindent\textsc{Department of Mathematics,
Temple University, \\
Wachman Hall Rm. 638\\
1805 N. Broad St.,\\
Philadelphia PA, 19122 USA \\
\emph{E-mail address:} {\bf vald@temple.edu}}

~\\

\noindent\textsc{Department of Mathematics and Statistics\\
University of Nevada, Reno\\
1664 N. Virginia Street\\
Reno, NV 89557-0084 USA\\
\emph{E-mail address:} {\bf chrisrogers@unr.edu, chris.rogers.math@gmail.com}}

~\\


\begin{thebibliography}{99}

\bibitem{AT} A. Alekseev and C. Torossian,  The Kashiwara-Vergne conjecture and Drinfeld's associators, 
Ann. of Math. (2) {\bf 175}, 2 (2012) 415--463;
arXiv:0802.4300.

\bibitem{A-Turchin} G. Arone and V. Turchin, Graph-complexes computing the rational homotopy of 
high dimensional analogues of spaces of long knots, Ann. Inst. Fourier (Grenoble) {\bf 65}, 1 (2015) 1--62; 
arXiv:1108.1001.  

\bibitem{Bar-Natan-GC} D. Bar-Natan and B. McKay,
Graph Cohomology -- An Overview and Some Computations,
available at \url{https://www.math.toronto.edu/~drorbn/papers/GCOC/GCOC.ps}

\bibitem{Cattaneo} A.S. Cattaneo, P. Cotta-Ramusino, and R. Longoni, 
Configuration spaces and Vassiliev classes in any dimension, 
Algebr. Geom. Topol. {\bf 2} (2002) 949--1000. 

\bibitem{C-Vogtmann} J. Conant, F. Gerlits and K. Vogtmann, Cut vertices in commutative graphs, 
Q. J. Math. {\bf 56}, 3 (2005) 321--336. 

\bibitem{hairy-stuff} J. Conant, M. Kassabov and K. Vogtmann, 
Higher hairy graph homology, Geom. Dedicata {\bf 176} (2015) 345--374.

%

\bibitem{stable} V.A. Dolgushev, Stable formality quasi-isomorphisms for Hochschild cochains,  arXiv:1109.6031.

\bibitem{slides} V.A. Dolgushev, The Intricate Maze of Graph Complexes, slides are available at 
\url{https://math.temple.edu/~vald/GCtalk.pdf}

\bibitem{notes} V. A. Dolgushev and C. L. Rogers, 
Notes on Algebraic Operads, Graph Complexes, and Willwacher's 
Construction, in {\it Mathematical aspects of quantization,} 25--145, 
Contemp. Math., {\bf 583}, Amer. Math. Soc., Providence, RI, 2012;
arXiv:1202.2937.

\bibitem{DeligneTw} V.A. Dolgushev and T. H. Willwacher, 
Operadic Twisting -- with an application to Deligne's conjecture,
J. Pure Appl. Algebra, {\bf 219}, 5 (2015) 1349--1428; 
arXiv:1207.2180.

\bibitem{Drinfeld} V.G. Drinfeld, On quasitriangular quasi-Hopf algebras and
on a group that is closely connected with ${\rm Gal}(\overline{\bbQ}/\bbQ)$.
(Russian) Algebra i Analiz {\bf 2}, 4 (1990) 149--181;
translation in Leningrad Math. J. {\bf 2}, 4 (1991) 829--860.

\bibitem{Ham} A. Hamilton, 
Classes on the moduli space of Riemann surfaces through a noncommutative Batalin-Vilkovisky 
formalism, Adv. Math. {\bf 243} (2013) 67--101. 

\bibitem{Ham-Lazar} A. Hamilton and A. Lazarev,
Characteristic classes of $A_{\infty}$-algebras, 
J. Homotopy Relat. Struct. {\bf 3}, 1 (2008) 65--111; 
arXiv:0801.0904. 

\bibitem{Ham-Lazar1} A. Hamilton and A. Lazarev,
Graph cohomology classes in the Batalin-Vilkovisky formalism,
J. Geom. Phys. {\bf 59}, 5 (2009) 555--575;  arXiv:math/0701825.



\bibitem{Igusa} K. Igusa, Graph cohomology and Kontsevich cycles, 
Topology {\bf 43}, 6 (2004) 1469--1510. 

\bibitem{KWZ} A. Khoroshkin, T. Willwacher, and M. Zivkovic,
Differentials on graph complexes, Adv. Math. {\bf 307} (2017) 1184--1214; 
arXiv:1411.2369. 

\bibitem{KWZ11}  A. Khoroshkin, T. Willwacher, and M. Zivkovic,
Differentials on graph complexes II: hairy graphs, 
Lett. Math. Phys. {\bf 107}, 10 (2017) 1781--1797; arXiv:1508.01281. 


\bibitem{K} M. Kontsevich, Deformation quantization of
Poisson manifolds, Lett. Math. Phys., {\bf 66} (2003) 157-216; 
q-alg/9709040.

\bibitem{K-conj} M. Kontsevich, Formality conjecture,
{\it Deformation theory and symplectic geometry} (Ascona, 1996),
139--156, Math. Phys. Stud., 20, Kluwer Acad. Publ.,
Dordrecht, 1997.

\bibitem{K-noncom-symp} M. Kontsevich, 
Formal (non)commutative symplectic geometry, 
{\it The Gel'fand Mathematical Seminars,} 1990--1992, 
173--187, Birkh\"auser Boston, Boston, MA, 1993. 

\bibitem{K-M-Volic} R. Koytcheff, B.A. Munson, I. Voli\'c,
Configuration space integrals and the cohomology of the space of homotopy string links,
J. Knot Theory Ramifications {\bf 22}, 11 (2013) 73 pp.


\bibitem{LV-book} J.-L. Loday and B. Vallette,
Algebraic operads, {\it Grundlehren der Mathematischen Wissenschaften}, 
{\bf 346}, Springer, Heidelberg, 2012.


%


\bibitem{MV-nado} S. Merkulov and B. Vallette, 
Deformation theory of representations of prop(erad)s. I, 
J. Reine Angew. Math. {\bf 634} (2009) 51--106;
arXiv:0707.0889. 

\bibitem{Sergei-Thomas} S. Merkulov and T. Willwacher, 
Props of ribbon graphs, involutive Lie bialgebras and moduli spaces of curves, 
arXiv:1511.07808.

\bibitem{Sinha} D.P. Sinha, Operads and knot spaces, J. Amer. Math. Soc. {\bf 19}, 2 (2006) 461--486. 



\bibitem{Thomas-Ger-knots} T. Willwacher, Deformation quantization and the Gerstenhaber structure on 
the homology of knot spaces, arXiv:1506.07078.

\bibitem{Thomas-slides} T. Willwacher, Graph complexes, slides are available at
\url{https://people.math.ethz.ch/~wilthoma/docs/jobslides.pdf}

\bibitem{Thomas} T. Willwacher, M. Kontsevich's graph complex and the Grothendieck-Teichm\"uller Lie algebra, 
Invent. Math. {\bf 200}, 3 (2015) 671--760; arXiv:1009.1654.

 
\bibitem{WZ} T. Willwacher and M. \v{Z}ivkovi\'c,
Multiple edges in M. Kontsevich's graph complexes and computations of the dimensions and Euler characteristics, 
Adv. Math. {\bf 272} (2015) 553--578; arXiv:1401.4974. 
    
\end{thebibliography}
\end{document}